\documentclass[a4paper,12pt]{article}

\textwidth=160mm
\textheight=235mm
\setlength{\topmargin}{-10mm}
\setlength{\evensidemargin}{0mm}
\setlength{\oddsidemargin}{0mm}

%
%


\usepackage{mymacros}

\newcommand{\psihat}{{\hat{\psi}}}
\newcommand{\Gammaa}{\Gamma_{n+2}}
\newcommand{\Gammab}{\Gamma}
\newcommand{\Fa}{\scF_{n+2}}

\newcommand{\Fqb}{\scF_q}
\newcommand{\Sa}{\scS_{n+2}}
\newcommand{\Sb}{\scS}

\newcommand{\Sqb}{\scS_q}

\newcommand{\fo}{\Lambda_\bN / q^2 \Lambda_{\bN}}

\newcommand{\ofo}{\otimes_{\Lambda_\bN} \fo}

\newcommand{\PSL}{\mathop{PSL}\nolimits}

\newcommand{\Sing}{\operatorname{Sing}}

\renewcommand{\Fuk}{\scF}
\renewcommand{\dirFuk}{\Fuk^\to}

%
%


\newtheorem{thm}{Theorem}[section]
\newtheorem{prop}[thm]{Proposition}

\newtheorem{lem}[thm]{Lemma}
\newtheorem{ass}[thm]{Assumption}

\theoremstyle{definition}

\newcommand{\ii}{\sqrt{-1}}
\renewcommand{\red}{\mathrm{red}}


%
%

\newcommand{\dirC}{C^{\to}}

\newcommand{\Ls}{L^\sharp}
\newcommand{\Ld}{L^\Diamond}

%
%

\title{Homological mirror symmetry\\
for the quintic 3-fold}
\author{Yuichi Nohara and Kazushi Ueda}
\date{}
\pagestyle{plain}

%
%

\begin{document}

\maketitle

\begin{abstract}
We prove homological mirror symmetry
for the quintic Calabi-Yau 3-fold.
The proof follows that
for the quartic surface
by Seidel \cite{Seidel_K3} closely, and
uses a result of
Sheridan \cite{Sheridan_pants}.
In contrast to Sheridan's approach
\cite{Sheridan_CY},
our proof gives the compatibility
of homological mirror symmetry
for the projective space
and its Calabi-Yau hypersurface.
\end{abstract}

\section{Introduction}
 \label{sc:introduction}

Ever since the proposal by Kontsevich
\cite{Kontsevich_HAMS},
homological mirror symmetry has been proved
for elliptic curves \cite{Polishchuk-Zaslow,
Polishchuk_MFPEC,
Seidel_ASNT},
Abelian surfaces
\cite{Fukaya_MSAVMTF,
Kontsevich-Soibelman_HMSTF,
Abouzaid-Smith}
and quartic surfaces
\cite{Seidel_K3}.
It has also been extended
to other contexts
such as Fano varieties
\cite{Kontsevich_ENS98},
varieties of general type
\cite{Katzarkov_BGHMS},
and singularities
\cite{Takahashi_WPL},
and various evidences
have been accumulated in each cases.

The most part of the proof of homological mirror symmetry for the quartic surface
by Seidel \cite{Seidel_K3} works
in any dimensions.
Combined with the results of Sheridan \cite{Sheridan_pants},
an expert reader will observe that
one can prove homological mirror symmetry
for the quintic 3-fold
if one can show that
\begin{itemize}
 \item
the large complex structure limit monodromy
of the pencil of quintic Calabi-Yau 3-folds is {\em negative}
in the sense of Seidel \cite[Definition 7.1]{Seidel_K3}, and
 \item
the vanishing cycles of the pencil of quintic Calabi-Yau 3-folds
are isomorphic in the Fukaya category
to Lagrangian spheres
constructed by Sheridan \cite{Sheridan_pants}.
\end{itemize}
We prove these statements, and
obtain the following:

\begin{theorem} \label{th:hms}
Let $X_0$ be a smooth quintic Calabi-Yau 3-fold in $\bP_\bC^4$
and $Z_q^*$ be the mirror family.
Then there is a continuous automorphism
$\psi \in \End(\Lambda_\bN)^\times$
and an equivalence
\begin{equation} \label{eq:hms}
 D^\pi \Fuk(X_0) \cong \psihat^* D^b \coh Z_q^*
\end{equation}
of triangulated categories over $\Lambda_\bQ$.
\end{theorem}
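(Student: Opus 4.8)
The plan is to rerun Seidel's proof of homological mirror symmetry for the quartic surface \cite{Seidel_K3} with $\bP^3$ replaced by $\bP^4$; the two bullet points above are the only inputs that do not carry over formally, and we establish them in the sections that follow. Granting them, the argument runs as follows. Since any two smooth quintic $3$-folds in $\bP^4_\bC$ are symplectically isotopic, we may take $X_0$ to be a fibre of an anticanonical Lefschetz pencil on $\bP^4_\bC$ whose distinguished large complex structure limit member is the union of the five coordinate hyperplanes; blowing up the base locus produces a symplectic Lefschetz fibration over $\bP^1$ whose monodromy around the large complex structure point is the map in the first bullet. Seidel's generation results for Lefschetz pencils then show that $D^\pi \Fuk(X_0)$ is split-generated by a distinguished basis of vanishing cycles, and that its Floer-theoretic $A_\infty$-structure --- hence the category itself --- is reconstructed from the directed Fukaya category $\dirFuk$ of the pencil by passing to twisted complexes and then to a suitable triangulated quotient. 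It is at this reconstruction step that \emph{negativity} of the large complex structure limit monodromy is needed: it forces the relevant index gradings to be $\bZ$-valued and the degeneration data to be compatible with the Calabi--Yau structure, as in \cite[\S 7]{Seidel_K3}.

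On the B-side, $Z_q^*$ is a crepant resolution of a Greene--Plesser quotient of the Dwork-pencil quintic $x_0^5 + \dots + x_4^5 + t\,x_0 x_1 x_2 x_3 x_4$, with $t$ a function of $q$ determined by the mirror map. By Orlov's theorem identifying the bounded derived category of a Calabi--Yau hypersurface with the graded category of singularities of the associated affine cone, together with the computation of this singularity category in terms of the $A_\infty$-deformation of the Beilinson algebra of $\bP^4$ induced by the quintic potential, $D^b \coh Z_q^*$ is equivalent to the triangulated category obtained --- by the construction parallel to the one on the A-side --- from a directed $A_\infty$-algebra $\scB$ whose structure constants depend on $q$ only through $t$. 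This is the step realizing the compatibility of homological mirror symmetry for $\bP^4$ with that of its Calabi--Yau hypersurface, and it is formally identical to its two-dimensional counterpart in \cite{Seidel_K3}.

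It then remains to produce a quasi-equivalence $\dirFuk \simeq \scB$ of directed $A_\infty$-algebras over $\Lambda_\bN$, which is where the second bullet enters. Once the vanishing cycles of the pencil are identified in the Fukaya category with Sheridan's Lagrangian spheres \cite{Sheridan_pants}, his pair-of-pants computation delivers $\dirFuk$ explicitly, together with the dependence of its products on the Novikov parameter; comparing with $\scB$ forces the structure constants to agree after a continuous reparametrization $q \mapsto \psi(q)$ of $\Lambda_\bN$ --- the mirror map being pinned down only up to such an automorphism, which is the origin of $\psihat$ in \eqref{eq:hms} --- and gives $\dirFuk \simeq \scB$. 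Transporting the reconstruction of $D^\pi \Fuk(X_0)$ from $\dirFuk$, and that of $D^b \coh Z_q^*$ from $\scB$, across this quasi-equivalence then yields \eqref{eq:hms}.

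The main obstacle is precisely this comparison $\dirFuk \simeq \scB$, and as $A_\infty$-algebras rather than merely on cohomology: one must match every higher product --- a signed count of holomorphic polygons bounded by the vanishing cycles, weighted by Novikov area --- with the corresponding product in the Beilinson algebra of $\bP^4$ deformed by the quintic potential, with compatible $\bZ$-gradings, and one must follow the $q$-dependence closely enough to extract the automorphism $\psi$. Sheridan's identification reduces the polygon counts to his explicit local models, and the negativity input is what guarantees that the two $A_\infty$-algebras live over the same graded coefficient ring, so that the comparison is even meaningful; beyond these two points the argument is as in \cite{Seidel_K3}.
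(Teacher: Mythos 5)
Your overall scaffolding (identify the vanishing cycles with Sheridan's spheres, establish negativity, conclude as in Seidel's quartic paper) points in the right direction, but two of your load-bearing steps are not the ones the paper uses, and as stated they do not work.

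First, the role of negativity. You use it to ``force the index gradings to be $\bZ$-valued'' and to make a ``reconstruction from $\dirFuk$'' meaningful. That is not what negativity does. Gradings exist because $X_0$ is Calabi--Yau; negativity of the large complex structure limit monodromy is used only for split-generation: iterating the negative composition of Dehn twists drives the degrees in Seidel's long exact sequence to $-\infty$, whence every rational Lagrangian brane lies in the split-closure of the vanishing cycles (Lemma \ref{lm:Fuk_generation}, where the hypothesis $n=3$ enters so that branes can be perturbed off $\Sing(X_\infty)$). Relatedly, there is no theorem reconstructing $D^\pi\Fuk(X_0)$ from the directed category $\dirFuk$ by ``twisted complexes and a triangulated quotient'': the directed category discards exactly the information (the $q$-dependent products among the vanishing cycles inside the compact $X_0$) that distinguishes $\Fuk(X_0)$ from $\Fuk(M_0)$, cf.\ Remark \ref{rm:compatibility}. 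The paper instead works with the full, non-directed $A_\infty$-subcategory $\scF_q$ of the $625$ vanishing cycles in $X_0$.

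Second, and more seriously, you assert that Sheridan's pair-of-pants computation delivers the $A_\infty$-structure ``together with the dependence of its products on the Novikov parameter,'' so that one can match structure constants with the B-side directly. Sheridan's computation is exact and takes place in the affine piece; it identifies only the central fibre $\scF_0\simeq\scS_0$ (Corollary \ref{cr:affine_hms}) and sees no $q$-dependence, since the $q$-corrections come from holomorphic discs crossing $X_\infty$, which nobody counts. The actual argument is deformation-theoretic: the $\bZ/(n+2)\bZ$-invariant part of the truncated Hochschild cohomology $HH^2(\scS_0,\scS_0)^{\le 0}$ is one-dimensional, so any equivariant one-parameter deformation of $\scS_0$ that is nontrivial at first order is $\psi^*\scS_q$ for some $\psi\in\End(\Lambda_\bN)^\times$ (Proposition \ref{pr:characterize_S2}); and the first-order nontriviality of $\scF_q$ is established not by a disc count but by the Lagrangian-torus argument comparing Floer cohomology over $\Lambda_0$ with that over $\Lambda_\bQ$ (Proposition \ref{pr:Fuk_non-trivial}). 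This rigidity step --- which is also the true origin of the reparametrization $\psi$ --- is absent from your proposal, and without it the claim that the structure constants ``are forced to agree after reparametrization'' is unsupported.
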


Here $\Lambda_\bN = \bC[\![q]\!]$ is the ring of
formal power series in one variable and
$
 \Lambda_\bQ
$
is its algebraic closure.
The automorphism $\psihat$ of $\Lambda_\bQ$
is any lift of the automorphism
$\psi$ of $\Lambda_\bN$,
and the category $\psihat^* D^b \coh Z_q^*$ is obtained
from $D^b \coh Z_q^*$
by changing the $\Lambda_\bQ$-module structure
by $\psihat$.
The category $D^\pi \Fuk(X_0)$ is the split-closed
derived Fukaya category of $X_0$
consisting of rational Lagrangian branes.
The symplectic structure of $X_0$
and hence the parameter $q$ come from
$5$ times the Fubini-Study metric
of the ambient projective space $\bP_\bC^4$.
The mirror family $Z_q^* = [Y_q^* / \Gammab]$ is
the quotient of
the hypersurface
$$
 Y_q^* =
  \{[y_1 : \cdots : y_5] \in \bP^{4}_{\Lambda_\bQ} \mid
   y_1 \cdots y_{5} + q( y_1^{5} + \cdots + y_{5}^{5})
     = 0 \}
$$
by the group
\begin{equation} \label{eq:Gamma}
 \Gamma = \{ [\diag(a_1, \ldots, a_{5})]
  \in \PSL_{5}(\bC) \mid
  a_1^{5} = \cdots = a_{5}^{5}
   = a_1 \cdots a_{5}
   = 1 \}.
\end{equation}
Let $Z_q = [ Y_q / \Gamma]$ be the quotient
of the hypersurface $Y_q$ of $\bP_{\Lambda_\bN}^4$
defined by the same equation as $Y_q^*$ above.
The equivalence \eqref{eq:hms} is obtained by
combining the equivalences
$$
 D^\pi \Fuk(X_0)
  \cong \psihat^* D^\pi \scS_q^*
  \cong \psihat^* D^b \coh Z_q^*
$$
for an $A_\infty$-algebra
$\scS_q^* = \scS_q \otimes_{\Lambda_\bN} \Lambda_\bQ$
as follows:
\begin{enumerate}
 \item
The derived category $D^b \coh Z_q^*$
of coherent sheaves on $Z_q^*$ has
a split-generator,
which extends to an object of $D^b \coh Z_q$.
The quasi-isomorphism class
of the endomorphism dg algebra $\Sqb$
of this object is characterized
by its cohomology algebra
together with a couple of additional properties
up to pull-back by $\End(\Lambda_\bN)^\times$.
 \item
The Fukaya category $\Fuk(X_0)$ contains
$625$ distinguished Lagrangian spheres.
They are vanishing cycles
for a pencil of quintic Calabi-Yau 3-folds,
and a suitable combination of symplectic Dehn twists along them
is isotopic to the {\em large complex structure limit monodromy}.
 \item
The large complex structure limit monodromy
has a crucial property of {\em negativity},
which enables one to show that
the vanishing cycles split-generate
the derived Fukaya category $D^\pi \Fuk(X_0)$.
 \item
The total morphism $A_\infty$-algebra
$\Fqb$
of the vanishing cycles
has the same cohomology algebra as $\scS_q$
and satisfies the additional properties
characterizing $\scS_q$.
\end{enumerate}

The condition that $X_0$ is a 3-fold
is used in the proof that vanishing cycles
split-generate the Fukaya category,
cf.~Remarks \ref{rm:negativity} and
\ref{rm:generation}.
Sheridan \cite{Sheridan_CY} proved homological mirror symmetry
for Calabi-Yau hypersurfaces in projective spaces
along the lines of \cite{Sheridan_pants}.
In contrast to Sheridan's approach,
our proof is based on the relation
between Sheridan's immersed Lagrangian sphere
in a pair of pants and
vanishing cycles on Calabi-Yau hypersurfaces,
and gives the compatibility
of homological mirror symmetry
for the projective space
and its Calabi-Yau hypersurface
as in Remark \ref{rm:compatibility}.

This paper is organized as follows:
Sections \ref{sc:coherent_sheaves} and \ref{sc:fuk}
have little claim in originality,
and we include them for the readers' convenience.
In Section \ref{sc:coherent_sheaves},
we recall the description of the derived category
of coherent sheaves on $Z_q^*$
due to Seidel \cite{Seidel_K3}.
In Section \ref{sc:fuk},
we extend Seidel's discussion
on the Fukaya category of the quartic surface
to general projective Calabi-Yau hypersurfaces.
Strictly speaking,
the work of Fukaya, Oh, Ohta and Ono
\cite{Fukaya-Oh-Ohta-Ono}
that we rely on in this section
gives not a full-fledged $A_\infty$-category
but an $A_\infty$-algebra
for a Lagrangian submanifold
and an $A_\infty$-bimodule
for a pair of Lagrangian submanifolds.
While there is apparently no essential difficulty
in generalizing their work to construct an $A_\infty$-category
(for transversally intersecting sequence of Lagrangian submanifolds,
one can regard it as a single immersed Lagrangian submanifold
and use the work of Akaho and Joyce \cite{Akaho-Joyce}),
we do not attempt to settle this foundational issue in this paper.
Sections \ref{sc:negativity} and \ref{sc:sheridan_vc} are
at the heart of this paper.
In Section \ref{sc:negativity},
we prove the negativity of the large complex structure limit
monodromy using ideas of Seidel \cite{Seidel_K3} and Ruan \cite{Ruan_II}.
In Section \ref{sc:sheridan_vc},
we use ideas from
\cite{Seidel_GKQ} and \cite{Futaki-Ueda_Pn}
to reduce Floer cohomology computations
on vanishing cycles
needed in Section \ref{sc:fuk}
to a result of Sheridan \cite{Sheridan_pants}.


{\bf Acknowledgment}:
We thank Akira Ishii, Takeo Nishinou and
Nick Sheridan for valuable discussions.
We also thank the anonymous referees
for helpful suggestions and comments.
Y.~N. is supported
by Grant-in-Aid for Young Scientists (No.19740025).
K.~U. is supported
by Grant-in-Aid for Young Scientists (No.20740037).

\section{Derived category of coherent sheaves}
 \label{sc:coherent_sheaves}

Let $V$ be an $(n+2)$-dimensional complex vector space
spanned by $\{ v_i \}_{i=1}^{n+2}$,
and
$\{ y_i \}_{i=1}^{n+2}$ be the dual basis of $V^\vee$.
The projective space $\bP(V)$ has a full exceptional collection
$
 (F_k = \Omega_{\bP(V)}^{n+2-k}(n+2-k)[n+2-k])_{k=1}^{n+2}
$
by Beilinson \cite{Beilinson}.
The full dg subcategory of 
(the dg enhancement of) $D^b \coh \bP(V)$
consisting of $(F_k)_{k=1}^{n+2}$ is quasi-isomorphic
to the $\bZ$-graded category
$
 \dirC_{n+2}
$
with $(n+2)$ objects
$
 X_1, \ldots, X_{n+2}
$
and morphisms
$$
 \Hom_{\dirC_{n+2}} (X_j, X_k)
  =
\begin{cases}
 \Lambda^{k-j} V & j \le k, \\
 0 & \text{otherwise}.
\end{cases}
$$
The differential is trivial,
the composition is given by the wedge product,
and the grading is such that $V$ is homogeneous of degree one.
One can equip $(F_k)_{k=1}^{n+2}$
with a $\GL(V)$-linearization
so that this quasi-isomorphism
is $\GL(V)$-equivariant.
Let
$
 \iota_0 : Y_0
  \hookrightarrow \bP(V)
$
be the inclusion
of the union of coordinate hyperplanes
and set
$
 E_{0, k} = \iota_0^* F_k.
$
The total morphism dg algebra
$
 \bigoplus_{i, j=1}^{n+2} \hom (E_{0,i}, E_{0,j})
$
of this collection will be denoted by $\Sa$.

Let $C_{n+2}$ be the trivial extension category
of $\dirC_{n+2}$ of degree $n$
as defined in \cite[Section (10a)]{Seidel_K3}.
It is a category with the same object as $\dirC_{n+2}$.
The morphisms are given by
$$
 \Hom_{C_{n+2}}(X_j, X_k) = \Hom_{\dirC_{n+2}}(X_j, X_k)
  \oplus \Hom_{\dirC_{n+2}}(X_k, X_j)^\vee[-n],
$$
and the compositions are given by
$$
 (a, a^\vee) (b, b^\vee)
  = (a b, a^\vee (b \, \cdot \,))
   + (-1)^{\deg(a) (\deg(b) + \deg(b^\vee))} b^\vee(\, \cdot \, a).
$$
From this definition,
one can easily see that
$$
 \Hom_{C_{n+2}} (X_j, X_k)
  =
\begin{cases}
 \Lambda^{k-j} V & j < k, \\
 \Lambda^0 V \oplus \Lambda^{n+2} V[2] & j = k, \\
 \Lambda^{k-j+n+2} V[2] & j > k. \\
\end{cases}
$$
The total morphism algebra $Q_{n+2}$
of this category $C_{n+2}$ admits the following description:
Set
$
 \gamma = \zeta_{n+2} \id_{V}
$
for
$
 \zeta_{n+2} = \exp(2 \pi \sqrt{-1} / (n+2))
$
and let
$
 \Gamma_{n+2}
  = \la \gamma \ra \subset \SL(V)
$
be a cyclic subgroup of order $n+2$.
The group algebra $R_{n+2} = \bC \Gamma_{n+2}$ is
a semisimple algebra of dimension $n+2$,
whose primitive idempotents are given by
$$
 e_j = \frac{1}{n+2}
 (e + \zeta_{n+2}^{-j} \gamma + \cdots
    + \zeta_{n+2}^{-(n+1)j} \gamma^{n+1})
 \in \bC \Gamma_{n+2}.
$$
Let
$
 \Lambda V = \bigoplus_{i=0}^{n+2} \Lambda^i V
$
be the exterior algebra
equipped with the natural $\bZ$-grading
and
$
 \Qtilde_{n+2} = \Lambda V \rtimes \Gamma_{n+2}
$
be the semidirect product.
%
There is an $R_{n+2}$-algebra isomorphism
between $\Qtilde_{n+2}$ and $Q_{n+2}$
sending $e_k \Qtilde_{n+2} e_j$ to $\Hom_{C_{n+2}}(X_j, X_k)$.
This isomorphism does not preserve the $\bZ$-grading;
$Q_{n+2}$ is obtained from $\Qtilde_{n+2}$ by assigning degree
$\frac{n}{n+2} k$ to $\Lambda^k V \otimes \bC \Gamma_{n+2}$
and adding $\frac{2}{{n+2}} (k - j)$ to the piece
$
 e_k \Qtilde e_j.
$

Let $H$ be a maximal torus of $\SL(V)$
and $T$ be its image in $\PSL(V) = \SL(V) / \Gamma_{n+2}$.
The group $T$ acts on $Q_{n+2}$
by an automorphism of a graded $R_{n+2}$-algebra
in such a way that $[\diag(t_1, t_2, \dots, t_{n+2})]$
sends $v \otimes e_i \in e_{i+1} Q_{n+2} e_i$
to $(\diag(1, t_2 / t_1, \dots, t_{n+2} / t_1) \cdot v) \otimes e_i$.

The dg algebra $\Sa$ is characterized
by the following properties:

\begin{lemma}[{cf.~\cite[Lemma 10.2]{Seidel_K3}}]
Assume that a $T$-equivariant $A_\infty$-algebra $\scQ_{n+2}$
over $R_{n+2}$
satisfies the following properties:
\begin{itemize}
 \item
The cohomology algebra $H^*(\scQ_{n+2})$
is $T$-equivariantly isomorphic to $Q_{n+2}$
as an $R_{n+2}$-algebra.
 \item
$\scQ_{n+2}$ is not quasi-isomorphic
to $Q_{n+2}$.
\end{itemize}
Then one has a $R_{n+2}$-linear,
$T$-equivariant quasi-isomorphism
$
 \scQ_{n+2} \simto \scS_{n+2}.
$
\end{lemma}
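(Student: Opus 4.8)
The plan is to follow the proof of \cite[Lemma 10.2]{Seidel_K3} essentially verbatim; the only place where the general-dimensional case genuinely needs new input is a Hochschild cohomology computation, which must be carried out for arbitrary $n$ rather than only for $n = 2$. First, since the torus $T$ is reductive, I replace $\scQ_{n+2}$ by a quasi-isomorphic $T$-equivariant, $R_{n+2}$-linear \emph{minimal} $A_\infty$-algebra; fixing a $T$-equivariant $R_{n+2}$-algebra isomorphism $H^*(\scQ_{n+2}) \cong Q_{n+2}$ supplied by the first hypothesis, I then regard $\scQ_{n+2}$ as a $T$-equivariant, $R_{n+2}$-linear minimal $A_\infty$-structure on the graded $R_{n+2}$-algebra $Q_{n+2}$ with $\mu^1 = 0$ and $\mu^2$ the given product. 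The dg algebra $\Sa$ likewise determines such a structure --- this is precisely what it means for $\Sa$ to satisfy the two listed properties --- and it is not formal. It therefore suffices to show that, up to $T$-equivariant $R_{n+2}$-linear quasi-isomorphism, there are exactly two such $A_\infty$-structures on $Q_{n+2}$: the formal one and the one given by $\Sa$.

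The crux is the relevant part of the Hochschild cohomology of $Q_{n+2}$. Concretely, I want: \emph{the $T$-invariant part of the Hochschild cohomology $HH^*_{R_{n+2}}(Q_{n+2}, Q_{n+2})$ relative to $R_{n+2}$, in the bidegrees $(s, 2 - s)$ with $s \ge 3$ governing $A_\infty$-deformations of the associative structure, is concentrated in a single bidegree $(s_0, 2 - s_0)$, where it is one-dimensional.} I would deduce this from the $R_{n+2}$-algebra isomorphism $Q_{n+2} \cong \Lambda V \rtimes \Gamma_{n+2}$ recorded above and the semisimplicity of $R_{n+2}$, which reduce the problem to the $\Gamma_{n+2}$-equivariant Hochschild cohomology of the Koszul algebra $\Lambda V$. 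A Hochschild--Kostant--Rosenberg type computation, through the Koszul dual $\operatorname{Sym}(V^\vee)$, describes $HH^*(\Lambda V, \Lambda V)$ as an explicit bigraded $\GL(V)$-representation; intersecting with the bidegrees selected by the fractional regrading defining $Q_{n+2}$ and the $A_\infty$ degree condition, taking $\Gamma_{n+2}$- and $T$-invariants, and checking that the twisted sectors of the crossed product contribute nothing there, one is left with a single line, a shift of the top exterior power $\Lambda^{n+2} V$; geometrically this line carries the class forcing a non-trivial higher product that reflects the degree-$(n+2)$ equation $y_1 \cdots y_{n+2}$ cutting out $Y_0$. This computation, rather than any conceptual difficulty, is where the real work lies, and it is the step I expect to be most delicate --- but none of it uses $n = 2$.

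Granting the computation, the argument concludes as in \cite{Seidel_K3}. Because the $T$-invariant Hochschild cohomology vanishes in every relevant bidegree other than $(s_0, 2 - s_0)$, the inductive obstruction theory for $T$-equivariant $R_{n+2}$-linear $A_\infty$-structures on $Q_{n+2}$ is unobstructed, and such a structure is determined, up to $T$-equivariant $R_{n+2}$-linear quasi-isomorphism inducing the identity on cohomology, by the class of $\mu^{s_0}$ in the one-dimensional space $HH^{s_0}_{R_{n+2}}(Q_{n+2}, Q_{n+2})^{2 - s_0}$. The group of $T$-equivariant $R_{n+2}$-linear graded algebra automorphisms of $Q_{n+2}$ acts on that line, and one checks --- using for instance the rescaling automorphism multiplying $\Lambda^k V$ by $\lambda^k$ --- that this action is through a surjection onto $\bC^\times$, hence transitive on the non-zero elements. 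So there are exactly two $T$-equivariant $R_{n+2}$-linear quasi-isomorphism classes of such $A_\infty$-structures on $Q_{n+2}$: the one with $[\mu^{s_0}] = 0$, namely $Q_{n+2}$ itself, and the one with $[\mu^{s_0}] \ne 0$. The dg algebra $\Sa$ is not formal, so it represents the second class; and $\scQ_{n+2}$ is not quasi-isomorphic to $Q_{n+2}$ by the second hypothesis, so it too represents the second class. Therefore $\scQ_{n+2}$ is quasi-isomorphic to $\Sa$ by an $R_{n+2}$-linear, $T$-equivariant quasi-isomorphism, as claimed.
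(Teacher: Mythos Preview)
Your proposal is correct and follows essentially the same route as the paper: compute the $T$-invariant Hochschild cohomology of $Q_{n+2}$ in the bidegrees governing $A_\infty$-deformations, find it is one-dimensional and concentrated in a single bidegree (the paper identifies the generator explicitly as $y_1\cdots y_{n+2}\in (S^{n+2}V^\vee)^H$, not a top exterior power as you wrote, though you name the class correctly a moment later), and then invoke the standard deformation/obstruction argument (the paper cites \cite[Lemma~3.2]{Seidel_K3} for the step you spell out with the automorphism action). The only substantive difference is presentational: the paper records the full twisted-sector formula for $HH^{s+t}(Q_{n+2},Q_{n+2})^t$ and reads off the answer, whereas you describe the same computation in words.
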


\begin{proof}[Sketch of proof]
The proof of the fact
that these properties
are satisfied by $\scS_{n+2}$
is identical to
\cite[Section (10d)]{Seidel_K3}.
The uniqueness comes from the
Hochschild cohomology computations
in \cite[Section (10a)]{Seidel_K3}:
The Hochschild cohomology of $\Qtilde_{n+2}$ is given by
$$
 HH^{s+t}(\Qtilde_{n+2}, \Qtilde_{n+2})^t
  \cong \bigoplus_{\gamma \in \Gamma_{n+2}}
   \lb S^s(V^\gamma)^\vee
  \otimes \Lambda^{s+t - \codim V^\gamma} (V^\gamma)
  \otimes \Lambda^{\codim V^\gamma} (V / V^\gamma)
   \rb^{\Gamma_{n+2}},
$$
where
$
 S V = \bigoplus_{i=0}^\infty S^i V
$
is the symmetric algebra of $V$
(cf.~\cite[Proposition 4.2]{Seidel_K3}).
By the change of the grading
from $\Qtilde_{n+2}$ to $Q_{n+2}$,
one obtains
$$
 HH^{s+t}(Q_{n+2}, Q_{n+2})^{t}
  \cong \bigoplus_{\gamma \in \Gamma_{n+2}}
   \lb S^s(V^\gamma)^\vee
  \otimes \Lambda^{s + \frac{n+2}{n} t - \codim V^\gamma} (V^\gamma)
  \otimes \Lambda^{\codim V^\gamma} (V / V^\gamma)
   \rb^{\Gamma_{n+2}}.
$$
By passing to the $T$-invariant part,
one obtains
\begin{align}
 (HH^2(Q_{n+2}, Q_{n+2})^{2-d})^T
  &= (S^d V^\vee \otimes \Lambda^{n+2-d} V)^H \nonumber \\
  &=
\begin{cases}
 \bC \cdot y_1 \cdots y_{n+2} & d = n + 2, \\
 0 & \text{for all other $d > 2$},
\end{cases}
 \label{eq:hh2}
\end{align}
so that $\Sa$ is determined
by the above properties
up to quasi-isomorphism
\cite[Lemma 3.2]{Seidel_K3}.
\end{proof}

Let $\bP_{\Lambda_{\bN}} = \bP(V \otimes_\bC \Lambda_\bN)$
be the projective space over $\Lambda_\bN$
and $Y_q$ be the hypersurface defined by
$
 q (y_1^{n+2} + \cdots + y_{n+2}^{n+2}) + y_1 \cdots y_{n+2}
  = 0.
$
The geometric generic fiber of the family
$Y_q \to \Spec \Lambda_{\bN}$
is the smooth Calabi-Yau variety
$
 Y_q^* = Y_q \times_{\Lambda_{\bN}} \Lambda_\bQ
$
appearing in Introduction,
and the special fiber is $Y_0$ above.
The collection $E_{0, k}$ is the restriction
of the collection $E_{q, k}$ on $Y_q$
obtained from the Beilinson collection
on $\bP_{\Lambda_\bN}$,
and its restriction to $Y_q^*$
split-generates $D^b \coh Y_q^*$
by \cite[Lemma 5.4]{Seidel_K3}.

Let
$
 \Gamma
$
be the abelian subgroup of $\PSL_{n+2}(\bC)$
defined in \eqref{eq:Gamma}.
Each $E_{q, k}$ admits $(n+2)^n$ ways
of $\Gamma$-linearizations,
so that one obtains $(n+2)^{n+1}$ objects
of $D^b \coh Z_q = D^b \coh^\Gamma Y_q$,
whose total morphism dg algebra will be denoted by
$\scS_q$.
It is clear that their restriction to $Z_q^*$
split-generates $D^b \coh Z_q^*$, so that
one has the following:

\begin{lemma} \label{lm:coh_generation}
There is an equivalence
$$
 D^b \coh Z_q^* \cong D^\pi \scS_q^*
$$
of triangulated categories,
where
$
 \scS_q^* = \scS_q \otimes_{\Lambda_\bN} \Lambda_\bQ.
$\end{lemma}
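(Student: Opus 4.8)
The plan is to deduce the lemma from the split-generation statement on $Y_q^*$ recalled just above, from finite-group descent, and from the standard equivalence between a triangulated category equipped with a split-generating collection and the split-closure of the derived category of modules over its total endomorphism dg algebra.

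First I would pass to the equivariant picture: by definition $D^b\coh Z_q^* = D^b\coh^\Gamma Y_q^*$, and by \cite[Lemma 5.4]{Seidel_K3} the restrictions $E_{q,k}|_{Y_q^*}$ split-generate the non-equivariant category $D^b\coh Y_q^*$. I would then invoke the adjoint pair consisting of induction $\operatorname{Ind}$, which is both left and right adjoint to the forgetful functor $\operatorname{Res}$. On the one hand, since $\Gamma$ is finite abelian of order $(n+2)^n$, which is invertible in $\bC$, every object $\scF\in D^b\coh^\Gamma Y_q^*$ is a retract of $\operatorname{Ind}\operatorname{Res}\scF$ (average over $\Gamma$); on the other hand, because $\bC\Gamma\cong\bigoplus_{\chi}\bC_\chi$ runs over all characters $\chi$ of $\Gamma$, the projection formula shows that $\operatorname{Ind}$ applied to any single $\Gamma$-linearization of $E_{q,k}|_{Y_q^*}$ is the direct sum of all $(n+2)^n$ $\Gamma$-linearizations of $E_{q,k}$. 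Since $\operatorname{Res}\scF$ lies in the split-closed triangulated subcategory generated by $\{E_{q,k}|_{Y_q^*}\}$ and $\operatorname{Ind}$ is exact, it follows that $\scF$ lies in the split-closed triangulated subcategory generated by the $(n+2)^{n+1}$ linearized objects; thus these split-generate $D^b\coh Z_q^*$, confirming the assertion preceding the lemma.

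Second, I would apply the general principle — used repeatedly in \cite{Seidel_K3} and going back to work of Keller on derived equivalences — that if objects $P_i$ split-generate a triangulated category $\mathcal T$ carrying a natural dg enhancement, then the functor $R\Hom(\bigoplus_i P_i, -)$ induces an equivalence of $\mathcal T$ with $D^\pi\mathcal A$, the split-closure of the derived category of right modules over the total morphism dg algebra $\mathcal A = \bigoplus_{i,j}R\Hom(P_i,P_j)$. Applied to $\mathcal T = D^b\coh Z_q^*$ and the $(n+2)^{n+1}$ linearized objects, this gives $D^b\coh Z_q^*\cong D^\pi\mathcal A$.

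Third, I would identify $\mathcal A$ with $\scS_q^* = \scS_q\otimes_{\Lambda_\bN}\Lambda_\bQ$. By construction $\scS_q$ is the analogous total morphism dg algebra over $\Lambda_\bN$, built from the locally free sheaves $E_{q,k}$ and their $\Gamma$-linearizations on the hypersurface $Y_q$, which is proper and flat over $\Lambda_\bN$. Since $\Lambda_\bQ$ is torsion-free, hence flat, over the discrete valuation ring $\Lambda_\bN$, the formation of $R\mathcal{H}om$ of vector bundles and of pushforward along $Y_q\to\Spec\Lambda_\bN$ both commute with $-\otimes_{\Lambda_\bN}\Lambda_\bQ$; hence $\mathcal A\simeq\scS_q\otimes_{\Lambda_\bN}\Lambda_\bQ = \scS_q^*$, and the lemma follows. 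The only point that I expect to need genuine care is the equivariant descent of the first step — matching the $(n+2)^n$ linearizations of each $E_{q,k}$ with the character group of $\Gamma$, and fixing a dg enhancement of $D^b\coh Z_q^*$ for which the formal equivalence of the second step is available; the flat base change of the third step is routine once the flatness of $\Lambda_\bQ$ over $\Lambda_\bN$ and the local freeness of the $E_{q,k}$ on $Y_q$ are noted.
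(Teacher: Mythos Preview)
Your proposal is correct and follows essentially the same approach as the paper. The paper does not give a separate proof of this lemma: it simply asserts in the preceding sentence that the restrictions of the $(n+2)^{n+1}$ linearized objects to $Z_q^*$ split-generate $D^b\coh Z_q^*$ (declaring this ``clear''), and then states the lemma as an immediate consequence; your argument via the $\operatorname{Ind}/\operatorname{Res}$ adjunction, the Keller-type equivalence with $D^\pi$ of the endomorphism dg algebra, and flat base change is exactly the standard way to unpack that claim.
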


We write the inverse image of $\Gamma \subset \PSL(V)$
by the projection $\SL(V) \to \PSL(V)$ as $\Gammatilde$,
and set
$
 Q
  = Q_{n+2} \rtimes \Gamma
  = \Lambda V \rtimes \Gammatilde.
$
Then the cohomology algebra of $\Sb_q$ is given by
$
 Q \otimes \Lambda_\bN,
$
and the central fiber is
$
 \scS_0 = \scS_{n+2} \rtimes \Gamma.
$
As explained in \cite[Section 3]{Seidel_K3},
first order deformations of the dg (or $A_\infty$-)algebra
$\scS_0$
are parametrized
by the {\em truncated Hochschild cohomology}
$HH^2(\scS_0, \scS_0)^{\le 0}$.

\begin{lemma}[{cf. \cite[Lemma 10.5]{Seidel_K3}}]
The truncated Hochschild cohomology of $\scS_0$ satisfies
$$
 HH^1(\scS_0,\scS_0)^{\le 0}
  = \bC^{n+1}, \quad
 HH^2(\scS_0,\scS_0)^{\le 0}
  = \bC^{2 n + 3}.
$$
\end{lemma}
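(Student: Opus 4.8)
The cohomology algebra of $\scS_0$ is $Q = Q_{n+2} \rtimes \Gamma = \Lambda V \rtimes \Gammatilde$, and the plan is to compute $HH^*(\scS_0, \scS_0)$ in homological degrees $1$ and $2$ in two stages: first the Hochschild cohomology of the formal model $Q$, and then the correction coming from the non-formality of $\scS_0$. For the first stage, since $\Gammatilde$ is finite abelian one has the orbifold decomposition
\begin{equation*}
 HH^*(\Lambda V \rtimes \Gammatilde)
  \cong \Bigl( \bigoplus_{g \in \Gammatilde}
   HH^*(\Lambda V, {}_g \Lambda V) \Bigr)^{\Gammatilde},
\end{equation*}
and the twisted Hochschild cohomology of the exterior algebra is given by the same formula recalled above for $\Qtilde_{n+2}$ with $\Gamma_{n+2}$ replaced by $\Gammatilde$. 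Taking into account the regrading from $\Qtilde_{n+2}$ to $Q_{n+2}$, extended $\Gamma$-equivariantly, this yields
\begin{equation*}
 HH^{s+t}(Q, Q)^t
  \cong \bigoplus_{g \in \Gammatilde}
   \Bigl( S^s (V^g)^\vee
    \otimes \Lambda^{s + \frac{n+2}{n} t - \codim V^g}(V^g)
    \otimes \Lambda^{\codim V^g}(V / V^g) \Bigr)^{\Gammatilde}.
\end{equation*}

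In the truncated range $t \le 0$ the exponent $\frac{n+2}{n} t$ must be an integer and $s \ge 0$, so only finitely many pairs $(s,t)$ and only the sectors $g$ with small $\codim V^g$ contribute; in homological degrees $1$ and $2$ this makes $HH^1(Q,Q)^{\le 0}$ and $HH^2(Q,Q)^{\le 0}$ finite-dimensional and explicitly computable in terms of $\Gammatilde$-invariants of tensor products of symmetric and exterior powers, along the lines of the weight computation in \cite[Section~10]{Seidel_K3} but now over the finite group $\Gammatilde$. These groups are much larger than the asserted answers --- even the untwisted sector $g = 1$ already over-counts --- because the non-formality of $\scS_0$ has not yet entered. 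To bring it in, I would recall that $\scS_{n+2}$, hence $\scS_0$, is obtained from its formal model by an $A_\infty$-deformation supported in strictly negative internal degree, the relevant class being in the untwisted sector the one spanned by $y_1 \cdots y_{n+2}$ found above, together with its twisted-sector analogues, and then run the spectral sequence of the order filtration of this deformation, with $E_1 = HH^*(Q,Q)$ converging to $HH^*(\scS_0,\scS_0)$. Its differentials are iterated Gerstenhaber brackets with these classes and strictly decrease the internal degree, and the computation is to follow them in homological degrees $1$ and $2$ on the truncated part. I expect the upshot to be that the untwisted sector contributes the Lie algebra of the torus $T$ acting on $\scS_0$, so that $HH^1(\scS_0,\scS_0)^{\le 0} = \bC^{n+1}$, and that after cancellation the surviving classes in homological degree $2$ assemble to $HH^2(\scS_0,\scS_0)^{\le 0} = \bC^{2n+3}$, one of which is the class realized geometrically by the family $\scS_q$ over $\Lambda_\bN$.

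The main obstacle is precisely this last spectral-sequence computation: one has to determine which of the many classes in the truncated $HH^*(Q,Q)$ survive the deformation differentials, and how the twisted sectors --- whose number and fixed-locus combinatorics grow with $n$ --- contribute, which requires a careful bookkeeping of the $\Gammatilde$-weights together with the Gerstenhaber brackets. This is the one point where Seidel's argument for $n = 2$ in \cite[Lemma~10.5]{Seidel_K3} does not transcribe verbatim. A secondary subtlety, already present in \cite[Section~3]{Seidel_K3}, is to pin down the conventions for the internal grading and for the truncation ``$\le 0$'' --- in particular, which boundaries coming from strictly positive internal degree can reach the truncated range --- so that the bookkeeping above is unambiguous.
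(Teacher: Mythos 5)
Your framework coincides with the paper's: a spectral sequence whose truncated $E_2$-page is $HH^{s+t}(Q,Q)^t$ for $t\le 0$, converging to $HH^*(\scS_0,\scS_0)^{\le 0}$, with first nonzero differential given by the bracket with the non-formality class. But the proposal stops exactly where the proof has to begin: everything after ``I expect the upshot to be'' is the content of the lemma, and none of it is carried out. Two concrete things are missing. First, you never determine which bidegrees actually contribute. The paper's key observation is that for $s+t\le 2$ the only nonzero $E_2$-spots are $(s,t)=(0,0),(1,0),(2,0)$ and $(n+2,-n)$; in particular the twisted sectors with $\codim V^g>0$, whose ``fixed-locus combinatorics growing with $n$'' you flag as the obstacle, are all killed by $\Gammatilde$-invariance in this range and never enter. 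Second, the differential is the Schouten bracket with the \emph{single} class $y_1\cdots y_{n+2}$ --- there are no ``twisted-sector analogues'', as \eqref{eq:hh2} already shows this is the unique invariant class in internal degree $<0$ --- and you do not compute its kernel or image anywhere.

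Those computations are the whole proof. Explicitly: $\delta_{n+1}^{1,0}$ sends each of the $n+2$ generators $y_k\otimes v_k$ of $(V^\vee\otimes V)^{\Gammatilde}$ to $y_1\cdots y_{n+2}$, so its kernel (spanned by the differences $y_i\otimes v_i-y_j\otimes v_j$) has dimension $n+1$, which gives $HH^1$. In total degree $2$, the kernel of $\delta_{n+1}^{2,0}$ on the $(n+2)(n+1)/2$-dimensional space $(S^2V^\vee\otimes\Lambda^2V)^{\Gammatilde}$ is again $(n+1)$-dimensional, while the spot $(n+2,-n)$ contributes $(S^{n+2}V^\vee)^{\Gammatilde}$ --- spanned by the $y_k^{n+2}$ and $y_1\cdots y_{n+2}$, hence of dimension $n+3$ --- modulo the one-dimensional image of $\delta_{n+1}^{1,0}$, giving $n+2$; the total is $2n+3$. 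One must also check that the higher differentials $\delta_k$, $k>n+1$, vanish in this range. Without these steps the numbers $n+1$ and $2n+3$ are assertions, not conclusions; your heuristic that $HH^1$ should be the Lie algebra of $T$ happens to match the answer, but it is not a proof.
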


\begin{proof}[Sketch of proof]
There is a spectral sequence
leading to $HH^*(\scS_0, \scS_0)^{\le 0}$
such that
$$
 E_2^{s, t} =
\begin{cases}
 HH^{s+t}(Q, Q)^t & t \le 0, \\
 0 & \text{otherwise}.
\end{cases}
$$
The isomorphism
\begin{align*}
 HH^{s+t}(Q, Q)^{t}
  \cong \bigoplus_{\gamma \in \Gammatilde}
   \lb S^s(V^\gamma)^\vee
  \otimes \Lambda^{s + \frac{n+2}{n} t - \codim V^\gamma} (V^\gamma)
  \otimes \Lambda^{\codim V^\gamma} (V / V^\gamma)
   \rb^{\Gammatilde}
\end{align*}
implies that $E_2^{s, t} = 0$
for $s < 0$ or $s + \frac{n+2}{n} t < 0$,
which ensures the convergence of the spectral sequence.
One can easily see
that $E_2^{s, t}$ for $s + t \le 2$
is non-zero only if
$$
 (s, t) = (0, 0), (1, 0), (2, 0), \text{ or } (n+2, -n).
$$
The first nonzero differential is $\delta_{n+1}$,
which is the Schouten bracket
with the order $n+2$ deformation class
$y_1 \cdots y_{n+2}$
from \eqref{eq:hh2}.
In total degree $s + t = 1$,
we have the $\Gammatilde$-invariant part of $V^\vee \otimes V$,
which is spanned by elements $y_k \otimes v_k$
satisfying
$$
 \delta_{n+1}^{1, 0}(y_k \otimes v_k) = y_1 \cdots y_{n+2}
$$
for $k = 1, \dots, n+2$.
In total degree $s + t = 2$, we have
\begin{itemize}
 \item
$(S^2 V^\vee \otimes \Lambda^2 V)^{\Gammatilde}$
generated by $(n+2)(n+1)/2$ elements $y_j y_k \otimes v_j \wedge v_k$
satisfying
$$
 \delta_{n+1}^{2, 0}(y_j y_k \otimes v_j \wedge v_k)
  = (y_1 \cdots y_{n+2}) y_k \otimes v_k
      - (y_1 \cdots y_{n+2}) y_j \otimes v_j,
$$
 \item
$(S^{n+2} V^\vee)^{\Gammatilde}$ spanned by
$y_k^{n+2}$ together with $y_1 \cdots y_{n+2}$.
\end{itemize}
The kernel of $\delta_{n+1}^{1, 0}$ is spanned by
$$
 y_1 \otimes v_1 - y_2 \otimes v_2
$$
and its $n+1$ cyclic permutations,
which sum up to zero.
The image of $\delta_{n+1}^{1,0}$ is spanned
by $y_1 \cdots y_{n+2}$.
The kernel of $\delta_{n+1}^{2, 0}$ is spanned by
$$
 y_1 y_2 \otimes v_1 \wedge v_2
  + y_2 y_3 \otimes v_2 \wedge v_3
  - y_1 y_3 \otimes v_1 \wedge v_3
$$
and its $n+1$ cyclic permutations,
which also sum up to zero.
Differentials $\delta_k^{s, t}$
for $k > n + 1$ and $s + t \le 2$ vanish,
and one obtains the desired result.
%
\end{proof}

Unfortunately,
the second truncated Hochschild cohomology group
$ HH^2(\scS_0,\scS_0)^{\le 0}$
has multiple dimensions,
so that one needs additional structures
to characterize $\scS_q$ as a deformation of $\scS_0$.
The strategy adopted by Seidel
is to use a $\bZ / (n+2) \bZ$-action
coming from the cyclic permutation of the basis of $V$:
Let $U_{n+2}$ be an automorphism of
$Q_{n+2} = \Lambda V \rtimes \Gamma_{n+2}$
as an $R_{n+2}$-algebra,
which acts on the basis of $V$ as
$v_k \mapsto v_{k+1}$.
This lifts to a $\bZ / (n + 2) \bZ$-action on
$\scS_0 = \scS_{n+2} \rtimes \Gamma$,
and $\scS_q$ is characterized as follows:

\begin{proposition}[{cf.~\cite[Proposition 10.8]{Seidel_K3}}]
 \label{pr:characterize_S2}
Let $\scQ_q$ be a one-parameter deformation
of $\scS_0 = \scS_{n+2} \rtimes \Gamma$,
which is
\begin{itemize}
 \item
$\bZ / (n+2) \bZ$-equivariant, and
 \item
non-trivial at first order.
\end{itemize}
Then $\scQ_q$ is quasi-isomorphic to $\psi^* \scS_q$
for some $\psi \in \End(\Lambda_\bN)^\times$.
\end{proposition}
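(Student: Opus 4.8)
The plan is to reproduce Seidel's deformation-theoretic argument from \cite[Section~3 and Proposition~10.8]{Seidel_K3}, the only new ingredient being a $\bZ/(n+2)\bZ$-equivariant refinement of the Hochschild cohomology computation behind the previous lemma. Recall from that proof that $HH^2(\scS_0,\scS_0)^{\le 0}$ is concentrated in internal degrees $0$ and $-n$: the degree $0$ summand is the $(n+1)$-dimensional space $\ker\delta_{n+1}^{2,0}\subset(S^2 V^\vee\otimes\Lambda^2 V)^{\Gammatilde}$, the degree $-n$ summand has the classes of $y_1^{n+2},\dots,y_{n+2}^{n+2}$ as a basis, and $HH^2(\scS_0,\scS_0)^d=0$ for $d<-n$. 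My first step is to read off the action of the cyclic permutation automorphism $U_{n+2}$, sending $v_k\mapsto v_{k+1}$ and $y_k\mapsto y_{k+1}$, on these spaces. On the degree $-n$ summand it permutes the basis $\{y_k^{n+2}\}$ cyclically, so this is the regular representation of $\bZ/(n+2)\bZ$ and its invariant part is the line through $y_1^{n+2}+\dots+y_{n+2}^{n+2}$ --- which is precisely the first order deformation class of $\scS_q$, obtained by differentiating $q(y_1^{n+2}+\dots+y_{n+2}^{n+2})+y_1\cdots y_{n+2}$ at $q=0$. On the degree $0$ summand the sum of the $n+2$ cyclic images of the generator $y_1 y_2\otimes v_1\wedge v_2+y_2 y_3\otimes v_2\wedge v_3-y_1 y_3\otimes v_1\wedge v_3$ vanishes, so this is the augmentation ideal of the regular representation and has no invariants; the same computation applied to $\ker\delta_{n+1}^{1,0}\subset(V^\vee\otimes V)^{\Gammatilde}$, spanned by the $\sum_k c_k\, y_k\otimes v_k$ with $\sum_k c_k=0$, shows $HH^1(\scS_0,\scS_0)^{\le 0}$ also has no invariants. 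I therefore expect
$$
 \bigl(HH^1(\scS_0,\scS_0)^{\le 0}\bigr)^{\bZ/(n+2)\bZ}=0,\qquad
 \bigl(HH^2(\scS_0,\scS_0)^{\le 0}\bigr)^{\bZ/(n+2)\bZ}=\bC,
$$
the latter being the degree $-n$ line through the $\scS_q$-class.

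Granting this, the rest is formal. Since $\scQ_q$ is $\bZ/(n+2)\bZ$-equivariant its first order deformation class is $\bZ/(n+2)\bZ$-invariant, hence a scalar multiple of the $\scS_q$-class, and non-triviality at first order forces the scalar to be nonzero. I would then build $\psi\in\End(\Lambda_\bN)^\times$ together with an equivariant $A_\infty$-quasi-isomorphism $\scQ_q\to\psi^*\scS_q$ by induction on the order in $q$, choosing all data equivariantly: at the $q^k$ stage the obstruction in $HH^2(\scS_0,\scS_0)^{\le 0}$ is again $\bZ/(n+2)\bZ$-invariant, so by the first paragraph it lies on the degree $-n$ line through the $\scS_q$-class, and that component is absorbed by a suitable choice of the $q^k$-coefficient of $\psi$; the remaining exact cochain then admits an equivariant primitive $\Phi_k$ by averaging over $\bZ/(n+2)\bZ$. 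Invertibility of $\psi$ is exactly the hypothesis of non-triviality at first order. This is the argument of \cite[Lemma~3.2 and Proposition~10.8]{Seidel_K3}, with $\bZ/(n+2)\bZ$ in place of the group $\bZ/4$ used there.

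The only genuine work is the equivariant Hochschild computation of the first paragraph, which reduces to bookkeeping once the previous lemma is in hand; so I expect the main obstacle to be a matter of care rather than of ideas. Concretely, one must check that the $U_{n+2}$-action on $\scS_0=\scS_{n+2}\rtimes\Gamma$ is compatible with the $R_{n+2}$-algebra structure and the regrading and hence descends to the spectral sequence computing $HH^*(\scS_0,\scS_0)^{\le 0}$, so that the degree $-n$ piece really carries the honest regular representation of $\bZ/(n+2)\bZ$ and not a character twist of it; and one must make sure that ``one-parameter deformation'' is taken in the sense of \cite[Section~3]{Seidel_K3}, so that the obstruction theory and the role of $\End(\Lambda_\bN)^\times$ are literally those of the quartic surface. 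No new symplectic geometry or hard analysis enters.
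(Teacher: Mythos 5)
Your proposal is correct and follows essentially the same route as the paper: the paper also reduces the statement to the fact that $HH^2(\scS_0,\scS_0)^{\le 0,\,\bZ/(n+2)\bZ}\cong\bC\cdot(y_1^{n+2}+\dots+y_{n+2}^{n+2})$ (citing Seidel's Lemma 10.7) and then invokes the order-by-order deformation argument of \cite[Lemma 3.2 and Proposition 10.8]{Seidel_K3}. Your explicit verification that the invariants of the degree-$0$ piece and of $HH^1$ vanish, while the degree-$(-n)$ piece is the regular representation, just fills in the bookkeeping the paper leaves to the reference.
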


The proof that these conditions
characterize $\scS_q$
comes from the fact that
the invariant part
of the second truncated Hochschild cohomology
of the central fiber $\scS_0$
with respect to the cyclic group action
induced by $\scU_0$ is
one-dimensional
\cite[Lemma 10.7]{Seidel_K3};
$$
 HH^2(\scS_0, \scS_0)^{\le 0, \, \bZ / (n+2) \bZ}
  \cong \bC \cdot (y_1^{n+2}+\dots+y_{n+2}^{n+2}).
$$
The proof that these conditions
are satisfied by $\scS_q$
carries over verbatim
from \cite[Section (10d)]{Seidel_K3}.

\section{Fukaya categories}
 \label{sc:fuk}

Let
$
 X
  = \Proj \bC[x_1, \ldots, x_{n+2}]
$
be an $(n+1)$-dimensional complex projective space
and $o_X$ be the anticanonical bundle on $X$.
Let further $h$ be a Hermitian metric on $o_X$
such that the compatible unitary connection $\nabla$
has the curvature $- 2 \pi \sqrt{-1} \omega_X$,
where $\omega_X$ is $n + 2$ times
the Fubini-Study K\"{a}hler form on $X$.
Any complex submanifold of $X$ has a symplectic structure
given by the restriction of $\omega_X$.
The restriction of $(o_X, \nabla)$
to any Lagrangian submanifold $L$
has a vanishing curvature,
and $L$ is said to be {\em rational}
if the monodromy group of this flat connection is finite.
Note that this condition is equivalent to the existence
of a flat multi-section $\lambda_L$ of $o_X|_L$
which is of unit length everywhere.

Two sections
$
 \sigma_{X, \infty} = x_1 \cdots x_{n+2}
$
and
$
 \sigma_{X, 0} = x_1^{n+2} + \cdots + x_{n+2}^{n+2}
$
of $o_X$ generate a pencil
$
 \{ X_z \}_{z \in \bP_\bC^1}
$
of hypersurfaces
$$
 X_z = \{ x \in X \mid
  \sigma_{X,0}(x) + z \sigma_{X, \infty}(x) = 0 \},
$$
such that $X_0$ is the Fermat hypersurface and
$X_\infty$ is the union of $n+2$ coordinate hyperplanes.
The complement $M = X \setminus X_\infty$
is the big torus of $X$,
which can naturally be identified as
\begin{align*}
 M
  &= \{ x \in \bC^{n+2} \mid x_1 \cdots x_{n+2} \ne 0 \}
  / \bCx
  \cong \{ x \in \bC^{n+2} \mid x_1 \cdots x_{n+2} = 1 \}
  / \Gammaa^*,
\end{align*}
where
$
 \Gammaa^* = \{ \zeta \id_{\bC^{n+2}} \mid \zeta^{n+2} = 1 \}
$
is the kernel
of the natural projection
from $\SL_{n+2}(\bC)$ to $\PSL_{n+2}(\bC)$.
The map
$$
 \pi_M = \sigma_{X,0} / \sigma_{X,\infty} : M \to \bC
$$
is a Lefschetz fibration,
which has
$n + 2$ groups
of $(n+2)^n$ critical points
with identical critical values.
The group $\Gammab^* = \Hom(\Gammab, \bCx)$
of characters of the group $\Gammab$
defined in \eqref{eq:Gamma}
acts freely on $M$
through a non-canonical isomorphism
$\Gammab^* \cong \Gammab$
and the natural action of $\Gammab \subset \PSL_{n+2}(\bC)$
on 
$X$.
The quotient
$$
 \Mbar
  = M / \Gammab^*
  = \{ u = (u_1, \ldots, u_{n+2}) \in \bC^{n+2}
         \mid u_1 \cdots u_{n+2} = 1 \}
$$
is another algebraic torus,
where the natural projection $M \to \Mbar$ is given by
$
   u_k = x_k^{n+2}.
$
The map $\pi_M$ is $\Gammab^*$-invariant and
descends to the map
$
 \pi_\Mbar(u) = u_1 + \cdots + u_{n+2}
$
from the quotient:
\begin{equation*}
\begin{psmatrix}[rowsep=1cm]
 M & & \bC \\
  & \Mbar &
\end{psmatrix}
\psset{shortput=nab,arrows=->,nodesep=3pt,labelsep=3pt}
\small
\ncline{1,1}{1,3}^{\pi_M}
\ncline{1,1}{2,2}
\ncline{2,2}{1,3}_{\pi_\Mbar}
\end{equation*}
The map
$
 \pi_\Mbar : \Mbar \to \bC
$
is the Landau-Ginzburg potential
for the mirror of $\bP^{n+1}$,
which has $n + 2$ critical points
with critical values
$
 \{ (n+2) \zeta_{n+2}^{-i} \}_{i=1}^{n+2}
$
where
$
 \zeta_{n+2} = \exp[2 \pi \sqrt{-1} / (n+2)].
$
Choose the origin as the base point
and take the distinguished set
$(\delta_i)_{i=1}^{n+2}$
of vanishing paths
$
 \delta_i : [0, 1] \ni t
  \mapsto (n+2) \zeta_{n+2}^{-i} \, t \in \bC
$
as in Figure \ref{fg:Pn_vp}.
The corresponding vanishing cycles in
$
 \Mbar_0 = \pi_\Mbar^{-1}(0)
$
will be denoted by $V_i$.

\begin{figure}
\centering
\input{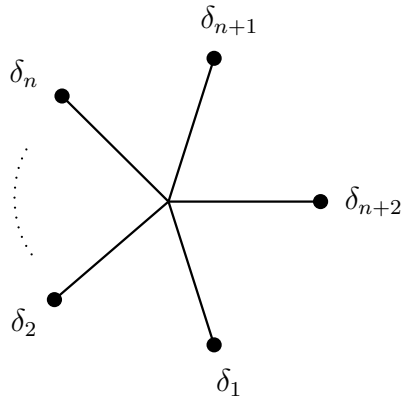}
\caption{The distinguished set $(\delta_i)_{i=1}^{n+2}$
of vanishing paths}
\label{fg:Pn_vp}
\end{figure}

Let $\Fa$ be the $A_\infty$-category
whose set of objects is $\{ V_i \}_{i=1}^{n+2}$ and
whose spaces of morphisms are
Lagrangian intersection Floer complexes.
This is a full $A_\infty$-subcategory of the Fukaya category
$\scF(\Mbar_0)$ of the exact symplectic manifold $\Mbar_0$.
See \cite{Seidel_PL}
for the Fukaya category
of an exact symplectic manifold,
and \cite{Fukaya-Oh-Ohta-Ono}
for that of a general symplectic manifold.
We often regard the $A_\infty$-category
$\Fa$ with $n+2$ objects
as an $A_\infty$-algebra
over the semisimple ring $R_{n+2}$
of dimension $n+2$.

As explained in Section \ref{sc:sheridan_vc} below,
the affine variety $\Mbar_0$ is an $(n+2)$-fold cover
of the $n$-dimensional pair of pants $\scP^n$,
and contains $n + 2$ Lagrangian spheres
$\{ L_i \}_{i=1}^{n+2}$
whose projection to $\scP^n$ is the Lagrangian immersion
studied by Sheridan \cite{Sheridan_pants}.
Let $\scA_{n+2}$ be the full $A_\infty$-subcategory
of $\scF(\Mbar_0)$ consisting of these Lagrangian spheres.
The following proposition
is proved in Section \ref{sc:sheridan_vc}:

\begin{proposition}
 \label{prop:sheridan_vc}
The Lagrangian submanifolds $L_i$ and $V_i$ are isomorphic
in $\scF(\Mbar_0)$.
\end{proposition}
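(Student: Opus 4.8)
The plan is to produce, for each $i$, a Lagrangian isotopy --- or, failing that, a Floer-theoretic isomorphism --- carrying $L_i$ to $V_i$, by combining a symmetry reduction with an induction on the dimension $n$. For the first reduction, consider the symplectomorphism $\rho$ of $\Mbar_0$ induced by $u \mapsto \zeta_{n+2}\, u$ (where we take the auxiliary structures defining $\scF(\Mbar_0)$ to be invariant under the finite group $\langle \rho \rangle$). It is a deck transformation of the $(n+2)$-fold covering $\Mbar_0 \to \scP^n$, and since the deck group is cyclic of order $n+2$ and generated by $\rho$, it acts simply transitively on the set $\{ L_i \}_{i=1}^{n+2}$ of lifts of Sheridan's immersed sphere. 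On the other hand $\pi_{\Mbar} \circ \rho = \zeta_{n+2}\, \pi_{\Mbar}$, so $\rho$ fixes the base point $0$, preserves $\Mbar_0 = \pi_{\Mbar}^{-1}(0)$, and cyclically permutes the critical values $(n+2)\zeta_{n+2}^{-i}$ together with the straight vanishing paths $\delta_i$; hence it also acts simply transitively on $\{ V_i \}_{i=1}^{n+2}$. It therefore suffices to produce a single isomorphism $L_j \cong V_k$ in $\scF(\Mbar_0)$: the common cyclic symmetry $\rho$ propagates it over the whole family, and the labelling of the $L_i$ may then be fixed so that $L_i \cong V_i$ for every $i$.

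For the dimension induction, eliminate $u_{n+2} = (u_1 \cdots u_{n+1})^{-1}$ to identify $(\Mbar, \pi_{\Mbar})$ with $(\bCx)^{n+1}$ carrying the Landau--Ginzburg potential $W_{n+1} = u_1 + \cdots + u_{n+1} + (u_1 \cdots u_{n+1})^{-1}$ mirror to $\bP^{n+1}$. Following the analysis of such potentials in \cite{Seidel_GKQ} and \cite{Futaki-Ueda_Pn}, the regular fibre $\Mbar_0^{(n)} = \Mbar_0$ admits, up to Lagrangian isotopy, the structure of a Lefschetz fibration $p_n \colon \Mbar_0^{(n)} \to \bC$ whose generic fibre is $\Mbar_0^{(n-1)}$ and under which each $V_i^{(n)}$ is the suspension of $V_i^{(n-1)}$ over a vanishing path, i.e.\ the Lagrangian sphere swept out by parallel transport of $V_i^{(n-1)}$. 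On the mirror side, the covering $\Mbar_0^{(n)} \to \scP^n$ is compatible with the inclusions $\Mbar_0^{(n-1)} \hookrightarrow \Mbar_0^{(n)}$ and $\scP^{n-1} \hookrightarrow \scP^n$ entering this fibration, and Sheridan's immersed sphere in $\scP^n$ is built from the one in $\scP^{n-1}$ by an explicit inductive construction; the statement to prove is that, under these identifications, the lift $L_i^{(n)}$ of the resulting immersed sphere is Lagrangian isotopic in $\Mbar_0^{(n)}$ to the suspension of $L_i^{(n-1)}$. Granting it, the assertion $L_i \cong V_i$ reduces to the base case $n = 1$, where $\Mbar_0^{(1)}$ is a punctured Riemann surface (the cyclic triple cover of the thrice-punctured sphere $\scP^1$), Sheridan's immersed figure-eight lifts to three disjoint embedded circles, and these are visibly the three vanishing cycles of $W_2 = u_1 + u_2 + (u_1 u_2)^{-1}$; this is an elementary picture, cf.~\cite{Futaki-Ueda_Pn}.

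Once the underlying Lagrangians are matched, the auxiliary data promoting $V_i$ and $L_i$ to objects of $\scF(\Mbar_0)$ --- grading and spin structure --- cause no difficulty: $\Mbar_0$ is Calabi--Yau so a $\bZ$-grading exists, $S^n$ carries a unique grading up to overall shift, and the shift is fixed by the normalisation in which the isomorphism $L_i \to V_i$ has degree $0$. The main obstacle is the inductive step above. ``Suspension'' of a Lefschetz fibration is intrinsically an operation on isotopy classes, whereas both Sheridan's spheres and our vanishing cycles are prescribed by concrete but a priori unrelated recipes; so the real work lies in (i) deforming $\pi_{\Mbar}$ into the requisite fibered form while keeping the vanishing cycles fixed up to isotopy, (ii) propagating the branched-cover structure $\Mbar_0^{(n)} \to \scP^n$ coherently through each step, so that the lift of the inductively constructed immersed sphere stays under control, and (iii) carrying out the base case carefully enough that the two cyclic labellings become compatible. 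Should a clean geometric isotopy prove intractable at some stage, it suffices instead to check directly that $V_i^{(n)}$ and $L_i^{(n)}$ represent isomorphic objects of $\scF(\Mbar_0)$, a Floer-cohomology computation to which the same inductive/fibered machinery applies.
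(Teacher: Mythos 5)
Your first reduction is exactly the one the paper makes: the deck group $\Gamma_{n+2}^*$ of $\Mbar_0 \to \scP^n$ acts simply transitively on both $\{L_i\}$ and $\{V_i\}$ (it rotates the critical values and the radial vanishing paths of $\pi_\Mbar$), so a single isomorphism $L_{n+2} \cong V_{n+2}$ suffices. The problem is everything after that. Your dimension induction hinges on the claim that, after deforming $\pi_\Mbar$ into a suitably fibered form, the lift $L_i^{(n)}$ of Sheridan's immersed sphere is Lagrangian isotopic to the suspension of $L_i^{(n-1)}$; you yourself label this ``the statement to prove'' and list the three places where ``the real work lies'' without doing any of them. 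That claim is precisely the geometric content of the proposition --- Sheridan's sphere is defined by perturbing the double cover $S^n \to \bP_\bR^n$ inside the real locus of $\scP^n$, and nothing in that construction hands you a suspension structure compatible with a Lefschetz fibration whose vanishing cycles are the $V_i$. Deferring it to ``a Floer-cohomology computation to which the same inductive/fibered machinery applies'' is circular: the machinery is the thing that has not been built. So as written the proposal is a plausible strategy, not a proof.

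For comparison, the paper's route avoids any induction and any direct isotopy. It introduces the auxiliary Lefschetz fibration $\varpi(u) = u_1$ on $\Mbar_0$, whose $n+2$ critical points are the preimages of the unique real critical point of $f$ on the positive component $U_1 \subset \scP^n$, and whose thimbles $U_\zeta$ over radial paths have one-point argument projections. A coamoeba computation (Lemma \ref{lem:non-intrsection}) shows $L_{n+2}$ is disjoint from all but two of these thimbles, hence orthogonal to $\Deltatilde_2, \ldots, \Deltatilde_{n+1}$ in Seidel's category $\Fuk(\varpi_E)$; since $\Hom^*(L_{n+2}, L_{n+2}) \cong H^*(S^n)$, Seidel's uniqueness lemma for spherical cones over an exceptional pair forces $L_{n+2} \cong \Cone(\Deltatilde_1 \to \Deltatilde_{n+2})$. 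The result of \cite{Futaki-Ueda_Pn} identifies $V_{n+2}$ with the matching cycle over $\mu_{n+2}$, which is the same cone, and a disk-confinement argument upgrades the isomorphism from $\scF(E)$ to $\scF(\Mbar_0)$. If you want to salvage your approach, you would need to supply a proof of the suspension compatibility for Sheridan's lifts at each inductive step --- which is likely at least as hard as the orthogonality-plus-cone argument the paper uses.
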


The inclusion $\Mbar_0 \subset \Mbar$
induces an isomorphism
$
 \pi_1(\Mbar_0)
  \cong \pi_1(\Mbar)
$
of the fundamental group.
Let $T$ be the torus
dual to $\Mbar$
so that
$
 \pi_1(\Mbar)
  \cong T^*
  := \Hom(T, \bCx).
$
One can equip $\Fa$
with a $T$-action
by choosing lifts of $V_i$
to the universal cover of $\Mbar_0$.
%
Let $\scF_0$ be the Fukaya category of $M_0$
consisting of $N = (n+2)^{n+1}$ vanishing cycles
$\{ \Vtilde_i \}_{i=1}^N$ of $\pi_M$
obtained by pulling-back $\{ V_i \}_{i=1}^{n+2}$.
The covering $M_0 \to \Mbar_0$
comes from a surjective group homomorphism
$\pi_1(\Mbar_0) \to \Gammab^*$,
which induces an inclusion
$\Gammab \hookrightarrow T$ of the dual group.
It follows from \cite[Equation (8.13)]{Seidel_K3}
that $\scF_0$ is quasi-isomorphic to $\scF_{n+2} \rtimes \Gammab$,
which in turn is quasi-isomorphic to $\scA_{n+2} \rtimes \Gammab$
by Proposition \ref{prop:sheridan_vc}.

The following proposition is due to Sheridan:

\begin{proposition}[{\cite[Proposition 5.15]{Sheridan_pants}}]
 \label{pr:sheridan}
$\scA_{n+2}$ is $T$-equivariantly quasi-isomorphic to $\scS_{n+2}$.\end{proposition}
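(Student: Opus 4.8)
The plan is to reduce this to Sheridan's computation \cite{Sheridan_pants} of the endomorphism algebra of his immersed Lagrangian sphere in the pair of pants $\scP^n$, which on the level of cohomology yields the exterior algebra $\Lambda V$, and then to promote that to a $T$-equivariant statement matching the characterization of $\scS_{n+2}$ established in the lemma above. First I would recall that $\scA_{n+2}$ is the full $A_\infty$-subcategory of $\scF(\Mbar_0)$ on the spheres $L_1, \dots, L_{n+2}$, and that their common image in $\scP^n$ is Sheridan's immersed sphere $\mathbb{L}$; the covering $\Mbar_0 \to \scP^n$ is an $(n+2)$-fold cyclic cover, so $\scA_{n+2}$ is recovered from the $A_\infty$-algebra of $\mathbb{L}$ by a semidirect product (or equivalently, a "fractional grading / group-action" bookkeeping) with the deck group $\Gamma_{n+2}$. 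Thus $\scA_{n+2}$ as an $R_{n+2}$-algebra is identified with $\End^{A_\infty}(\mathbb{L}) \rtimes \Gamma_{n+2}$, and the task becomes: show this equals $\scS_{n+2}$ $T$-equivariantly.

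The key steps, in order. \textbf{(i)} Compute $H^*(\scA_{n+2})$: by Sheridan's result the Floer cohomology of $\mathbb{L}$ is $\Lambda V$ with its wedge-product algebra structure, and taking the semidirect product with $\Gamma_{n+2}$ and incorporating the grading shift coming from the cyclic cover reproduces exactly the graded $R_{n+2}$-algebra $Q_{n+2} = \Lambda V \rtimes \Gamma_{n+2}$ described above; so $H^*(\scA_{n+2}) \cong Q_{n+2}$. \textbf{(ii)} Track the $T$-action: the $T$-action on $\scA_{n+2}$ comes from the choice of lifts of the $L_i$ to the universal cover of $\Mbar_0$, i.e.\ from $\pi_1(\Mbar_0) \cong T^*$; I would check that under the identification in (i) this action is precisely the one on $Q_{n+2}$ by which $[\diag(t_1,\dots,t_{n+2})]$ rescales $v \otimes e_i$ by $\diag(1, t_2/t_1, \dots, t_{n+2}/t_1)$, as stated before the lemma. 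This is a matter of matching the weight of each generator $v_k$ of $V$ against the corresponding loop class in $\Mbar_0$. \textbf{(iii)} Show $\scA_{n+2}$ is \emph{not} formal, i.e.\ not quasi-isomorphic to $Q_{n+2}$: this is the content that distinguishes $\scS_{n+2}$ from the trivial $A_\infty$-structure, and it follows because the pencil $\{X_z\}$ and the geometry of vanishing cycles force a nonzero higher product — concretely, the relevant degree $n+2$ class $y_1 \cdots y_{n+2}$ of \eqref{eq:hh2} is realized by a nontrivial holomorphic polygon count, exactly as in \cite[Section (10d)]{Seidel_K3}; Sheridan's computation already exhibits this nonvanishing $\mu^{n+2}$ on $\mathbb{L}$. \textbf{(iv)} Invoke the uniqueness lemma above: an $R_{n+2}$-linear $T$-equivariant $A_\infty$-algebra with $H^* \cong Q_{n+2}$ that is not formal is $T$-equivariantly quasi-isomorphic to $\scS_{n+2}$; apply it to $\scQ_{n+2} := \scA_{n+2}$.

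The main obstacle is step \textbf{(ii)}: matching the $T$-equivariant structures. The algebra isomorphism $H^*(\scA_{n+2}) \cong Q_{n+2}$ is relatively soft, but pinning down that the induced torus weights agree on the nose — rather than up to an automorphism of $T$ or a relabeling of the exceptional collection — requires carefully correlating Sheridan's basis of $HF^*(\mathbb{L})$ with the coordinate hyperplanes $\{y_k = 0\}$ and the loops they determine in $\Mbar_0$, and checking the grading shifts $\tfrac{n}{n+2}k$ and $\tfrac{2}{n+2}(k-j)$ are the ones produced by the $(n+2)$-fold cover. Step \textbf{(iii)} is in principle delicate too, but here we may lean on Sheridan's explicit disk count and on the fact, used repeatedly in \cite{Seidel_K3}, that formality would contradict the known deformation theory of $\scS_0$; so the real work is the equivariant bookkeeping in (ii), after which (iv) closes the argument.
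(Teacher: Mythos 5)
The paper does not prove this proposition at all: it is quoted verbatim as Sheridan's Proposition 5.15 and used as a black box, so there is no internal proof to compare against. Your proposal is, in effect, a reconstruction of Sheridan's own argument, and it has the right architecture: compute the cohomology algebra of the immersed sphere (exterior algebra), pass to the $(n+2)$-fold cover to get $H^*(\scA_{n+2})\cong Q_{n+2}$ as a $T$-equivariant $R_{n+2}$-algebra, verify non-formality via the nonvanishing $\mu^{n+2}$ exhibited by Sheridan's polygon count, and close with the uniqueness lemma of Section 2. You also correctly identify the genuinely delicate point, namely matching the $T$-weights and fractional gradings on the nose rather than up to an automorphism; the one loose phrase is the suggestion in step (iii) that the pencil $\{X_z\}$ is what forces non-formality --- the non-formality is established entirely inside the pair of pants by Sheridan's explicit disk count, with no reference to the pencil. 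Since the paper deliberately outsources all of this to Sheridan, your outline is a sound account of what that citation is standing in for.
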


Since $\scS_0 = \scS_{n+2} \rtimes \Gamma$,
one obtains the following:

\begin{corollary} \label{cr:affine_hms}
$\scF_0$ is quasi-isomorphic to $\scS_0$.
\end{corollary}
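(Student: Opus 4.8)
The plan is to chain together the quasi-isomorphisms already assembled in the two preceding paragraphs. The key inputs are: (1) the identification $\scF_0 \simeq \scF_{n+2} \rtimes \Gammab$ coming from \cite[Equation (8.13)]{Seidel_K3}, which expresses the Fukaya category of the covering fiber $M_0$ in terms of the downstairs Fukaya category $\scF_{n+2}$ twisted by the deck transformation group $\Gammab$; (2) Proposition \ref{prop:sheridan_vc}, which says $V_i \cong L_i$ in $\scF(\Mbar_0)$, hence $\scF_{n+2} \simeq \scA_{n+2}$ and therefore $\scF_{n+2} \rtimes \Gammab \simeq \scA_{n+2} \rtimes \Gammab$; and (3) Proposition \ref{pr:sheridan}, Sheridan's computation that $\scA_{n+2}$ is $T$-equivariantly quasi-isomorphic to $\scS_{n+2}$. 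Composing these gives $\scF_0 \simeq \scS_{n+2} \rtimes \Gammab$, and the definition $\scS_0 = \scS_{n+2} \rtimes \Gammab$ recalled just above finishes the argument.

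Concretely, I would first note that the semidirect product construction $(-) \rtimes \Gammab$ is functorial for $\Gammab$-equivariant quasi-isomorphisms of $A_\infty$-algebras, so that a $\Gammab$-equivariant (in fact $T$-equivariant, since $\Gammab \hookrightarrow T$) quasi-isomorphism $\scA_{n+2} \simto \scS_{n+2}$ induces a quasi-isomorphism $\scA_{n+2} \rtimes \Gammab \simto \scS_{n+2} \rtimes \Gammab$. Thus the only thing to check beyond quoting the three results is that the quasi-isomorphisms are compatible with the group actions: the isomorphism $\scF_{n+2} \simeq \scA_{n+2}$ induced by Proposition \ref{prop:sheridan_vc} must respect the $T$-action (equivalently the $\Gammab$-action via $\Gammab \hookrightarrow T$) that was put on $\scF_{n+2}$ by choosing lifts of the $V_i$ to the universal cover of $\Mbar_0$. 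Since the lifts of $V_i$ and $L_i$ can be matched under the isomorphism $V_i \cong L_i$ — both project to the same homotopy class data in $\pi_1(\Mbar_0) \cong T^*$ — this compatibility holds, and Sheridan's equivalence in Proposition \ref{pr:sheridan} is already stated $T$-equivariantly.

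I do not expect this corollary to present any real obstacle: essentially all the substance is in Propositions \ref{prop:sheridan_vc} and \ref{pr:sheridan} and in Seidel's Equation (8.13), all of which are available. The one point requiring a little care — and the only place where something could conceivably go wrong — is the bookkeeping of equivariance: making sure that the $\Gammab$-action implicit in the covering $M_0 \to \Mbar_0$ matches the restriction along $\Gammab \hookrightarrow T$ of the $T$-action on $\scA_{n+2}$ used in Sheridan's statement, and that the isomorphism of Proposition \ref{prop:sheridan_vc} is equivariant for these actions. Granting this compatibility, which is forced by the way the $T$-action was defined (via lifts to the universal cover) together with the identification $\pi_1(\Mbar_0) \cong \pi_1(\Mbar) \cong T^*$, the chain of quasi-isomorphisms
$$
 \scF_0 \;\simeq\; \scF_{n+2} \rtimes \Gammab
        \;\simeq\; \scA_{n+2} \rtimes \Gammab
        \;\simeq\; \scS_{n+2} \rtimes \Gammab
        \;=\; \scS_0
$$
yields the statement.
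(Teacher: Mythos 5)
Your proposal is correct and matches the paper's own argument exactly: the corollary is obtained by chaining $\scF_0 \simeq \scF_{n+2} \rtimes \Gammab \simeq \scA_{n+2} \rtimes \Gammab \simeq \scS_{n+2} \rtimes \Gammab = \scS_0$ using Seidel's Equation (8.13), Proposition \ref{prop:sheridan_vc}, and Proposition \ref{pr:sheridan}. Your extra attention to the equivariance bookkeeping is a sensible refinement of what the paper leaves implicit.
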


The vanishing cycles $\{ \Vtilde_i \}_{i=1}^N$ are
Lagrangian submanifolds
of the projective Calabi-Yau manifold $X_0$,
which are rational since they are contractible in $M$.
To show that they split-generate the Fukaya category
of $X_0$,
Seidel introduced the notion of {\em negativity}
of a graded symplectic automorphism.
Let $\frakL_{X_0} \to X_0$ be the bundle of
unoriented Lagrangian Grassmannians
on the projective Calabi-Yau manifold $X_0$.
The {\em phase function}
$
 \alpha_{X_0} : \frakL_{X_0} \to S^1
$
is defined by
$$
 \alpha_{X_0}(\Lambda) =
  \frac{\eta_{X_0} (e_1 \wedge \cdots \wedge e_n)^2}
       {|\eta_{X_0} (e_1 \wedge \cdots \wedge e_n)|^2},
$$
where
$
 \Lambda = \vspan_{\bR} \{e_1, \ldots, e_n \}
  \in \frakL_{X_0, x}
$
is a Lagrangian subspace of $T_x X_0$ and
$\eta_{X_0}$ is a holomorphic volume form on $X_0$.
The {\em phase function}
$
 \alpha_\phi : \frakL_{X_0} \to S^1
$
of a symplectic automorphism
$
 \phi : X_0 \to X_0
$
is defined by sending $\Lambda \in \frakL_{X_0, x}$ to
$
 \alpha_\phi(\Lambda) =
  \alpha_{X_0}(\phi_*(\Lambda)) / \alpha_{X_0}(\Lambda),
$
and a {\em graded symplectic automorphism}
is a pair $\phitilde = (\phi, \alphatilde_\phi)$
of a symplectic automorphism $\phi$ and
a lift $\alphatilde_\phi : \frakL_{X_0} \to \bR$
of the phase function $\alpha_\phi$
to the universal cover $\bR$ of $S^1$.
The group of graded symplectic automorphisms of $X_0$
will be denoted by $\Auttilde(X_0)$.
A graded symplectic automorphism
$\phitilde \in \Auttilde(X_0)$
is {\em negative}
if there is a positive integer $d_0$
such that
$
 \alphatilde_{\phi^{d_0}}(\Lambda) < 0
$
for all $\Lambda \in \frakL_{X_0}$.

The {\em phase function}
$
 \alpha_L : L \to S^1
$
of a Lagrangian submanifold $L \subset X_0$
is defined similarly by
$
 \alpha_L(x) = \alpha_{X_0}(T_x L),
$
and a {\em grading} of $L$ is a lift
$\alphatilde_L : L \to \bR$ of $\alpha_L$
to the universal cover of $S^1$.
Let $\Lambda_0$ be the local subring of $\Lambda_\bQ$
containing only non-negative powers of $q$,
and $\Lambda_+$ be the maximal ideal of $\Lambda_0$.
For a quintuple
$
 L^\sharp = (L, \alphatilde_L, \$_L, \lambda_L, J_L)
$
consisting of a rational Lagrangian submanifold $L$,
a grading $\alphatilde_L$ on $L$,
a spin structure $\$_L$ on $L$,
a multi-section $\lambda_L$ of $o_{X_0}|_L$,
and a compatible almost complex structure $J_L$,
one can endow the cohomology group
$
H^*(L; \Lambda_0)
$
with the structure $\{ \frakm_k \}_{k=0}^\infty$
of a {\em filtered $A_\infty$-algebra}
\cite[Definition 3.2.20]{Fukaya-Oh-Ohta-Ono},
which is well-defined
up to isomorphism
\cite[Theorem A]{Fukaya-Oh-Ohta-Ono}.
The map $\frakm_0 : \Lambda_0 \to H^1(L; \Lambda_0)$
comes from holomorphic disks bounded by $L$,
and measures the {\em anomaly} or {\em obstruction}
to the definition of Floer cohomology.
A solution $b \in H^1(L; \Lambda_+)$
to the {\em Maurer-Cartan equation}
$$
 \sum_{k=0}^\infty \frakm_k(b, \cdots, b) = 0
$$
is called a {\em bounding cochain}.
A {\em rational Lagrangian brane} is a pair
$\Ld = (L^\sharp, b)$
of $L^\sharp$
and a bounding cochain $b \in H^1(L; \Lambda_+)$.
For a pair $\Ld_1 = (L_1^\sharp, b_1)$
and $\Ld_2 = (L_2^\sharp, b_2)$
of rational Lagrangian branes,
the {\em Floer cohomology}
$
 HF(\Ld_1, \Ld_2; \Lambda_0)
$
is well-defined up to isomorphism.
The {\em Fukaya category} $\Fuk(X_0)$
is an $A_\infty$-category over $\Lambda_\bQ$
whose objects are rational Lagrangian branes
and whose spaces of morphisms are
Lagrangian intersection Floer complexes.

Let $\scF_q$ be the full $A_\infty$-subcategory
of $\scF(X_0)$
consisting of vanishing cycles $\Vtilde_i$
equipped with the trivial complex line bundles,
the canonical gradings and zero bounding cochains.
Since the restrictions of $(o_X, \nabla)$ to vanishing cycles
are trivial flat bundles,
the category $\scF_q$ is defined over $\Lambda_\bN$.

Let $\eta_M$ be the unique up to scalar
holomorphic volume form on $M$
which extends to a rational form on $X$
with a simple pole along $X_\infty$.
This gives a holomorphic volume form
$\eta_M / d z$ on each fiber $M_z = \pi_M^{-1}(z)$,
so that $\pi_M : M \to \bC$ is a locally trivial
fibration of graded symplectic manifolds
outside the critical values.
Let $\gamma_\infty : [0, 2 \pi] \to \bC$
be a circle of large radius $R \gg 0$ and
$
 \htilde_{\gamma_\infty} \in \Auttilde(M_R)
$
be the monodromy along $\gamma_\infty$.
Since $\gamma_\infty$ is homotopic
to a product of paths around each critical values,
one sees that $\htilde_{\gamma_\infty}$ is isotopic
to a composition of Dehn twists
along vanishing cycles.
We prove the following in Section \ref{sc:negativity}:

\begin{proposition}[cf. {\cite[Proposition 7.22]{Seidel_K3}}]
 \label{prop:negativity}
The graded symplectic automorphism
$
 \htilde_{\gamma_\infty} \in \Auttilde(M_R)
$
is isotopic to a graded symplectic automorphism
$
 \phitilde \in \Auttilde(M_R)
$
whose extension to $X_R$ has the following property:
There is an arbitrary small neighborhood
$W \subset X_R$ of the subset
$
 \Sing(X_{\infty}) \cap X_R
$ 
such that $\phi(W) = W$ and
$\phitilde|_{X_R \setminus W}$ is negative.
\end{proposition}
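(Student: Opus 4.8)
The plan is to adapt Seidel's proof of \cite[Proposition~7.22]{Seidel_K3} for the quartic surface to arbitrary dimension, using in addition Ruan's description \cite{Ruan_II} of the Fermat pencil near the large complex structure limit in terms of Lagrangian torus fibrations; this is carried out in Section~\ref{sc:negativity}. The underlying mechanism is that going once around a large circle $\gamma_\infty$ twists the phase function of the fibres of $\pi_M$ in the negative direction: the relative holomorphic volume form $\eta_M/dz$ rotates by a strictly negative amount as one traverses $\gamma_\infty$, because $\eta_M$ extends to a rational form on $X$ with a simple pole along the fibre at infinity $X_\infty = X \setminus M$, which is an effective anticanonical divisor. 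It is this feature --- special to a Calabi-Yau hypersurface in a Fano ambient space --- that will produce the asserted negativity.

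The symplectic parallel transport defining $\htilde_{\gamma_\infty}$ is a priori defined only on $M_R$, since the transport vector field degenerates towards the divisor at infinity $D_R := X_R \cap X_\infty$ of the compactification $X_R$; the content of the proposition is that, after an isotopy through graded symplectic automorphisms of $M_R$, one obtains a tame extension to $X_R$. To produce it I would model $\htilde_{\gamma_\infty}$ near $D_R$ on the toric structure of a neighbourhood of $X_\infty$ in $X$: away from the base locus such a neighbourhood is toric, the loop $\gamma_\infty$ is pulled back from the acting torus, and --- transcribing Ruan's picture of the monodromy of the special Lagrangian torus fibration --- the model $\phitilde$ near $D_R$ is a composition of commuting rotations normal to the $n+2$ components $D_R^{(i)} = X_R \cap \{ x_i = 0 \}$ of $D_R$. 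Since each of these rotations preserves the coordinate stratification of $\bP^{n+1}$, one can arrange that $\phi(W) = W$ for an arbitrarily small neighbourhood $W$ of $\Sing(X_\infty) \cap X_R$.

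On $X_R \setminus W$ the divisor $D_R$ is smooth, and there $\phitilde$ is, up to isotopy, the negative twist of the first paragraph, possibly composed with a local correction along $D_R$ whose contribution to the grading is bounded. Consequently $\alphatilde_{\phi^{d_0}}$ over $X_R \setminus W$ is, for $d_0$ large, dominated by $d_0$ times a strictly negative constant, so that $\alphatilde_{\phi^{d_0}}(\Lambda) < 0$ for every $\Lambda \in \frakL_{X_R}$ lying over $X_R \setminus W$; this is the negativity of $\phitilde|_{X_R \setminus W}$. For $n = 2$ this reduces to \cite[Proposition~7.22]{Seidel_K3}, and the computation is parallel in higher dimension.

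The main obstacle is the second paragraph: one must promote the abstract monodromy $\htilde_{\gamma_\infty}$, \emph{together with its grading}, to the concrete toric model in a neighbourhood of the normal crossings divisor $D_R$ while keeping $W$ invariant, and check that outside $W$ the local correction contributes only a bounded amount to the phase function so that the negative twist dominates. This requires a careful parametrized Moser argument along the deeper strata of $D_R$ together with control of the monodromy of $\eta_M$ there; granting the model, the estimate in the third paragraph is a routine higher-dimensional transcription of Seidel's.
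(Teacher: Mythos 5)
Your overall strategy --- local toric models along the strata of the normal crossings divisor $X_0 \cup X_\infty$, glued via Ruan-type tubular neighbourhood fibrations, with Seidel's phase estimates transported to each model --- is the same as the paper's. But the proposal defers exactly the two points where the higher-dimensional case differs from the quartic, and in doing so it misidentifies where the neighbourhood $W$ comes from. You present $W$ as a bookkeeping device (one ``can arrange'' $\phi(W)=W$ because the model rotations preserve the stratification) and assert that outside $W$ the negative twist dominates by ``a routine higher-dimensional transcription of Seidel's'' estimate. In fact the excision of $W$ is forced by the estimate itself. In the local model along a stratum $C_I$ with $0\in I$ and $|I|=k\ge 3$ (where the fibre approaches at least two components of $X_\infty$ simultaneously), $p$ has the form $\frac{y_2\cdots y_k}{y_1}q$, and the best available bound (Proposition \ref{Seidel7.16}) is $\tilde{\alpha}_{h^d_\zeta}(\Lambda^v)\le -2d\,(1+|\zeta|^2/|y_3|^{2(k-1)})^{-1}+n+1+\epsilon$: the coefficient of $-2d$ degenerates to $0$ as $y_2,y_3\to 0$, i.e.\ precisely along $\Sing(X_\infty)\cap X_R$. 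So negativity fails near $\Sing(X_\infty)\cap X_R$ for \emph{every} $d_0$, and holds on the complement of $W$ only because the coefficient is there uniformly bounded away from zero. This estimate (Lemmas \ref{lem:7.12}--\ref{lem:7.14} and Proposition \ref{Seidel7.16}) is the genuinely new analytic content for $\dim X_z\ge 3$ --- Seidel's Assumption 7.5 only covers $k=2$, where no such degeneration occurs --- and it is not delivered by the heuristic that $\eta_M$ has a simple pole along $X_\infty$; it is obtained by comparing the Hamiltonian flow of $-\frac12|p|^2$ with an explicit model vector field and passing to the symplectic reduction by the local $T^k$-action.

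The second deferred point, which you flag yourself, is the gluing: one needs compatible fibrations $\pi_I\colon U_I\to C_I$ for all strata, a deformation of $\omega_X$ in its class making the $C_i$ orthogonal and the fibres of $\pi_I$ complex at points of $C_I$, and compatible local $T^k$-actions built from the bundles $\scO(C_i)$ so that equivariant Darboux charts land in one of the two local models. The paper must prove \emph{weakened} versions of Ruan's Proposition 7.1 and Theorem 7.1 here, because the full holomorphicity Ruan asserts is not available; simply ``transcribing Ruan's picture'' skips this. As written, then, your text is a correct plan rather than a proof: the step you call routine is where the content of the proposition --- including the very presence of $W$ in its statement --- actually lives.
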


Here $\Sing(X_\infty)$ is the singular locus of $X_\infty$,
which is the union of $(n - 1)$-dimensional projective spaces.

\begin{lemma}[{cf.~\cite[Lemma 9.2]{Seidel_K3}}]
 \label{lm:Fuk_generation}
If $n = 3$,
then any rational Lagrangian brane
is contained in split-closed derived category of
$\scF_q^* = \scF_q \otimes_{\Lambda_\bN} \Lambda_\bQ$;
$$
 D^\pi \scF (X_0) \cong D^\pi \scF_q^*.
$$
\end{lemma}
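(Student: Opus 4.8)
The plan is to follow Seidel's generation argument \cite[Section 9]{Seidel_K3} verbatim, using Proposition~\ref{prop:negativity} as the only genuinely new input. Recall the general principle: if a symplectic Lefschetz fibration has fiber $M_R$ and the monodromy at infinity $\htilde_{\gamma_\infty}$ is negative, then the vanishing cycles split-generate the Fukaya category of the fiber, because a sufficiently large power of the negative monodromy becomes a quasi-unit supported on the vanishing cycles. The subtlety here is that $\htilde_{\gamma_\infty}$ is not negative on all of $X_R$, only on the complement $X_R \setminus W$ of a small neighborhood of $\Sing(X_\infty) \cap X_R$; this is exactly the content of Proposition~\ref{prop:negativity}, and it is also exactly the situation Seidel treats in the quartic case. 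So the first step is to invoke Proposition~\ref{prop:negativity} to replace $\htilde_{\gamma_\infty}$ by an isotopic $\phitilde$ that is negative away from $W$, and then to run the argument of \cite[Section 9]{Seidel_K3} with $X_0$ in place of the quartic surface.

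Concretely, I would proceed as follows. First, given an arbitrary rational Lagrangian brane $\Ld$ in $X_0$, one may perturb so that $L$ lies in $X_R \setminus W$ (the neighborhood $W$ of $\Sing(X_\infty) \cap X_R$ can be taken arbitrarily small, and $\Sing(X_\infty)$ has real codimension at least $4$ in $X_0$ when $n = 3$, so it cannot obstruct an isotopy of a Lagrangian $3$-fold off $W$; this is the only place the hypothesis $n = 3$ enters, cf.~Remark~\ref{rm:generation}). Next, since $\phitilde$ is negative on $X_R \setminus W$, there is $d_0 > 0$ with $\alphatilde_{\phi^{d_0}} < 0$ there; iterating $\phi^{d_0}$ on $L$ and applying the long exact sequence for Dehn twists expresses $L$ (up to shift and in the split-closed derived category) as an iterated cone on the vanishing cycles $\Vtilde_i$, because each $\phi^{d_0}$ is a composition of Dehn twists along the $\Vtilde_i$ and negativity forces the relevant Floer-theoretic degree shifts to be strictly decreasing, so the process terminates. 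Finally, since the $\Vtilde_i$ are the objects of $\scF_q$, and $\scF_q^* = \scF_q \otimes_{\Lambda_\bN} \Lambda_\bQ$, this shows $\Ld \in D^\pi \scF_q^*$; the reverse inclusion $D^\pi \scF_q^* \subset D^\pi \scF(X_0)$ is immediate from the definition of $\scF_q$ as a full subcategory, giving the claimed equivalence $D^\pi \scF(X_0) \cong D^\pi \scF_q^*$.

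The main obstacle is the same one Seidel confronts: the monodromy is \emph{not} negative near $\Sing(X_\infty) \cap X_R$, so one cannot directly apply the clean statement "negative monodromy implies the vanishing cycles generate." The resolution — already in \cite[Section 9]{Seidel_K3} — is to observe that negativity on the open subset $X_R \setminus W$ suffices once the test object $L$ has been isotoped into that subset, which is possible precisely because of the codimension bound on $\Sing(X_\infty)$ and the dimension hypothesis $n = 3$. Thus the real work is (a) checking that Seidel's estimates on action and Maslov index go through with $3$-dimensional fibers, which they do since his arguments are dimension-independent, and (b) the isotopy-into-$X_R \setminus W$ step, which is where $n = 3$ is used; everything else is a transcription of \cite[Section 9]{Seidel_K3}.
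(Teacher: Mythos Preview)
Your proposal is correct and matches the paper's approach exactly: the paper's own proof is simply ``identical to \cite[Lemma 9.2]{Seidel_K3}, based on Seidel's long exact sequence,'' and you have correctly identified Proposition~\ref{prop:negativity} together with the $n=3$ dimension count (so that a Lagrangian 3-fold can be perturbed off the real 2-dimensional locus $\Sing(X_\infty)\cap X_0$) as the only inputs needed to transcribe Seidel's argument. One minor cross-reference slip: the dimension count you invoke is stated in Remark~\ref{rm:negativity}, not Remark~\ref{rm:generation}.
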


The proof is identical to that
of \cite[Lemma 9.2]{Seidel_K3},
which is based on Seidel's long exact sequence
\cite{Seidel_LES}
(cf.~also \cite[Section (9c)]{Seidel_K3}
and \cite{Oh_SLES}).

\begin{remark}[{cf.~\cite[Remark 9.3]{Seidel_K3}}]
 \label{rm:negativity}
If $n = 3$,
then the real dimension of $\Sing(X_\infty) \cap X_0$ is two,
so that any Lagrangian submanifold can be made disjoint
from a sufficiently small neighborhood $W$
of $\Sing(X_\infty) \cap X_0$
by a generic perturbation.
This is the only place where we use the condition $n = 3$,
and one can show the equivalence
\eqref{eq:hms} for any $n$
with $D^\pi \scF(X_0)$ replaced
by the split-closure of Lagrangian branes
which can be perturbed away from $\Sing(X_\infty) \cap X_0$.
\end{remark}

A notable feature of Floer cohomologies
over $\Lambda_0$
is their dependence
on Hamiltonian isotopy:
For a pair $(\Ls_0, \Ls_1)$
of Lagrangian submanifolds
equipped with auxiliary choices,
a symplectomorphism $\psi : X_0 \to X_0$
induces an isomorphism
$$
 \psi_* : (H^*(\Ls_i; \Lambda_0)) , \frakm_k)
  \to (H^*(\psi(\Ls_i); \Lambda_0)) , \frakm_k)
$$
of filtered $A_\infty$-algebras
\cite[Theorem A]{Fukaya-Oh-Ohta-Ono},
which induces a map $\psi_*$
on the set of bounding cochains
preserving the Floer cohomology over $\Lambda_0$
\cite[Theorem G.3]{Fukaya-Oh-Ohta-Ono}:
$$
 HF((\Ls_0, b_0), (\Ls_1, b_1); \Lambda_0)
  \cong HF((\psi(\Ls_0), \psi_*(b_0)),(\psi(\Ls_1), \psi_*(b_1))
   ;\Lambda_0).
$$
On the other hand,
if we move $\Ls_0$ and $\Ls_1$
by two distinct Hamiltonian isotopies
$\psi^0$ and $\psi^1$,
then the Floer cohomology over $\Lambda_\bQ$ is preserved
\cite[Theorem G.4]{Fukaya-Oh-Ohta-Ono}
$$
 HF((\Ls_0, b_0), (\Ls_1, b_1); \Lambda_\bQ)
  \cong HF((\psi^0(\Ls_0), \psi^0_*(b_0)),
   (\psi^1(\Ls_1), \psi^1_*(b_1))
   ;\Lambda_\bQ),
$$
whereas
the Floer cohomology over $\Lambda_0$ may not be preserved;
$$
 HF((\Ls_0, b_0), (\Ls_1, b_1); \Lambda_0)
  \not \cong HF((\psi^0(\Ls_0), \psi^0_*(b_0)),
   (\psi^1(\Ls_1), \psi^1_*(b_1))
   ;\Lambda_0).
$$
See \cite[Section 3.7.6]{Fukaya-Oh-Ohta-Ono}
for a simple example where this occurs.
This phenomenon is used
by Seidel \cite[Section (8g) and (11a)]{Seidel_K3}
to prove the following:

\begin{proposition}[{cf.~\cite[Proposition 11.1]{Seidel_K3}}]
 \label{pr:Fuk_non-trivial}
The $A_\infty$-algebra
$
 \Fqb \ofo
$
is not quasi-isomorphic
to the trivial deformation
$
 \scF_0 \otimes_\bC \fo.
$
\end{proposition}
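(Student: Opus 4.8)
The plan is to identify the coefficient of $q$ in the $A_\infty$-structure of $\Fqb$ with a class in the truncated Hochschild cohomology $HH^2(\scF_0,\scF_0)^{\le 0}$ classifying first-order deformations of $\scF_0$ (cf.~\cite[Section 3]{Seidel_K3}), and to show this class does not vanish; this is precisely the assertion that $\Fqb \ofo$ is not the trivial deformation $\scF_0 \otimes_\bC \fo$. I would argue by contradiction, following \cite[Sections (8g) and (11a)]{Seidel_K3}. If $\Fqb \ofo$ were quasi-isomorphic to $\scF_0 \otimes_\bC \fo$, then for every pair of vanishing cycles $\Vtilde_i,\Vtilde_j$ the Floer cohomology over $\fo$ would be the base change $HF(\Vtilde_i,\Vtilde_j;\bC)\otimes_\bC \fo$ of the Floer cohomology in the exact symplectic manifold $M_0 = X_0 \setminus X_\infty$ computed by $\scF_0$; in particular it would be a free $\fo$-module. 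The aim is to contradict this by exhibiting, in $\scF(X_0)$, a non-free Floer cohomology group --- i.e.~by detecting the $q$-torsion phenomenon that, as recalled just before the statement, makes Floer cohomology over $\Lambda_0$ depend on Hamiltonian isotopy.

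To locate this, I would use that $\omega_{X_0}$ restricts to an exact form on $M_0 = X_0\setminus X_\infty$ and that its class is represented by the divisor $X_\infty\cap X_0$, so that a holomorphic strip between vanishing cycles contributes to the Floer differential over $\Lambda_0$ a power $q^m$, where $m$ is its intersection number with $X_\infty\cap X_0$. Hence, modulo $q^2$, the differential on $CF(\Vtilde_i,\Vtilde_j;\fo)$ is the differential of $\scF_0$ plus $q$ times the signed count of strips meeting $X_\infty\cap X_0$ exactly once, and this count induces a degree-one map $\beta_{ij}\colon HF(\Vtilde_i,\Vtilde_j;\bC)\to HF(\Vtilde_i,\Vtilde_j;\bC)$, the $q$-Bockstein of the reduction modulo $q^2$. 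It then suffices to show $\beta_{ij}\ne 0$ for some $i,j$: this forces the first-order deformation class of $\Fqb$ to be non-zero, since $\beta_{ij}$ vanishes identically for the trivial deformation. A posteriori, together with the evident $\bZ/(n+2)$-equivariance of $\Fqb$ coming from the cyclic permutation of coordinates, the resulting first-order class must be the Fermat class $y_1^{n+2}+\cdots+y_{n+2}^{n+2}$, matching the defining equation of $Y_q$.

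The remaining and hardest step is to exhibit a pair of vanishing cycles and two intersection points that are non-zero in $HF(\cdot,\cdot;\bC)$ and joined by a once-crossing holomorphic strip of non-zero signed count --- equivalently, to show $\beta_{ij}\ne 0$. I would carry this out in a neighbourhood of $\Sing(X_\infty)\cap X_0$, where the pencil $\{X_z\}$ has a standard local normal form along each stratum of $X_\infty$ --- the same model that enters the proof of Proposition \ref{prop:negativity} --- and use it to write the relevant minimal-area holomorphic polygon explicitly, to verify its regularity, and to compute its contribution. Since this local model does not depend on the ambient dimension, I expect the disk counts of \cite[Sections (8g), (11a)]{Seidel_K3} to carry over essentially verbatim to the $(n+1)$-dimensional setting; the genuine content, and the main obstacle, is exactly this local computation together with the verification that the resulting first-order cocycle is not a Hochschild coboundary --- that is, that the deformation is non-trivial rather than a gauge equivalence.
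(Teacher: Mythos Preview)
Your strategy of detecting the first-order class via the Bockstein $\beta_{ij}$ on the Floer differential between vanishing cycles cannot succeed, and not merely because the local computation is left undone: the Bockstein vanishes for degree reasons. The cohomology algebra $H^*(\scF_0)\cong Q$ has each $\Hom(\Vtilde_i,\Vtilde_j)$ concentrated in a \emph{single} degree when $\Vtilde_i\ne\Vtilde_j$, and in degrees $0$ and $n$ when $\Vtilde_i=\Vtilde_j$; for $n\ge 2$ there are no adjacent non-zero degrees, so every degree-one map $\beta_{ij}\colon H^k\to H^{k+1}$ is identically zero. This is consistent with the Hochschild analysis in Section~\ref{sc:coherent_sheaves}: the deformation class $y_1^{n+2}+\cdots+y_{n+2}^{n+2}$ lies in $(S^{n+2}V^\vee)^{\Gammatilde}\subset HH^2$, i.e.\ at Hochschild length $n+2$, so in a minimal model it deforms $\frakm_{n+2}$ rather than $\frakm_1$. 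A non-trivial $A_\infty$-deformation need not create any torsion in the $\hom$-complexes between the generators; here it provably does not.

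The paper's argument---which is what \cite[Sections (8g), (11a)]{Seidel_K3} actually does, not a strip count on vanishing cycles---introduces auxiliary Lagrangian \emph{tori} $\Ld_0,\Ld_1$ in $X_z$ for large $z$, obtained by parallel transport from a moment-fiber torus $K_{1/2}$ in a coordinate hyperplane. These are exact and non-isomorphic in $M_z$, yet Hamiltonian isotopic in $X_z$, and bound no holomorphic disks of area $<2$ (controlled via Cho's classification \cite{Cho_CT}), so they define objects of $D^\pi\scF_q\ofo$. Non-triviality is then detected by $\frakm_2$: the product $H^0(\hom_{\scF_0}(\Ld_1,\Ld_0))\otimes H^0(\hom_{\scF_0}(\Ld_0,\Ld_1))\to H^0(\hom_{\scF_0}(\Ld_0,\Ld_0))$ vanishes, whereas the corresponding map over $\fo$ does not. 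One concludes via \cite[Lemma 3.9]{Seidel_K3}, which transfers this statement in the split-closure back to non-triviality of $\scF_q\ofo$ itself.
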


To show this,
Seidel takes a rational Lagrangian submanifold $L_{1/2}$
in $X_z$ for sufficiently large $z$
as follows:
\begin{enumerate}
 \item
Consider a pencil
$\{ X_z \}_{z \in \bP_\bC^1}$
generated by two section
$\sigma_{X, \infty} =  x_1 \cdots x_{n+2}$
and
$\sigma_{X,0} = x_1^2 (x_2^2 + x_3^2) x_4 \cdots x_{n+1}$,
whose general fiber is singular.
Let $C = \{ x_{n+2} = 0 \}$ be an irreducible component
of $X_\infty = \{ x_1 \cdots x_{n+2} = 0 \} \subset X$,
and $C_\infty = C \cap X_\infty$ be the intersection
with other components.
If we write $C_0 = X_0 \cap C$,
then the set $C_0 \setminus C_\infty$
is the union
of two $(n-1)$-planes
$\{ x_2 = \pm \sqrt{-1} x_3 \}$.
 \item
Let
$
 K_{1/2} = \{ 2 |x_1| = |x_2| = \cdots = |x_{n+2}| \}
  \subset C \setminus C_\infty
$
be a Lagrangian $n$-torus in $C$,
which is a fiber of the moment map
for the torus action.
The intersection $K_{1/2} \cap C_0$
consists of two $(n-1)$-tori.
 \item
Take a Hamiltonian function $H$ on $C$
supported on a neighborhood of the two $(n-1)$-tori
such that the corresponding Hamiltonian vector field
points in opposite directions
transversally to two $(n-1)$-tori.
By flowing $K_{1/2}$ along the Hamiltonian vector field
in both negative and positive time directions,
one obtains a family $(K_r)_{r \in [0,1]}$
of Lagrangian submanifolds of $C \setminus C_\infty$.
 \item
The Lagrangian submanifolds $K_r$
for $r \ne 1/2$ are disjoint from $C_0$.
They are exact Lagrangian submanifolds
with respect to the one-form $\theta_{C \setminus C_0}$
obtained by pulling back the connection on $o_X$
via $\sigma_{X, 0}|_{C \setminus C_0}$.
 \item
Now perform a generic perturbation of $\sigma_{X, 0}$
so that a general member $X_z$
of the pencil is smooth.
One still has a Lagrangian submanifold
$K_{1/2} \subset C \setminus C_\infty$
satisfying the following:
\begin{itemize}
 \item
$K_{1/2} \cap C_0$ consists of two $(n-1)$-tori.
 \item
By flowing $K_{1/2}$ along a Hamiltonian vector field,
one obtains a family $(K_r)_{r \in [0,1]}$
of Lagrangian submanifolds of $C \setminus C_\infty$.
 \item
$K_r$ for $r \ne 1/2$ are disjoint from $C_0$.
They are exact Lagrangian submanifolds
of $C \setminus C_0$.
\end{itemize}
 \item
By parallel transport along the graph
$$
 \Xhat = \{ (y, x) \in \bC \times X \mid
  \sigma_{X, \infty}(x) = y \sigma_{X, 0}(x) \}
 \xto{\text{$y$-projection}} \bC
$$
of the pencil,
one obtains a Lagrangian torus $L_{1/2}$
in $X_z$
for sufficiently large $z = 1/y$,
satisfying the following conditions:
\begin{itemize}
 \item
The intersection $Z = L_{1/2} \cap X_{z, \infty}$
of $L_{1/2} \cong (S^1)^n$
with the divisor $X_{z, \infty} = X_z \cap X_\infty$ at infinity
is a smooth $(n-1)$-dimensional manifold
disjoint from $\Sing(X_{\infty}) \cap X_z$.
(In fact, it is a disjoint union of two $(n-1)$-tori;
$
 Z = \{ 1/4, 3/4 \} \times (S^1)^{n-1}.
$)
 \item
By flowing $L_{1/2}$ by a Hamiltonian vector field,
one obtains a family $(L_r)_{r \in [0,1]}$
of Lagrangian submanifolds of $X_z$.
 \item
$L_r$ for any $r \in [0,1]$ admits a grading.
 \item
$L_r$ for $r \ne 1/2$
are disjoint from $X_{z, \infty}$.
They are exact Lagrangian submanifolds
in the affine part $M_z = X_z \setminus X_{z, \infty}$ of $X_z$.
\end{itemize}
\end{enumerate}

If the perturbation of $\sigma_{X,0}$ is generic,
then there are no non-constant stable holomorphic disks
in $X_z$ bounded by $L_r$ for $r \in [0,1]$
with area less than $2$.
Indeed, such a disk cannot have a sphere component
since a holomorphic sphere has area at least $n+2$.
If a holomorphic disk exists in $X_z$ for all sufficiently large $z$,
then Gromov compactness theorem gives
a holomorphic disk in $X_\infty$ bounded by $K_r$.
This disk either have sphere components
in irreducible components of $X_\infty$ other than $C$,
or passes through $C_\infty \cap C_0$.
The former is impossible
since sphere components have area at least $n+2$,
and the latter is impossible
for a disk of area less than 2
since such disks have fixed intersection points
with $C_\infty$
by classification
\cite[Theorem 10.1]{Cho_CT}
of holomorphic disks in $C$
bounded by $K_r$.

The absence of holomorphic disks
of area less than 2
shows that the Lagrangian submanifolds
$\Ld_0 = (\Ls_0, 0)$ and $\Ld_1 = (\Ls_1, 0)$
equipped with auxiliary data
and the zero bounding cochains
give objects
of the first order Fukaya category
$
 D^\pi \scF_q \ofo.
$
Now the argument of Seidel
\cite[Section (8g)]{Seidel_K3} shows
the following:
\begin{enumerate}
 \item
The spaces
$H^0(\hom_{\scF_0}(\Ld_i, \Ld_j))$
are one-dimensional for $0 \le i \le j \le 1$.
 \item
The product
$$
 H^0(\hom_{\scF_0}(\Ld_1, \Ld_0))
  \otimes H^0(\hom_{\scF_0}(\Ld_0, \Ld_1))
  \to H^0(\hom_{\scF_0}(\Ld_0, \Ld_0))
$$
vanishes.
 \item
The map
$$
\begin{CD}
 H^0(\hom_{\scF_q}(\Ld_1, \Ld_0) \ofo)
  \otimes_\bC
 H^0(\hom_{\scF_q}(\Ld_0, \Ld_1) \ofo) \\
 @VVV \\
 H^0(\hom_{\scF_q}(\Ld_0, \Ld_0) \ofo)
\end{CD}
$$
induced by $\frakm_2^{\scF_q}$ is non-zero.
\end{enumerate}
The point is that $L_0$ and $L_1$ are
exact Lagrangian submanifolds of $M_z$,
which are not isomorphic in $\scF(M_z)$,
but are Hamiltonian isotopic in $X_z$,
so that they are isomorphic in
$
 D^\pi(\scF_q \otimes_{\Lambda_\bN} \Lambda_\bZ).
$
Now \cite[Lemma 3.9]{Seidel_K3} concludes
the proof of Proposition \ref{pr:Fuk_non-trivial}.

The symplectomorphism
$
 \phibar_0 : \Mbar_0 \to \Mbar_0
$
sending
$
 (u_1, \ldots, u_{n+2})
$
to
$
 (u_2, \ldots, u_{n+2}, u_1)
$
lifts to a $\bZ / (n+2)$-action on $\scF_q$
just as in \cite[Section (11b)]{Seidel_K3}.
It follows that $\scF_q$ satisfies
all the properties
characterizing $\scS_q$
in Proposition \ref{pr:characterize_S2},
and one obtains the following;

\begin{proposition} \label{pr:hms_projective}
$\scF_q$ is quasi-isomorphic to $\psi^* \scS_q$
for some $\psi \in \End(\Lambda_\bN)^\times$.
\end{proposition}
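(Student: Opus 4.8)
The plan is to exhibit $\scF_q$ as a one-parameter deformation of $\scS_0 = \scS_{n+2} \rtimes \Gamma$ over $\Lambda_\bN$ and then invoke the characterization of $\scS_q$ in Proposition \ref{pr:characterize_S2}. First I would check that $\scF_q$ is a deformation in the required sense. After a transverse perturbation of the vanishing cycles $\Vtilde_i$, the morphism spaces of $\scF_q$ are finitely generated free $\Lambda_\bN$-modules, so $\scF_q$ is flat over $\Lambda_\bN$; and since the $A_\infty$ operations of $\scF_q$ are power series in $q$ weighted by symplectic area, reduction modulo $q$ kills all contributions of nonconstant holomorphic polygons and recovers the exact Fukaya subcategory $\scF_0$ of $M_0$. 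By Corollary \ref{cr:affine_hms}, $\scF_0$ is quasi-isomorphic to $\scS_0$, so $\scF_q$ is a one-parameter deformation of $\scS_0$; its construction genuinely descends to $\Lambda_\bN$ precisely because the vanishing cycles are taken with trivial flat bundles, canonical gradings and zero bounding cochains.

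Next I would verify the two hypotheses of Proposition \ref{pr:characterize_S2}. For $\bZ / (n+2) \bZ$-equivariance, the symplectomorphism $\phibar_0$ of $\Mbar_0$ cyclically permutes $(u_1, \dots, u_{n+2})$ and hence the vanishing cycles, so it lifts to a $\bZ / (n+2) \bZ$-action on $\scF_q$ which, under Corollary \ref{cr:affine_hms}, is compatible with the action of $\scU_0$ on $\scS_0$, exactly as in \cite[Section (11b)]{Seidel_K3}. For non-triviality at first order, the deformation $\scF_q \ofo$ is not quasi-isomorphic to the trivial deformation $\scF_0 \otimes_\bC \fo$ by Proposition \ref{pr:Fuk_non-trivial}, so its first-order class in $HH^2(\scS_0, \scS_0)^{\le 0}$ is nonzero. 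Proposition \ref{pr:characterize_S2} then yields a quasi-isomorphism $\scF_q \simeq \psi^* \scS_q$ for some $\psi \in \End(\Lambda_\bN)^\times$, which is the assertion.

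I expect the main obstacle to be the bookkeeping of the $\bZ / (n+2) \bZ$-equivariant structures: one must know both that $\phibar_0$ lifts to a homotopy-coherent $\bZ / (n+2) \bZ$-action on the $A_\infty$-algebra $\scF_q$ and that the quasi-isomorphism $\scF_0 \simeq \scS_0$ underlying Corollary \ref{cr:affine_hms} can be arranged equivariantly, so that the first-order class of $\scF_q$ is identified with an element of the one-dimensional space $HH^2(\scS_0, \scS_0)^{\le 0, \, \bZ / (n+2) \bZ}$ and the reparametrization freedom in Proposition \ref{pr:characterize_S2} absorbs all higher-order ambiguity. This is where one leans hardest on the framework of \cite{Seidel_K3} together with Propositions \ref{pr:sheridan} and \ref{prop:sheridan_vc}; the remaining steps are either formal consequences of the deformation-theoretic input already assembled or are quoted verbatim from Seidel's treatment of the quartic.
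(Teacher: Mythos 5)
Your proposal is correct and follows the paper's own route exactly: exhibit $\scF_q$ as a one-parameter deformation of $\scF_0 \cong \scS_0$ (Corollary \ref{cr:affine_hms}), use the cyclic symplectomorphism $\phibar_0$ to get the $\bZ/(n+2)\bZ$-equivariance as in Seidel's Section (11b), use Proposition \ref{pr:Fuk_non-trivial} for first-order non-triviality, and conclude by Proposition \ref{pr:characterize_S2}. The additional remarks on flatness over $\Lambda_\bN$ and on the equivariance bookkeeping correctly identify where the argument leans on Seidel's framework.
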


Theorem \ref{th:hms} follows
from Lemma \ref{lm:coh_generation},
Lemma \ref{lm:Fuk_generation},
and Proposition \ref{pr:hms_projective}.

\begin{remark} \label{rm:generation}
Since the Lagrangian torus used
in the proof of Proposition \ref{pr:Fuk_non-trivial}
does not intersect with $\Sing(X_\infty)$,
the proof of Proposition \ref{pr:Fuk_non-trivial}
(and hence Proposition \ref{pr:hms_projective})
works for any $n$.
Then the argument of Sheridan
\cite[Section 8.2]{Sheridan_CY},
based on a split-generation criterion
announced by Abouzaid, Fukaya, Oh, Ohta, and Ono,
shows that $\{ L_i \}_{i=1}^{n+2}$ split-generates
$D^\pi \scF(X_0)$
for any $n$.
\end{remark}

\section{Negativity of monodromy}
 \label{sc:negativity}

In this section,
we prove Proposition \ref{prop:negativity}
by using local models of the quasi-Lefschetz pencil $\{X_z\}$
along the lines of \cite[Section 7]{Seidel_K3}.
In the case where $\dim X_z \ge 3$, we need
\cite[Assumption 7.8]{Seidel_K3} and a generalization of
\cite[Assumption 7.5]{Seidel_K3}.

\begin{ass}[{\cite[Assumption 7.8]{Seidel_K3}}] \label{ass:local_model1}
Let $n \ge 2$ and $2 \le k \le n+1$.
\begin{itemize}
  \item $Y \subset \bC^{n+1} = \bR^{2n+2}$ is an open ball around the origin
        equipped with the standard symplectic form $\omega_Y$ and
        the $T^k$-action 
        \[
          \rho_s(y) = (e^{\ii s_1}y_1, \dots , e^{\ii s_k}y_k,
                       y_{k+1}, \dots , y_{n+1})
        \]
        with moment map $\mu : Y \to \bR^k$.
        For any regular value $r \in \bR^k$ of $\mu$,
        the symplectic reduction 
        $Y^{\red} = Y^{\red, r} = \mu^{-1}(r)/T^k$ 
        can be identified with an open subset in $\bC^{n+1-k}$ 
        equipped with the standard symplectic form.
  \item $J_Y$ is a complex structure on $Y$ which is tamed by $\omega_Y$.
        At the origin, it is $\omega_Y$-compatible and $T^k$-invariant.
  \item $p:Y \to \bC$ is a $J_Y$-holomorphic function with the
        following properties:
        \begin{enumerate}
        \renewcommand{\labelenumi}{$(\roman{enumi})$}
          \item $p( \rho_s(y) ) 
                 = e^{\ii(s_1  + \dots + s_k)} p(y)$.
          \item $\partial_{y_1} \dots \partial_{y_k}p$ is nonzero at $y=0$.
        \end{enumerate}
  \item $\eta_Y$ is a $J_Y$-complex volume form on 
        $Y \backslash p^{-1}(0)$ such that $p(y) \eta_Y$
        extends smoothly on $Y$, which is nonzero at $y=0$.
\end{itemize}
\end{ass}

In this situation, the monodromy $h_{\zeta}$ 
satisfy the following:

\begin{prop}[{\cite[Lemma 7.16]{Seidel_K3}}]
For every $d>0$ and $\epsilon >0$, there exists $\delta >0$ such that 
the following holds.
For every $y \in Y_{\zeta} = p^{-1}(\zeta)$ with $0< \zeta < \delta$ 
and $\|y\| <\delta$, and every Lagrangian subspace 
$\Lambda^v \subset T_yY_{\zeta}$, the $d$-fold monodromy $h_{\zeta}^d$
is well-defined near $y$, and satisfies
\[
    \tilde{\alpha}_{h_{\zeta}^d} (\Lambda^v) \le -2d 
    + n + 1 + \epsilon.
\]
\end{prop}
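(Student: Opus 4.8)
The statement is \cite[Lemma 7.16]{Seidel_K3} specialized to the local model of Assumption \ref{ass:local_model1} (which is Seidel's Assumption~7.8), so the plan is to recall the three stages of his argument and note that they go through unchanged for general $k$. The claim is purely local near the origin of $Y$, so no global input is needed.

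\emph{Normal form.} First I would use properties (i) and (ii) to put $p$ into a toric normal form. Equivariance (i) forces $p$ to be divisible by $y_1 \cdots y_k$, and (ii) says the quotient is a non-vanishing $T^k$-invariant holomorphic unit; absorbing this unit by a $T^k$-equivariant holomorphic change of coordinates reduces to $p(y) = y_1 \cdots y_k$. Simultaneously one normalizes $\omega_Y$ to the standard form and $\eta_Y$ to $\dfrac{dy_1 \wedge \cdots \wedge dy_{n+1}}{y_1 \cdots y_k}$, each up to an error that vanishes at the origin. Since $J_Y$ is only $\omega_Y$-tamed (not compatible) away from $0$, and these normalizations hold only to leading order, the reduction is not exact; one records that all the error terms are $O(\|y\|)$, uniformly in $\zeta$, so that they can be absorbed into the final $\epsilon$ by shrinking $\delta$.

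\emph{Explicit monodromy.} In the normal-form model I would compute the symplectic parallel transport of $p$ around a small circle about $0 \in \bC$: the $\omega_Y$-orthogonal horizontal lift of the rotational vector field on the base is an explicit convex combination of the vector fields generating the $T^k$-action, with coefficients that are functions of the moment map $\mu$ alone. Since $\mu$ and the spectator coordinates $y_{k+1}, \dots, y_{n+1}$ are preserved, the $d$-fold monodromy takes the explicit form
\[
 h_\zeta^d(y) = (e^{\ii \phi_1} y_1, \dots, e^{\ii \phi_k} y_k, y_{k+1}, \dots, y_{n+1}), \qquad \sum_{j=1}^k \phi_j = 2 \pi d ,
\]
with each $\phi_j$ depending only on $\mu(y)$; more precisely the time-$t$ monodromy is obtained by replacing $\phi_j$ with $t\,\vartheta_j(\mu)$, where $\sum_j \vartheta_j = 2\pi$.

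\emph{Phase estimate.} Finally I would identify $\tilde\alpha_{h_\zeta^d}(\Lambda^v)$ with the total variation of the phase $\alpha_{Y_\zeta}$ along the path $t \mapsto (h_\zeta^t)_* \Lambda^v$, $t \in [0,d]$, and evaluate it using the explicit fibrewise volume form $\eta_{Y_\zeta}$ (the residue of $\eta_Y$, with logarithmic poles along the toric boundary divisors). Splitting $dh_\zeta^t$ into its $\bC$-linear diagonal part and a finite-rank shear built from the differentials $d\vartheta_j$, the winding accumulated along $[0,d]$ — forced by the canonical grading of the monodromy — produces the principal term $-2d$, while the shear and a dimension count contribute at most $n + 1$. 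The main obstacle is precisely this last estimate: one must show the bound is \emph{strict} and uniform as $\zeta \to 0$, where the derivatives $d\vartheta_j$ blow up along the degenerating fibre (this is handled by a sign analysis using the monotonicity of the $\vartheta_j$ in $\mu$ and the definite way the shear moves Lagrangian subspaces), and then check that it is stable under the tamed complex structure $J_Y$ and the $O(\|y\|)$ corrections left over from the normal form, the strict slack being what becomes $+\epsilon$. This is exactly what is carried out in \cite[Section 7]{Seidel_K3}, and the argument applies verbatim with general $k$ in place of the value used there.
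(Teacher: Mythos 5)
The paper does not actually prove this proposition: it is quoted verbatim as Seidel's Lemma 7.16, valid under his Assumption 7.8 (= Assumption \ref{ass:local_model1}), and the only negativity estimate proved in Section \ref{sc:negativity} is the companion statement for the meromorphic local model (Proposition \ref{Seidel7.16}). So the right benchmark for your sketch is Seidel's argument, which the paper reproduces almost verbatim for that other model. Measured against it, your outline has the correct skeleton: compare the given data with constant model data, identify the monodromy as a reparametrized Hamiltonian flow of $-\frac{1}{2}|p|^2$ moving along torus orbits, get $-2d$ from the base direction and $n+1$ from passing from a split Lagrangian subspace to an arbitrary one.

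Two steps, however, would not survive being made precise. First, there is no ``$T^k$-equivariant holomorphic change of coordinates'' reducing $p$ to $y_1\cdots y_k$: equivariance only gives $p=y_1\cdots y_k\,q(|y_1|^2/2,\dots,|y_k|^2/2,y_{k+1},\dots,y_{n+1})$ with $q$ merely smooth, and $J_Y$ is only tamed away from the origin, so the comparison has to be made at the level of Hamiltonian vector fields ($X$ versus the model $X'$) and phase functions ($\alpha_Y$ versus $\alpha_Y'$), exactly as in Lemmas \ref{lem:7.12}--\ref{lem:7.14}. Second, ``all error terms are $O(\|y\|)$ and can be absorbed into $\epsilon$'' is not sufficient: the $d$-fold monodromy is $\phi_{g_{2\pi d}(y)}$ with flow time of order $|\zeta|^{-2}$, so pointwise-small errors are integrated over an unboundedly long time as $\zeta\to 0$; controlling that integral is the actual crux (Seidel's Lemmas 7.13--7.15 and their analogues here), and your sketch defers it back to the citation. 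Finally, the $-2d$ does not come from ``the canonical grading of the monodromy'' but from the normalization $\eta_{Y_\zeta}=\eta_Y/(d\zeta/\zeta^2)$ of the fibrewise volume form, which yields $\alpha_{\psi_t|_{Y_\zeta}}(\Lambda^v)=e^{-2t}\alpha_{\psi_t}(\Lambda)$ and hence exactly $-2d$ after $d$ loops.
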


The other local model is the following:

\begin{ass} \label{ass:local_model}
Let $n \ge 2$ and $2 \le k \le n+1$.
\begin{itemize}
  \item $Y \subset \bC^{n+1} = \bR^{2n+2}$ is an open ball around the origin
        equipped with the standard symplectic form $\omega_Y$ and
        the $T^k$-action 
        \begin{equation}
          \rho_s(y) = (e^{\ii s_1}y_1, \dots , e^{\ii s_k}y_k,
                       y_{k+1}, \dots , y_{n+1})
        \label{eq:torus_action}
        \end{equation}
        with moment map $\mu : Y \to \bR^k$.
        For any regular value $r \in \bR^k$ of $\mu$,
        the symplectic reduction 
        $Y^{\red} = Y^{\red, r} = \mu^{-1}(r)/T^k$ 
        can be identified with an open subset in $\bC^{n+1-k}$ 
        equipped with the standard symplectic form.
  \item $J_Y$ is a complex structure on $Y$ which is tamed by $\omega_Y$.
        At the origin, it is $\omega_Y$-compatible and $T^k$-invariant.
  \item $p$ is a $J_Y$-meromorphic function on $Y$ satisfying
         the following two conditions:
        \begin{enumerate}
        \renewcommand{\labelenumi}{$(\roman{enumi})$}
          \item $p( \rho_s(y) ) 
                 = e^{\ii(-s_1 + s_2 + \dots + s_k)} p(y)$.
        \end{enumerate}
        This implies that $p$ can be written as
        \[
          p(y) = \frac{y_2 \dots y_k}{y_1} 
                 q(|y_1|^2/2, \dots , |y_k|^2/2, y_{k+1}, \dots, y_{n+1})
        \]
        for some $q$.
        \begin{enumerate}
        \setcounter{enumi}{1}
        \renewcommand{\labelenumi}{$(\roman{enumi})$}
          \item $q$ is a smooth function defined on $Y$, $q(0) = 1$, and 
          $q(y) \ne 0$ for any $y \in Y$.
        \end{enumerate}
  \item $\eta_Y$ is a $J_Y$-complex volume form on 
        $Y \backslash p^{-1}(0)$ such that $y_2 \dots y_k \eta_Y$
        extends smoothly on $Y$.
        It is normalized so that
        $y_2 \dots y_k \eta_Y = dy_1 \wedge \dots \wedge dy_{n+1}$ at $y=0$.
\end{itemize}
\end{ass}

In this setting, we will show the negativity of the monodromy 
in the following sense:

\begin{prop}[cf. {\cite[Lemma 7.16]{Seidel_K3}}] \label{Seidel7.16}
  For any $d > 0$ and $\epsilon > 0$, 
  there is $\delta_1 > \delta_2 > 0$ 
  such that for $\zeta \in \bC$ with $0<|\zeta| < \delta_1$ and
  $y \in Y_{\zeta}$ with $\|y\| < \delta_1$ and $|y_1| > \delta_2$,
  the $d$-fold monodromy $h_{\zeta}^d$ is well-defined, and
  \[
    \tilde{\alpha}_{h_{\zeta}^d} (\Lambda^v) \le -2d 
    \frac{1}{1+ |\zeta|^2/|y_3|^{2(k-1)}} + n+1 + \epsilon
  \]
  for all $\Lambda^v \in Y_{\zeta}$, provided
 $|y_2| \le |y_3| \le \dots \le |y_k|$.
\end{prop}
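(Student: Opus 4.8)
The plan is to follow the local analysis of \cite[Section 7]{Seidel_K3}; the one genuinely new feature is the pole of $p$ along $\{y_1=0\}$, and the hypothesis $|y_1|>\delta_2$ is exactly what confines the monodromy to the region where that pole is harmless. First I would make the parallel transport explicit using the $T^k$-symmetry. Writing $X_1,\dots,X_k$ for the Hamiltonian fields of the rotations $\rho_s$, so that $\iota_{X_j}\omega_Y=-d\mu_j$ with $\mu_j=|y_j|^2/2$, condition (i) of Assumption~\ref{ass:local_model} gives $dp(X_1)=-\ii p$ and $dp(X_j)=\ii p$ for $2\le j\le k$. The horizontal lift of the angular field $\ii p\,\partial_p$ along the circle $\{|p|=|\zeta|\}$ then has the form $\sum_{j=1}^k a_jX_j+v^{\red}$, with $v^{\red}$ in the symplectic reduction directions; the normalisation $-a_1+a_2+\dots+a_k=1$ together with symplectic orthogonality to the vertical distribution force, in the exact model, $a_j=\lambda/|y_j|^2$ for $j\ge 2$, $a_1=-\lambda/|y_1|^2$, with $\lambda=(\sum_{j=1}^k|y_j|^{-2})^{-1}$. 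Thus the monodromy rotates fastest in the coordinates of smallest modulus, which is what will drive the estimate.

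A cleaner way to organise the computation, and the one I would actually use, is to symplectically reduce $Y$ by the diagonal circle acting on $(y_1,y_2)$, under which $p$ is invariant (the weights $-1$ and $+1$ cancel) and whose moment map $\mu_1+\mu_2$ is bounded below by $\delta_2^2/2>0$ throughout the region. On the reduced space the invariant $\xi=\bar y_1y_2$ replaces the pair $(y_1,y_2)$, $|y_1|^2$ becomes a smooth nonvanishing function of $|\xi|^2$ and the reduction level, and $p$ descends to $\xi\,y_3\cdots y_k\,\tilde q$ with $\tilde q$ smooth and nowhere zero; near the region in question $p\,\eta_Y$ also extends smoothly, since $y_2\cdots y_k\,\eta_Y$ does and $y_1,q$ are invertible there. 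This is precisely a local model of the type of \cite[Assumption 7.8]{Seidel_K3} in the $k-1$ positive-weight coordinates $(\xi,y_3,\dots,y_k)$, with no pole and with the complex dimension dropped by one. Consequently $h_\zeta^d$ is the lift, through the reduction circle, of the $d'$-fold monodromy of this holomorphic model — to which the preceding Proposition (the holomorphic-$p$ case) applies, giving a bound $\le -2d'+n+\epsilon$ — composed with an explicit rotation in the reduction circle that restores the remaining $+1$ of the term $n+1$, since $\eta_Y$ carries weight one under that circle.

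Well-definedness of $h_\zeta^d$ in the stated region follows by taking $\delta_1$ small, so that $J_Y$, $\eta_Y$ and $q$ are uniformly $C^1$-close to their values at the origin and the symplectic connection is a small perturbation of the flat model connection; the approximately torus-rotational parallel transport around $\{|p|=|\zeta|\}$ then nearly preserves each $|y_j|^2/2$, the condition $p=\zeta\ne 0$ rules out $\{y_2\cdots y_k=0\}$, and $|y_1|$ stays comparable to its initial value $>\delta_2$, so the flow remains in a compact part of $Y\setminus p^{-1}(0)$ bounded away from $\{y_1=0\}$; here $\delta_2$ is chosen after $\delta_1$, depending on $d$. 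For the phase, $\tilde\alpha_{h_\zeta^t}(\Lambda^v)$ for $t\in[0,2\pi d]$ is the integral of twice the logarithmic derivative of the argument of the fibrewise volume form $\eta_Y/dp$ transported against a fixed trivialisation; in the exact model this splits into the reduced holomorphic contribution $\le -2d'+n+\epsilon$ and the reduction-circle rotation contributing $+1$. Matching the effective winding number $d'$ against the weights $a_j=\lambda/|y_j|^2$, the ordering $|y_2|\le\dots\le|y_k|$, and the fibre constraint $|y_2\cdots y_k|\asymp|\zeta|\,|y_1|$ yields $d'=d\bigl(1+|\zeta|^2/|y_3|^{2(k-1)}\bigr)^{-1}$, which is the source of the correction factor; the discrepancies from $q\not\equiv 1$ and from $J_Y$ being non-standard away from the origin are absorbed into $\epsilon$ once $\delta_1$ is small.

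The main obstacle I anticipate is the bookkeeping in this last step: extracting the sharp factor $\bigl(1+|\zeta|^2/|y_3|^{2(k-1)}\bigr)^{-1}$, rather than a crude bound $\le 1$, requires pinning down exactly how the holonomy of $p$ around $\zeta$ divides between the reduced monodromy and the rotation in the reduction circle, and then how each contributes to the winding of $\eta_Y/dp$ on a vertical Lagrangian $\Lambda^v\subset T_yY_\zeta$; one must also verify that the errors coming from the non-exactness of the local model — the function $q$, the non-integrability of $J_Y$ away from the origin, and the deviation of the symplectic connection from the flat one — are genuinely uniformly $O(\epsilon)$ over the whole region $0<|\zeta|<\delta_1$, $\|y\|<\delta_1$, $|y_1|>\delta_2$. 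Establishing well-definedness of $h_\zeta^d$ uniformly in $\zeta$, while routine in spirit once the approximate torus symmetry of the parallel transport is in hand, is a second point requiring care, handled exactly as in \cite[Section 7]{Seidel_K3}.
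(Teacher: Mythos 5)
Your opening paragraph is on the right track: the weights $a_1=-\lambda/|y_1|^2$, $a_j=\lambda/|y_j|^2$ for $j\ge 2$ of the model horizontal lift are exactly the content of the vector field $X'$ in the paper's Lemma \ref{lem:7.14}, and you correctly identify why $|y_1|>\delta_2$ keeps the pole harmless. But the route you then take --- reducing by the diagonal circle on $(y_1,y_2)$ so as to quote the holomorphic-model proposition on the quotient --- has genuine gaps. First, the accounting ``reduced holomorphic contribution $\le -2d'+n+\epsilon$ plus $+1$ from the reduction circle because $\eta_Y$ has weight one under it'' is not justified: the phase contribution of a rotation in that circle is proportional to the total rotation angle accumulated over the $d$-fold monodromy (which grows with $d$ and depends on $|y_1|,|y_2|$), not an additive constant; and the $n+1$ in these estimates actually arises as the universal bound $|\tilde{\alpha}'_{\phi_t}(\Lambda_1)-\tilde{\alpha}'_{\phi_t}(\Lambda)|<n+1$ comparing an arbitrary Lagrangian plane with the distinguished torus-invariant one (\cite[Lemma 6.11]{Seidel_K3}), which your splitting does not supply for arbitrary $\Lambda^v$. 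Second, the step you yourself flag as the main obstacle --- ``matching the effective winding number yields $d'=d(1+|\zeta|^2/|y_3|^{2(k-1)})^{-1}$'' --- is precisely the theorem and is asserted rather than proved; it is not even an equality but an inequality obtained only after using $|p(y)|=|\zeta|$ and the ordering $|y_2|\le\cdots\le|y_k|$. There are also technical problems with the reduction itself: $\xi=\bar{y}_1y_2$ is not holomorphic and does not globally coordinatize the reduced space of the diagonal circle (the map is two-to-one away from $|y_1|=|y_2|$), and the holomorphic-model proposition applies to points near the origin of that model, whereas your reduced model sits at moment level bounded below by $\delta_2^2/2$.

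The paper avoids the reduction altogether and proves the estimate directly. One compares $H=-\frac{1}{2}|p|^2$ with the exact model $H'=-\frac{1}{2}|y_2\cdots y_k/y_1|^2$ and computes the variation of the phase of the flat volume form $\eta'_Y=dy_1\wedge\frac{dy_2}{y_2}\wedge\cdots\wedge\frac{dy_k}{y_k}\wedge\eta'_{Y^{\red}}$ along the flow, evaluated on the distinguished Lagrangian $\Lambda=\ii y_1\bR\oplus\cdots\oplus\ii y_k\bR\oplus\Lambda^{\red}$; the only surviving term is $\frac{2t}{2\pi}\bigl(1+|y_1|^2/|y_2|^2+\cdots+|y_1|^2/|y_k|^2\bigr)^{-1}$, the reduced flow and the error $X-X'$ being negligible by Lemma \ref{lem:7.13} and the estimate \eqref{eq:est_X}. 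Arbitrary Lagrangians cost $n+1$, the identity $\alpha_{\psi_t|_{Y_\zeta}}(\Lambda^v)=e^{-2t}\alpha_{\psi_t}(\Lambda)$ produces the $-2d$, and the stated factor comes from rewriting $\bigl(1+\sum_{j\ge2}|y_1|^2/|y_j|^2\bigr)^{-1}-1$ and bounding it using $|p(y)|=|\zeta|$ and the ordering hypothesis. To repair your outline, replace the reduction step by this direct computation on the torus-invariant Lagrangian.
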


Note that
\[
  \frac{1}{1+ |\zeta|^2/|y_3|^{2(k-1)}}
\]
is uniformly bounded from above on the complement of 
a neighborhood of $y_2 = y_3 = 0$.

Let $J'_Y$ be the constant complex structure on $Y$
which coincides with $J_Y$ at the origin,
and $\eta'_Y$ be the constant $J'_Y$-complex volume form
given by
\[
  \eta'_Y = dy_1 \wedge \frac{dy_2}{y_2} \wedge \dots \wedge \frac{dy_k}{y_k}
            \wedge \eta'_{Y^{\red}} 
\]
for some $\eta'_{Y^{\red}}$.
The phase functions corresponding to $\eta_Y$ and $\eta'_Y$ are denoted by
$\alpha_Y$ and $\alpha'_Y$ respectively.
The proof of the following lemma is parallel
to that in \cite{Seidel_K3}:

\begin{lem}[cf. {\cite[Lemma 7.12]{Seidel_K3}}] \label{lem:7.12}
  For any $\epsilon > 0$, there exists $\delta > 0$ such that 
  if $\|y\| < \delta$ and $p(y) \ne 0$ then 
  \[
    \left| \frac 1{2\pi} \arg (\alpha_Y(\Lambda) / \alpha'_Y(\Lambda)) \right|
    < \epsilon
  \]
  for all $\Lambda \in \frakL_{Y,y}$.
\end{lem}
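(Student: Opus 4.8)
The statement to prove is Lemma~\ref{lem:7.12}: the phase functions $\alpha_Y$ and $\alpha'_Y$ attached to $\eta_Y$ and $\eta'_Y$ differ by an arbitrarily small argument near the origin. The plan is to reduce the comparison of the two phase functions to the comparison of the two volume forms, and then to show the ratio of volume forms is close to~$1$ near the origin. Concretely, for a Lagrangian subspace $\Lambda=\vspan_\bR\{e_1,\dots,e_n\}\subset T_yY_\zeta$, recall
\[
  \alpha_Y(\Lambda) = \frac{\eta_Y(e_1\wedge\cdots\wedge e_n)^2}{|\eta_Y(e_1\wedge\cdots\wedge e_n)|^2},
\]
and similarly for $\alpha'_Y$ (with $\eta_Y$ here meaning the induced volume form on the fiber $Y_\zeta=p^{-1}(\zeta)$, i.e. the residue of $\eta_Y/(p-\zeta)$ or the analogous contraction). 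Since $\eta_Y$ and $\eta'_Y$ are both nonvanishing $(n+1)$-forms on a neighborhood of the origin (after clearing the pole: $y_2\cdots y_k\,\eta_Y$ extends smoothly and is nonzero, and $\eta'_Y$ has the same polar part $dy_2/y_2\wedge\cdots\wedge dy_k/y_k$ by construction), their ratio is a smooth nonvanishing function $f(y)=\eta_Y/\eta'_Y$ on a punctured neighborhood that extends smoothly across $p^{-1}(0)$, with $f(0)=1$ because of the normalization $y_2\cdots y_k\,\eta_Y=dy_1\wedge\cdots\wedge dy_{n+1}$ at $y=0$ matching $\eta'_Y$'s leading term.

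\textbf{Key steps.} First I would pass from the ambient volume forms to the induced forms on the fibers $Y_\zeta$: both $\eta_Y$ and $\eta'_Y$ induce holomorphic/complex volume forms on $Y_\zeta$ away from $p^{-1}(0)$ by the usual adjunction (contract with a vector field transverse to the fibers, or divide by $dp$), and since the transition is the same for both, the ratio of the induced fiberwise forms is still $f(y)$, evaluated on $Y_\zeta$. Second, observe that for any Lagrangian $\Lambda\in\frakL_{Y_\zeta,y}$ one has
\[
  \frac{\alpha_Y(\Lambda)}{\alpha'_Y(\Lambda)}
  = \frac{f(y)^2}{|f(y)|^2},
\]
because the two volume forms differ by the \emph{scalar} $f(y)$ on the entire fiber of $\Lambda^n T^*Y_\zeta$ at $y$, so the $\Lambda$-dependence cancels in the quotient. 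Third, since $f$ is smooth with $f(0)=1$, continuity gives: for every $\epsilon>0$ there is $\delta>0$ with $\|y\|<\delta\Rightarrow |\arg f(y)|<2\pi\epsilon$, hence $\bigl|\tfrac1{2\pi}\arg(f(y)^2/|f(y)|^2)\bigr| = \bigl|\tfrac1{2\pi}\cdot 2\arg f(y)\bigr|<2\epsilon$; absorbing the factor~$2$ into the choice of $\epsilon$ completes the proof. The residue/smoothness bookkeeping — checking that $f$ really extends smoothly and nonvanishingly across $p^{-1}(0)$ and across the locus $y_2\cdots y_k=0$ where the polar parts live — is the one place requiring care, since $p$ is meromorphic and $\eta_Y$ has a prescribed pole; this is exactly the content of the last bullet of Assumption~\ref{ass:local_model}, and one invokes it to see that $y_2\cdots y_k\,\eta_Y$ and $y_2\cdots y_k\,\eta'_Y=dy_1\wedge dy_2\wedge\cdots\wedge dy_k\wedge\eta'_{Y^\red}$ are both smooth nonvanishing, so their ratio is.

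\textbf{Main obstacle.} The only genuinely delicate point is that $\eta_Y$ is not globally defined — it lives on $Y\setminus p^{-1}(0)$ with a pole structure, and the fiber $Y_\zeta$ for small $\zeta\ne0$ passes close to $p^{-1}(0)$ — so one must check that the comparison function $f$ restricted to $Y_\zeta$ remains controlled \emph{uniformly in $\zeta$} as $\zeta\to0$, rather than just pointwise on a fixed punctured neighborhood. This is handled by working with the \emph{extended} forms $y_2\cdots y_k\,\eta_Y$ and $y_2\cdots y_k\,\eta'_Y$, which are smooth on all of $Y$ (including $p^{-1}(0)$) and whose ratio is therefore a smooth function on a full neighborhood of $0\in Y$ equal to~$1$ at the origin; the factor $y_2\cdots y_k$ cancels in the ratio, so $f$ itself extends smoothly and the uniformity is automatic. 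Once this is in place the argument is the short continuity argument above, exactly parallel to \cite[Lemma~7.12]{Seidel_K3}.
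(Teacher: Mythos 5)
Your overall strategy --- reduce the comparison of the two phase functions to a comparison of the two volume forms near the origin, cancel the common polar factor $y_2\cdots y_k$, and conclude by continuity from the normalization at $y=0$ --- is the right one and is how the cited \cite[Lemma 7.12]{Seidel_K3} proceeds. But your pivotal second step is wrong as stated: you claim $\alpha_Y(\Lambda)/\alpha'_Y(\Lambda)=f(y)^2/|f(y)|^2$ for a scalar $f(y)=\eta_Y/\eta'_Y$ independent of $\Lambda$, ``because the two volume forms differ by a scalar, so the $\Lambda$-dependence cancels.'' They do not differ by a scalar. The form $\eta_Y$ is a $J_Y$-complex volume form, while $\eta'_Y$ is a complex volume form for the \emph{constant} structure $J'_Y$, and $J_Y=J'_Y$ only at the origin; for $y\neq 0$ they are top-degree forms of type $(n+1,0)$ for \emph{different} complex structures, hence not proportional as complex-valued alternating $(n+1)$-forms on the real tangent space, and the ratio $\eta_Y(e_1\wedge\cdots\wedge e_{n+1})/\eta'_Y(e_1\wedge\cdots\wedge e_{n+1})$ genuinely depends on $\Lambda$. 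Controlling exactly this $\Lambda$-dependence is the content of the lemma, so your argument assumes away the difficulty. The correct replacement is short but must be said: set $\mu=y_2\cdots y_k\,\eta_Y$ and $\mu'=y_2\cdots y_k\,\eta'_Y$; these are smooth complex-valued $(n+1)$-forms on $Y$ which coincide at $y=0$ by the normalization, the factor $(y_2\cdots y_k)^2/|y_2\cdots y_k|^2$ cancels in the quotient $\alpha_Y/\alpha'_Y$, and each of $\mu,\mu'$ is nonvanishing on every $\omega_Y$-Lagrangian subspace because $J_Y$ and $J'_Y$ are tamed by $\omega_Y$ (a vanishing restriction would force $\Lambda\cap J\Lambda\neq\{0\}$, contradicting tameness on an isotropic subspace). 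Hence $(y,\Lambda)\mapsto \mu(e_1\wedge\cdots\wedge e_{n+1})/\mu'(e_1\wedge\cdots\wedge e_{n+1})$ is continuous and nonvanishing on $Y\times\frakL$, where $\frakL$ denotes the (compact) Lagrangian Grassmannian of $(\bR^{2n+2},\omega_Y)$, and equals $1$ on $\{0\}\times\frakL$; compactness of $\frakL$ then gives the uniform smallness of the argument for $\|y\|<\delta$.

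A secondary point: the lemma concerns $\Lambda\in\frakL_{Y,y}$, i.e.\ Lagrangian subspaces of the ambient $T_yY$ (the fiberwise Grassmannian is denoted $\frakL_{Y_\zeta,y}$ elsewhere in this section), so your first step of passing to induced fiberwise volume forms on $Y_\zeta$ via residues or division by $dp$ is a misreading of the statement and is not needed; correspondingly, the ``uniformity in $\zeta$'' you flag as the main obstacle is not an issue here, since the statement is pointwise on $\{p\neq 0\}\subset Y$ and the uniformity required is over the compact Lagrangian Grassmannian, not over the fibers of $p$.
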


Let 
$
 H(y) = - \frac 12 |p(y)|^2
$
and consider its Hamiltonian vector field $X$ and flow $\phi_t$.
For a regular value $r$ of $\mu$, the induced function, 
Hamiltonian vector field, and its flow on $Y^{\red}$ are denoted by
\[
  H^{\red}(y^{\red}) = - 2^{k-3} \frac{r_2 \dots r_k}{r_1}
                      q(r_1, \dots, r_k, y_{k+1}, \dots, y_{n+1}),
\]
$X^{\red}$, and $\phi_t^{\red}$ respectively.
We write the complex structure on $Y^{\red}$
induced from $J'_Y$ as $J'_{Y^{\red}}$.
Then $\eta'_{Y^{\red}}$ gives a $J'_{Y^{\red}}$-complex volume form
on $Y^{\red}$.
Let $\alpha'_{Y^{\red}}$ be the phase function corresponding to 
$\eta'_{Y^{\red}}$.
The proof of the following lemma is
the same as in \cite{Seidel_K3}:


\begin{lem}[cf. {\cite[Lemma 7.13]{Seidel_K3}}] \label{lem:7.13}
  For any $\epsilon > 0$, there is $\delta > 0$ such that for
  $\|r \| < \delta$, $r_2\dots r_k/r_1 < \delta$, $\|y^{\red}\| < \delta$,
  and $|t| < \delta r_1 / r_2 \dots r_k$, 
  $\phi^{\red}_t$ is well-defined and
  \[
    | \tilde{\alpha}'_{\phi_t^{\red}} (\Lambda^{\red}) | < \epsilon
  \]
  for any Lagrangian subspace $\Lambda^{\red}$.
\end{lem}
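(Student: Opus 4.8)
The plan is to exploit that $H^{\red}$ is a \emph{small} constant multiple of a fixed smooth function. Write $c = 2^{k-3} r_2 \cdots r_k / r_1$, so that $H^{\red}(y^{\red}) = - c\, q(r_1, \dots, r_k, y_{k+1}, \dots, y_{n+1})$ with $0 < c < 2^{k-3}\delta$ by hypothesis, and its Hamiltonian vector field is $X^{\red} = c\, X_{-q}$, where $X_{-q}$ is the Hamiltonian vector field of $-q(r,\,\cdot\,)$ with respect to the standard symplectic form on $Y^{\red} \subset \bC^{n+1-k}$. Since $q$ is smooth with $q(0) = 1$, for $\|r\|$ small the function $q(r,\,\cdot\,)$ together with its first and second derivatives is bounded on a fixed ball by a constant $C_0$ independent of $r$; hence $\|X^{\red}\|_{C^1} \le C_1 c$ there.

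First I would check that $\phi^{\red}_t$ is well-defined on the stated domain: integrating a vector field of norm $\le C_1 c$ starting from $\|y^{\red}\| < \delta$ for time $|t| < \delta r_1/(r_2 \cdots r_k) = 2^{k-3}\delta/c$ moves the point a distance $\le C_1 c \cdot 2^{k-3}\delta/c = C_1 2^{k-3}\delta$, so shrinking $\delta$ at the outset keeps the whole trajectory inside the ball where all the above bounds hold.

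Next I would control the linearization. The variational equation $\tfrac{d}{dt}\, d\phi^{\red}_t = dX^{\red}(\phi^{\red}_t) \circ d\phi^{\red}_t$ together with $\|dX^{\red}\| \le C_1 c$ and Gronwall's inequality gives $\|d\phi^{\red}_t - \Id\| \le e^{C_1 c |t|} - 1 \le e^{C_1 2^{k-3}\delta} - 1$, which is as small as we please once $\delta$ is small, uniformly over all admissible $r$, $y^{\red}$, $t$.

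Finally, $\alpha'_{\phi^{\red}_t}(\Lambda^{\red}) = \alpha'_{Y^{\red}}\!\big((d\phi^{\red}_t)_*\Lambda^{\red}\big) / \alpha'_{Y^{\red}}(\Lambda^{\red})$ depends continuously on $d\phi^{\red}_t$ and equals $1$ when $d\phi^{\red}_t = \Id$; by compactness of the Lagrangian Grassmannian, the $C^1$-closeness above confines $\alpha'_{\phi^{\red}_t}$ to an arbitrarily small arc around $1$, uniformly in $\Lambda^{\red}$ and $y^{\red}$. Running this along the path $s \mapsto \phi^{\red}_s$, $s \in [0,t]$, every member of which is $C^1$-close to the identity, the continuous lift normalized by $\tilde{\alpha}'_{\phi^{\red}_0} \equiv 0$ never winds, hence $|\tilde{\alpha}'_{\phi^{\red}_t}(\Lambda^{\red})| < \epsilon$. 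The only real point requiring care is that every $\delta$-dependent estimate be \emph{uniform} in $r$ as $r_2 \cdots r_k / r_1 \to 0$, which is exactly what the factorization $X^{\red} = c\, X_{-q}$ with $q$ and its derivatives uniformly bounded delivers; this runs parallel to \cite[Lemma 7.13]{Seidel_K3}.
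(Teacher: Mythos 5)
Your argument is correct and is exactly the one the paper intends (it defers to Seidel's Lemma 7.13 rather than writing it out): the reduced Hamiltonian factors as the small constant $c = 2^{k-3} r_2\cdots r_k/r_1$ times a function whose $C^2$-norm is uniformly bounded for $\|r\|$ small, and the time restriction $|t| < \delta r_1/(r_2\cdots r_k)$ makes $c|t| = O(\delta)$, so the flow and its linearization stay $C^1$-close to the identity and the normalized lift of the (constant-volume-form) phase cannot move by more than $\epsilon$. No gaps.
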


Now we prove the following:

\begin{lem}[cf. {\cite[Lemma 7.14]{Seidel_K3}}] \label{lem:7.14}
  For any $\epsilon > 0$, there is $\delta_1 > \delta_2 > 0$ 
  such that if
  $\|y \| < \delta_1$, $|y_1| > \delta_2$, $0 < |p(y)| < \delta_1$
  and $|t|  < \delta_1 |p(y)|^{-2}$, 
  then $\phi_t$ is well-defined and satisfies
  \[
    \left| \tilde{\alpha}'_{\phi_t}(\Lambda) 
    - \frac{2t}{2\pi} 
      \left( 1 + \frac{|y_1|^2}{|y_2|^2} + \dots + \frac{|y_1|^2}{|y_k|^2}
      \right)^{-1} \right| < n+1 + \epsilon
  \]
  for any $\Lambda \in \frakL_{Y, y}$.
\end{lem}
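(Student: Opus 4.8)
\emph{Proof strategy (plan).}
The plan is to follow the proof of \cite[Lemma 7.14]{Seidel_K3} closely, with the local model of Assumption \ref{ass:local_model} in place of \cite[Assumption 7.5]{Seidel_K3}. By Lemma \ref{lem:7.12} it is enough to estimate $\tilde\alpha'_{\phi_t}$ rather than $\tilde\alpha_{\phi_t}$, at the cost of a further $\epsilon$. Since $H = -\frac12|p|^2$ is $T^k$-invariant, its flow $\phi_t$ preserves the moment map $\mu$, and since $\{|p|^2, H\} = 0$ it also preserves $|p|$; hence along every trajectory the functions $|y_1|, \dots, |y_k|$ and $|p|$ are constant. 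For $\|y\| < \delta_1$ and $|y_1| > \delta_2$ this keeps the trajectory in a fixed compact subset of $Y$ bounded away from $\{y_1\cdots y_k = 0\}$, so $\phi_t$ is well-defined near $y$; it also shows $\phi_t$ descends to the reduced flow $\phi_t^{\red}$ on $Y^{\red}$ for the regular value $r = \mu(y)$ (regular since $|p(y)|>0$ forces $y_2,\dots,y_k\ne 0$).

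The hypothesis $|t| < \delta_1|p(y)|^{-2}$ then translates, via $|p(y)|^2 = 2^{k-2}(r_2\cdots r_k/r_1)|q|^2$ with $q$ near $1$, into $|t| < \delta'\, r_1/(r_2\cdots r_k)$ once $\delta_1$ is small, while $\|r\|$, $r_2\cdots r_k/r_1$ and $\|y^{\red}\|$ are also small; so Lemma \ref{lem:7.13} applies and yields $|\tilde\alpha'_{\phi_t^{\red}}(\Lambda^{\red})| < \epsilon$ for every reduced Lagrangian subspace. In the coordinates $y_j = \sqrt{2I_j}\,e^{\ii\theta_j}$ ($j \le k$) and $y^{\red} = (y_{k+1},\dots,y_{n+1})$ on $Y^{\red}$, the map $\phi_t$ shifts $\theta_j$ by $\Theta_j = -\int_0^t \partial_{I_j}H\,ds$ and moves $y^{\red}$ by $\phi_t^{\red}$; using conservation of $|p|$ and of the $|y_j|$ one gets $\partial_{I_1}H = |p|^2/|y_1|^2 + O(|p|^2)$ and $\partial_{I_j}H = -|p|^2/|y_j|^2 + O(|p|^2)$ for $2\le j\le k$, with leading terms constant along trajectories, so $\Theta_j = t\,\partial_{I_j}H(y) + O(\delta_1)$.

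The heart of the argument is the computation of $\tilde\alpha'_{\phi_t}(\Lambda)$ with respect to $\eta'_Y = dy_1 \wedge \frac{dy_2}{y_2}\wedge\cdots\wedge\frac{dy_k}{y_k}\wedge\eta'_{Y^{\red}}$. Writing $L_X\eta'_Y = g\,\eta'_Y + \beta$, where $g$ is the $J'_Y$-holomorphic divergence of $X$ along $\eta'_Y$ and $\beta$ is the part of type $(n,1)$ and lower, one obtains
$$
 \tilde\alpha'_{\phi_t}(\Lambda)
  = \frac1\pi\int_0^t \operatorname{Im} g(\phi_s(y))\,ds
  + \frac1\pi\int_0^t \operatorname{Im}\!\frac{\beta((\phi_s)_*\Lambda)}{\eta'_Y((\phi_s)_*\Lambda)}\,ds .
$$
Computing $g$ directly from $p(y) = (y_2\cdots y_k/y_1)\,q$ — splitting the $dy_1\wedge\cdots\wedge dy_{n+1}$-divergence of $X$ from the factor $1/(y_2\cdots y_k)$ — and absorbing the discrepancy between $\alpha_Y$ and $\alpha'_Y$ and the reduced contribution by Lemmas \ref{lem:7.12} and \ref{lem:7.13}, the first integral equals $\frac{2t}{2\pi}\big(1 + |y_1|^2/|y_2|^2 + \cdots + |y_1|^2/|y_k|^2\big)^{-1}$ up to an error the choice of $\delta_1,\delta_2$ keeps below $\epsilon$. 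The second integral measures the failure of $(\phi_t)_*$ to be complex linear; since $(\phi_t)_*$ is a bijection of Lagrangian Grassmannians, it equals the difference of the phases of two Lagrangian subspaces of a complex $(n+1)$-dimensional space, hence has absolute value at most $n+1$. Adding the three contributions and choosing $\delta_1 > \delta_2 > 0$ small enough (depending on $n$, $k$, $\epsilon$) so that every $O(\delta_1)$-type error is $<\epsilon$ gives the bound $n+1+\epsilon$. I expect the main obstacle to be the explicit evaluation of the first integral to that closed form, together with the bookkeeping of which contributions are of the size of the main term and which fall into the $O(n+1+\epsilon)$ remainder — precisely the computation of \cite[Lemma 7.14]{Seidel_K3}, which carries over to the model of Assumption \ref{ass:local_model}; one should also verify that the conservation laws above keep $\phi_t$ inside $Y$ for the full range $|t| < \delta_1|p(y)|^{-2}$, shrinking $Y$ and the constants $\delta_1,\delta_2$ if necessary.
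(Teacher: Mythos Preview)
Your overall plan --- conservation of $\mu$ and $|p|$ along the flow, reduction and Lemma~\ref{lem:7.13} for the $Y^{\red}$-contribution, and a final $(n{+}1)$-type bound --- is sound, and your action--angle computation of the $\Theta_j$ is essentially the paper's computation for one particular Lagrangian. Where your argument departs from the paper, and where it has a genuine gap, is in the passage from that special Lagrangian to an arbitrary $\Lambda$.

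The paper does \emph{not} use the decomposition $L_X\eta'_Y = g\,\eta'_Y + \beta$. Instead it (i) introduces the model Hamiltonian $H' = -\tfrac12|y_2\cdots y_k/y_1|^2$ with Hamiltonian vector field $X'$ and proves $\|X-X'\|<\epsilon$ on $\{\|y\|<\delta_1,\ |y_1|>\delta_2\}$; (ii) chooses the torus-adapted Lagrangian $\Lambda = \ii y_1\bR\oplus\cdots\oplus \ii y_k\bR\oplus\Lambda^{\red}$, for which $\alpha'_Y(\Lambda) = (-1)^k(y_1^2/|y_1|^2)\,\alpha'_{Y^{\red}}(\Lambda^{\red})$ factors; (iii) computes $\tilde\alpha'_{\phi_t}(\Lambda)$ for this $\Lambda$ as the integral of $X'\arg(y_1^2/|y_1|^2)$, plus an $(X-X')$-error, plus the reduced contribution (small by Lemma~\ref{lem:7.13}), obtaining the closed-form main term up to $\epsilon$; and only then (iv) passes to an arbitrary $\Lambda_1$ via $|\tilde\alpha'_{\phi_t}(\Lambda_1)-\tilde\alpha'_{\phi_t}(\Lambda)| < n+1$ \cite[Lemma~6.11]{Seidel_K3}.

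Your route via $g$ and $\beta$ does not give this. The $\Lambda$-independent first integral $\tfrac1\pi\int_0^t\operatorname{Im} g$ is, to leading order, $t\,|p|^2/(\pi|y_1|^2)$ --- not the stated main term $t\,\pi^{-1}\bigl(1+\sum_{j\ge2}|y_1|^2/|y_j|^2\bigr)^{-1}$; under the hypotheses $|t||p|^2<\delta_1$, $|y_1|>\delta_2$ with $\delta_1>\delta_2$ these can differ by an amount of order $\delta_1/\delta_2^2$. Consequently the second integral is \emph{not} bounded by $n+1$ in absolute value: it equals $\tilde\alpha'_{\phi_t}(\Lambda)$ minus the first integral, and already for the torus-adapted $\Lambda$ this difference is not small. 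The heuristic ``it equals the difference of the phases of two Lagrangian subspaces'' does not apply to a time integral of $\beta/\eta'_Y$; the $(n{+}1)$-bound only becomes available \emph{after} one has computed $\tilde\alpha'_{\phi_t}$ at a single reference $\Lambda$, which is exactly step~(iv). If you simply carry your $\Theta_j$-computation through for the torus-adapted $\Lambda$ and then invoke \cite[Lemma~6.11]{Seidel_K3}, you recover the paper's proof.
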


\begin{proof}
The proof of well-definedness of $\phi_t$ is parallel to \cite{Seidel_K3}.
Note that the condition $|y_1| > \delta_2$ is preserved under the flow
since $\phi_t$ is $T^k$-equivariant.
Let $H' = - \frac 12 |y_2 \dots y_k/y_1|^2$ and
\[
  X' = -\ii \left( \frac 1{|y_1|^2} + \dots + \frac 1{|y_k|^2} \right)^{-1}
       \left( - \frac{y_1}{|y_1|^2},  \frac{y_2}{|y_2|^2}, 
              \dots,  \frac{y_k}{|y_k|^2}, 0, \dots, 0 \right)
\]
be its Hamiltonian vector field.
Then $H(y) = H'(y)r(y)$ for some smooth function 
$r(y) = 1 + O(\|y\|)$.
By direct computation, we have
\begin{align*}
  \| dH' \| &\le C \left| \frac{y_2 \dots y_k}{y_1} \right|^2
                 \left( \frac 1{|y_1|^2} + \dots + \frac 1{|y_k|^2} \right) \\
            &\le C \left| \frac{y_2 \dots y_k}{y_1} \right|^2
                 \frac{k \|y \|^{2(k-1)}}{|y_1 \dots y_k|^2} 
             = C \frac{k \| y \|^{2(k-1)}}{|y_1|^4},
\end{align*}
which is bounded if $\|y \| < \delta_1$ and $|y_1| > \delta_2$.
Then 
\[
  \| dH - d H' \| \le |r-1| \| dH' \| + |H'| \|dr \|
                  \le C (\| y \| + |H'|),
\]
and this implies that $\| dH -dH' \|$ is small 
if $|H|$ is also sufficiently small. 
Hence we obtain
\begin{equation}
  \|X - X' \| < \epsilon
  \label{eq:est_X}
\end{equation}
for small $\delta_1$.
Take a Lagrangian subspace $\Lambda^{\red}$ in $T_{y^{\red}}Y^{\red}$
and consider a Lagrangian subspace given by
\[
  \Lambda = \ii y_1 \bR \oplus \dots \oplus \ii y_k \bR \oplus \Lambda^{\red}
  \subset T_yY.
\]
Then we have 
\[
  \alpha'_Y(\Lambda) = (-1)^k \frac{y_1^2}{|y_1|^2} \cdot
                       \alpha'_{Y^{\red}}(\Lambda^{\red}),
\]
and hence
\begin{align*}
  \tilde{\alpha}'_{\phi_t}(\Lambda) 
    &= \frac 1{2\pi} \int_0^t X \arg (\alpha'_Y((D\phi_{\tau}(\Lambda)) d \tau\\
    &= \frac 1{2\pi} \int_0^t X' \arg \frac{y_1^2}{|y_1|^2} d \tau
     + \frac 1{2\pi} \int_0^t (X-X') \arg \frac{y_1^2}{|y_1|^2} d \tau\\
    &\qquad + \frac 1{2\pi} \int_0^t X^{\red} \arg (\alpha'_{Y^{\red}}
             ((D\phi^{\red}_{\tau}(\Lambda^{\red})) d \tau.
\end{align*}
The third term is small from Lemma \ref{lem:7.13}.
The second term is bounded by
\[
  \frac 1{2\pi}\int_0^{t} \| X-X' \| 
    \left\|D \arg \frac{y_1^2}{|y_1|^2} \right\| d \tau,
\]
which is also small from (\ref{eq:est_X}) and the fact that
\[
  \left\|D \arg \frac{y_1^2}{|y_1|^2} \right\| \le C \| X \|
  = C \| dH \|
\]
is uniformly bounded.
Since $|y_1|^2$ is preserved under the flow, the first term is
\begin{align*}
  &\frac 1{2\pi} \int_0^t X' \arg \frac{y_1^2}{|y_1|^2} d \tau \\
  &\quad = \frac 1{2\pi}
       \left( \frac 1{|y_1|^2} + \dots + \frac 1{|y_k|^2} \right)^{-1}
       \int_0^t \frac 1{|y_1|^2} 
         \ii y_1 \partial_{y_1}\arg \frac{y_1^2}{|y_1|^2} d \tau \\
  &\quad  = \frac 1{2\pi}
       \left( \frac 1{|y_1|^2} + \dots + \frac 1{|y_k|^2} \right)^{-1}
       \frac {2t}{|y_1|^2} \\
  &\quad  = \frac {2t}{2\pi}
       \left( 1 + \frac {|y_1|^2} {|y_1|^2} + \dots 
       + \frac {|y_1|^2} {|y_k|^2} \right)^{-1}.
\end{align*}
Then we obtain 
\[
    \left| \tilde{\alpha}'_{\phi_t}(\Lambda) 
    - \frac{2t}{2\pi} 
      \left( 1 + \frac{|y_1|^2}{|y_2|^2} + \dots + \frac{|y_1|^2}{|y_k|^2}
      \right)^{-1} \right| < \epsilon.
\]
For arbitrary Lagrangian subspace $\Lambda_1$, the desired bound for
$\tilde{\alpha}'_{\phi_t}(\Lambda_1)$ is obtained from this and
the fact that 
\[
  |\tilde{\alpha}'_{\phi_t}(\Lambda_1) - 
  \tilde{\alpha}'_{\phi_i}(\Lambda)| < n+1
\]
(see \cite[Lemma 6.11]{Seidel_K3}).
\end{proof}

Let $Z$ be the horizontal lift of $-\ii \zeta \partial_{\zeta}$,
and $\psi_t$ be its flow.
Then there is a positive function $f$ such that $Z = fX$,
and hence $\psi_t(y) = \phi_{g_t(y)}(y)$ for
\[
  g_t(y) = \int_0^t f(\psi_{\tau}(y)) d \tau.
\]
By the same argument as in \cite{Seidel_K3}, 
we have:

\begin{lem}[cf. {\cite[Lemma 7.15]{Seidel_K3}}]
  For any $d > 0$ and $\epsilon > 0$, there is $\delta > 0$ 
  such that for $\zeta \in \bC$ with $0<|\zeta| < \delta$ and
  $y \in Y_{\zeta}= p^{-1}(\zeta)$ with $\|y\| < \delta$,
  the $d$-fold monodromy $h_{\zeta}^d$ is well-defined, 
    $\epsilon / |\zeta|^2 > 2 \pi d$, and satisfies
  \[
    g_{2 \pi d}(y) \le \epsilon / |\zeta|^2.
  \]
\end{lem}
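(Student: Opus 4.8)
The plan is to reduce the bound on $g_{2\pi d}$ to a pointwise estimate for $f$ and then to use that $f$ is essentially constant along the monodromy orbits. The key step is a geometric identification of $f$. Since $H=-\tfrac12|p|^{2}$ is a function of $p$ alone, the form $dH$ is pulled back from $\bC$ along $p$ and hence annihilates vertical vectors; therefore $X=X_H$ is itself horizontal, that is, the horizontal lift of $dp(X)$. Comparing with the horizontal lift $Z$ of $-\ii\zeta\,\partial_\zeta$ and computing in the $J_Y$-compatible metric at the origin (with the $O(\|y\|)$-corrections away from it handled as in \cite[Lemma 7.12]{Seidel_K3}), one gets $|Z|=|\zeta|/\|\nabla p\|$ and $\|X\|=\|dH\|=|p|\,\|\nabla p\|=|\zeta|\,\|\nabla p\|$ on $Y_\zeta$, so that
\[
  f=\frac{|Z|}{|X|}=\frac{1}{\|\nabla p\|^{2}}.
\]

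Next I would insert the explicit form of $p$. From $p(y)=\dfrac{y_2\cdots y_k}{y_1}\,q$ with $q(0)=1$ one has $\partial_{y_1}p=-p/y_1$, $\partial_{y_j}p=p/y_j$ for $2\le j\le k$, and $\partial_{y_j}p=O(|p|)$ for $j>k$, hence
\[
  \|\nabla p(y)\|^{2}=|p(y)|^{2}\Bigl(\tfrac1{|y_1|^{2}}+\dots+\tfrac1{|y_k|^{2}}\Bigr)\bigl(1+O(\|y\|)\bigr).
\]
On $Y_\zeta$ we have $|p|=|\zeta|$ and $\bigl(\sum_{j=1}^{k}|y_j|^{-2}\bigr)^{-1}\le\min_j|y_j|^{2}\le\|y\|^{2}$, so $f(y)\le C\,\|y\|^{2}|\zeta|^{-2}$ for $y\in Y_\zeta$ with $\|y\|<\delta$, with $C$ depending only on the model; this bound is uniform and, in particular, is not worsened by proximity to the pole locus $\{y_1=0\}$.

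For the flow estimate, note that $H$ is $T^{k}$-invariant because $|p(\rho_s y)|=|p(y)|$, so it Poisson-commutes with the moment-map components $\mu_j=|y_j|^{2}/2$; hence the flow of $X$, and with it $\psi_t=\phi_{g_t}$, preserves each of $|y_1|,\dots,|y_k|$. Therefore $f$ and $|Z|=(\sum_{j\le k}|y_j|^{-2})^{-1/2}$ are constant along the orbit of $y$ up to a factor $1+O(\|y\|)$, and $|Z|\le\min_{j\le k}|y_j|\le\|y\|<\delta$ there; so $\psi_\tau(y)$ stays within distance $2\pi d\,\delta$ of $y$ for $\tau\in[0,2\pi d]$, which for $\delta$ small relative to the radius of the ball $Y$ keeps $\psi_\tau(y)$ inside $Y$ — this is the asserted well-definedness of $h_\zeta^{d}$. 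Then
\[
  g_{2\pi d}(y)=\int_0^{2\pi d}f(\psi_\tau(y))\,d\tau\le 4\pi d\,C\,\|y\|^{2}|\zeta|^{-2}<\frac{\epsilon}{|\zeta|^{2}}
\]
as soon as $4\pi d\,C\,\delta^{2}\le\epsilon$, and with the same choice of $\delta$ one has $\epsilon/|\zeta|^{2}>\epsilon/\delta^{2}\ge 2\pi d$, which is the assertion of the lemma.

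The genuinely delicate part, and the reason one must follow \cite[Section 7]{Seidel_K3} line by line rather than merely cite it, is the error bookkeeping: since $J_Y$ is only $\omega_Y$-tamed off the origin and $q$ is a nontrivial smooth factor, the identities $f=\|\nabla p\|^{-2}$, $\|\nabla p\|^{2}=|p|^{2}\sum_{j\le k}|y_j|^{-2}$, and the $\phi$-invariance of the $|y_j|$ all hold only up to multiplicative $(1+O(\|y\|))$, and one must verify that these perturbations do not spoil the uniform estimate — exactly the content of Seidel's Lemmas 7.12--7.14, here reproved in the meromorphic local model of Assumption \ref{ass:local_model}.
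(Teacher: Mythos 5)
Your argument is correct and is essentially the argument the paper invokes when it writes ``by the same argument as in [Seidel]'': you reproduce Seidel's proof of Lemma 7.15, with the only substantive adaptation being that in the meromorphic model $\partial_{y_1}p=-p/y_1$ rather than $+p/y_1$, which changes a sign but not the magnitude, so the key bound $f=\|\nabla p\|^{-2}(1+O(\|y\|))\le C\|y\|^{2}/|\zeta|^{2}$ and hence $g_{2\pi d}\le\epsilon/|\zeta|^{2}$ goes through verbatim. Your closing remark correctly identifies where the care is needed (the tamed-versus-compatible and $q\ne 1$ corrections), and your treatment of well-definedness via $T^{k}$-invariance of $H$ and the bound $\|Z\|\lesssim\min_{j\le k}|y_j|$ matches the intended argument.
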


\begin{proof}[Proof of Proposition \ref{Seidel7.16}]
Let $\eta_{Y_{\zeta}} = \eta_Y / (d \zeta/ \zeta^2)$ be a 
complex volume form on $Y_{\zeta}$,
and $\alpha_{Y_{\zeta}}$ be the corresponding phase function.
Take $\Lambda \in \frakL_{Y,y}$ such that $Dp (\Lambda) = a \bR$
for $a \in U(1)$, and set 
$\Lambda^v = \Lambda \cap \ker Dp \in \frakL_{Y_{\zeta}, y}$.
Then 
\begin{equation}
  \alpha_{Y_{\zeta}}(\Lambda^v) = \frac{\zeta^4}{a^2 |\zeta|^4} 
  \alpha_Y(\Lambda).
  \label{eq:phase}
\end{equation}
We consider a Lagrangian subspace $\Lambda^v \in \frakL_{Y_{\zeta}, y}$
such that $Dp (\Lambda^v) = \ii \zeta \bR$, and containing
the tangent space of the torus action on $Y_{\zeta}$.
Then $\Lambda^v$ has the form
\[
  \Lambda^v = (\ii y_1 \bR \oplus \dots \oplus \ii y_k \bR
              \oplus \Lambda^{\red}) \cap \ker Dp.
\]
Let $\Lambda = \Lambda^v \oplus Z_y \bR \in \frakL_{Y,y}$.
Since $Z$ is the horizontal lift of 
$-\ii \zeta \partial_{\zeta} \in T_{\zeta}(\ii \zeta \bR)$, 
$Z_{\psi_t(y)}$ is contained in $D\psi_t(\Lambda)$, and hence we have
\[
  D(\psi_t|_{Y_{\zeta}}) (\Lambda^v)
  = D\psi_t(\Lambda) \cap \ker(Dp).
\]
From this and (\ref{eq:phase}) we have
\[
  \alpha_{\psi_t|_{Y_{\zeta}}}(\Lambda^v)
  = e^{-2t} \alpha_{\psi_t}(\Lambda).
\]
Combining this with Lemma \ref{lem:7.12} and \ref{lem:7.14}, we obtain
\begin{align*}
  \tilde{\alpha}_{h^d_{\zeta}} (\Lambda^v)
  &= \tilde{\alpha}_{g_{2\pi d}(y)} (\Lambda) - 2d \\
  &\le \tilde{\alpha}'_{g_{2\pi d}(y)} (\Lambda) - 2d + \epsilon \\
  &\le 2d \left( \left( 1 + \frac{|y_1|^2}{|y_2|^2} + \dots + \frac{|y_1|^2}{|y_k|^2}
      \right)^{-1} -1 \right)  + \epsilon \\
  &= - 2d \frac{\frac{1}{|y_2|^2} + \dots + \frac{1}{|y_k|^2}}
     {\frac{1}{|y_1|^2} + \frac{1}{|y_2|^2} + \dots + \frac{1}{|y_k|^2}}
       + \epsilon \\
  &\le -2d \frac{1}{1+ |\zeta|^2/|y_3|^{2(k-1)}}  + \epsilon
\end{align*}
if $|y_2| \le |y_3| \le \dots \le |y_k|$.
\end{proof}


Now we discuss gluing of the local models.
Let $X=\bP^{n+1}_{\bC}$
equipped with the standard complex structure $J_X$, 
the K\"{a}hler form $\omega_X$
and the anticanonical bundle $o_X = \scK^{-1}_X = \scO (n+2)$
as in Section \ref{sc:fuk}.
For $\sigma_{X,\infty} = x_1 \cdots x_{n+2}$ and a generic section
$\sigma_{X,0} \in H^0(\bP^{n+1}_{\bC}, \scO(n+2))$,
we consider a pencil of Calabi-Yau hypersurfaces defined by
\[
  X_z = \{ \sigma_{X,0} - z \sigma_{X,\infty} = 0 \} = p_X^{-1}( 1/z ),
\]
where $p_X = \sigma_{X,\infty}/\sigma_{X,0}$.
Let $C_i = \{ x_i = 0 \} \cong \bP^{n}_{\bC}$,
$i=1, \dots , n+2$ be the irreducible components of $X_{\infty}$
and set $C_0 = X_0$.
We assume that $\sigma_{X,0}$ is generic so that
the divisor $X_0 \cup X_{\infty}$
is normal crossing.
For $I \subset \{0, 1, \dots, n+2 \}$, we write
$C_I = \bigcap_{i \in I} C_i$ and
$C_I^{\circ} = C_I \setminus \bigcup_{J \supsetneq I} C_J.$
We will deform $\omega_X$ in such a way that it satisfies 
Assumption \ref{ass:local_model1} 
(resp. Assumption \ref{ass:local_model})
near $C_I$ with $0 \notin I$ (resp. $0 \in I$).

\begin{prop}
For each $I$, there exists a tubular neighborhood $U_I$ 
of $C_I$ in $\bP^{n+1}_{\bC}$ and
a fibration structure $\pi_I : U_I \to C_I$ such that
for each $p \in C_I$ 
the tangent space $T_p \pi_I^{-1}(p)$ of the fiber
is a complex subspaces in $T_pX$.
Moreover $\pi_I$ and $\pi_J$ are compatible
if $I \subset J$.
\end{prop}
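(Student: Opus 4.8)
The plan is to build the tubular neighborhoods $U_I$ together with the fibrations $\pi_I$ by a coordinate-patch construction adapted to the toric structure of $X=\bP^{n+1}_{\bC}$. First I would treat the model coordinate hyperplanes: near a point of $C_I^\circ$ with $I\subset\{1,\dots,n+2\}$, a suitable affine chart identifies a neighborhood with $\bC^{n+1}$ in which the equations $x_i=0$, $i\in I$, become linear coordinate hyperplanes, and the Fubini--Study form is standard up to higher order; here one simply takes $\pi_I$ to be the linear projection forgetting the coordinates $\{x_i\}_{i\in I}$, whose fibers are complex linear subspaces. The case $0\in I$ is handled identically after replacing one of the coordinate functions by $\sigma_{X,0}$, using genericity of $\sigma_{X,0}$ so that $\{\sigma_{X,0}=0\}\cup\{x_i=0\}_{i\in I\setminus\{0\}}$ is a normal crossing configuration and can be taken as part of a holomorphic coordinate system near each point of $C_I^\circ$.

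The second step is to globalize over $C_I$: the local projections above a cover of $C_I$ must be patched into a single fibration $\pi_I:U_I\to C_I$. Since the normal bundle $N_{C_I/X}$ of the (transverse) intersection $C_I$ splits compatibly as a sum of line bundles $\bigoplus_{i\in I}\scO_{C_I}(1)|_{C_I}$ (for $0\notin I$; with $\scO(n+2)$ in place of one summand when $0\in I$), one can use a partition of unity on $C_I$ to glue the local complex linear projections into a global tubular-neighborhood projection whose fibers remain complex subspaces of $T_pX$ at every point $p\in C_I$. The only subtlety is that a convex combination of complex-linear projections is again complex-linear, so the patched fibers stay complex; this is where I would spend the most care, making sure the partition-of-unity averaging is performed on the level of the projections (equivalently, on splittings of $0\to T\text{(fiber)}\to TX|_{C_I}\to \pi_I^*TC_I\to 0$) rather than on the almost complex structures, which would not interact well with convex combination.

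The third and final step is the compatibility $\pi_I$ versus $\pi_J$ for $I\subset J$. Here the point is that $C_J\subset C_I$ and, in the model charts, the projection forgetting $\{x_i\}_{i\in J}$ factors through the projection forgetting $\{x_i\}_{i\in I}$ followed by a further linear projection on the fibers of $\pi_I$. So I would construct the $\pi_I$ inductively by decreasing size of $I$: having built $\pi_J$ for all $J\supsetneq I$, construct $\pi_I$ on $U_I$ so that over $U_I\cap U_J$ one has $\pi_J=(\text{linear projection along }C_I)\circ\pi_I$, which is possible because the relevant partition of unity on $C_I$ can be chosen subordinate to the stratification by the $C_J$. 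Shrinking all the neighborhoods if necessary guarantees the $U_I$ and the factorizations are simultaneously defined.

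I expect the main obstacle to be the gluing in the second step: ensuring that the globally patched fiber directions remain genuine complex subspaces of $(T_pX,J_X)$ for every $p\in C_I$, not merely at the central points of the charts, while simultaneously respecting the nested compatibility with all $\pi_J$, $J\supsetneq I$. This forces the averaging to be done consistently for the whole tower $\{\pi_J\}_{J\supseteq I}$ at once, which is a bookkeeping rather than a conceptual difficulty, but it is the step where the construction could break if the charts and cutoff functions are not chosen with care. Everything else reduces to the standard fact that $\bP^{n+1}_{\bC}$ is toric and that a generic section $\sigma_{X,0}$ meets the toric boundary transversally.
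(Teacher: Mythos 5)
Your proposal is correct and follows essentially the same route as the paper: local holomorphic coordinate projections near each stratum, chosen compatibly with the projections $\pi_J$ for $J\supsetneq I$, then glued over $C_I$ by a partition of unity at the level of splittings of $TX|_{C_I}\to \scN_{C_I/X}$, which preserves complex-linearity and hence gives fibers with complex tangent space at each $p\in C_I$. The paper implements the averaging on normalized coordinate vector fields $\partial/\partial z'^j_{\alpha}$ rather than on the projection operators, but this is the same device, and the combinatorial bookkeeping you flag (the choice of cover and cutoffs consistent with the whole tower of strata) is exactly what the paper's conditions on the covering $\{V_\alpha\}$ are designed to handle.
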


See \cite[Proposition 7.1]{Ruan_II} 
for the definition of the compatibility.
This proposition is a weaker version of \cite[Proposition 7.1]{Ruan_II} 
in the sense that each fiber $\pi_I^{-1}(p)$ is required to be 
holomorphic only at $p \in C_I$.

\begin{proof}
For each $I$ we take a tubular neighborhood $U_I$ of $C_I$, and
consider an open covering $\{V_{\alpha} \}_{\alpha \in A}$ of
$\bigcup_I U_I$ satisfying
\begin{itemize}
  \item for each $\alpha \in A$, there exists a unique subset
        $I_{\alpha}$ in $\{0, 1, \dots, n+1\}$ such that 
        $V_{\alpha} \cap C_{I_{\alpha}} \ne \emptyset$ and
        $V_{\alpha} \cap C_J = \emptyset$ for all $J$ with
        $|J| > |I_{\alpha}|$, 
    \item $V_{\alpha}$ is a tubular neighborhood of 
        $V_{\alpha} \cap C_{I_{\alpha}}$, and
  \item for each $\alpha$, there exits a unique 
        $J_{\alpha} \supset I_{\alpha}$ such that if $V_{\alpha'}$
        intersects $V_{\alpha}$ and $|I_{\alpha'}| > |I_{\alpha}|$ 
        then $I_{\alpha} \subset I_{\alpha'} \subset J_{\alpha}$.
\end{itemize}
We take holomorphic coordinates 
$(w_{\alpha}, z_{\alpha}) = 
 (w_{\alpha}^1 \dots, w_{\alpha}^{n+1-|I_{\alpha}|},
 z_{\alpha}^1, \dots, z_{\alpha}^{|I_{\alpha}|})$ on $V_{\alpha}$ 
such that $C_{I_{\alpha}}$ is given by $z_{\alpha} = 0$ and $w_{\alpha}$ 
gives a coordinate on $C_{I_{\alpha}} \cap V_{\alpha}$,
and satisfying the following property: 
the projection $\pi_{\alpha} : V_{\alpha} \to C_{I_{\alpha}}$,
$(w_{\alpha}, z_{\alpha}) \mapsto w_{\alpha}$ is compatible
with $\pi_J$ for each $J \supset I_{\alpha}$.
Let $\{\rho_{\alpha} \}_{\alpha \in A}$ be a partition of unity
associated to $\{V_{\alpha} \}$.

Fix $p \in C_I^{\circ}$, and set 
$A_p := \{ \alpha \in A \, | \, p \in V_{\alpha} \}$.
Note that $I_{\alpha} \supset I$ for any $\alpha \in A_p$.
Take $\alpha_0 \in A$ such that 
$V_{\alpha_0} \cap V_{\alpha} \ne \emptyset$ for $\alpha \in A_p$
and $I_{\alpha_0} = J_{\alpha}$ is maximal.
Rename the coordinates on $V_{\alpha}$, $\alpha \in A_p$ so that 
the projection $\pi'_{\alpha} : V_{\alpha} \to C_I$ is given by
$(w'_{\alpha}, z'_{\alpha}) \mapsto w'_{\alpha}$.
Let
\[
  \operatorname{pr} : TV_{\alpha_0}|_{C_I} = 
  \vspan_{\bC} \left\{ \frac{\partial}{\partial w'^i_{\alpha_0}} \right\}
  \oplus
  \vspan_{\bC} \left\{ \frac{\partial}{\partial z'^j_{\alpha_0}} \right\}
  \longrightarrow
  \Ker d \pi'_{\alpha_0} =
  \vspan_{\bC} \left\{ \frac{\partial}{\partial z'^j_{\alpha_0}} \right\}
\]
be the projection.
After a coordinate change which is linear in $z'_{\alpha}$, we assume that 
$\operatorname{pr} (\partial / \partial z'^j_{\alpha}) = 
\partial / \partial z'^j_{\alpha_0}$ for each $j$.
Define 
\[
  E_{I, p} =  \vspan_{\bC} \Biggl\{
  \sum_{\alpha} \rho_{\alpha}(p) \frac{\partial}{\partial z'^j_{\alpha}} 
  \Biggm| j = 1, \dots, |I|
  \Biggr\}.
\]
Then $E_I = \bigcup_{p \in C_I} E_{I,p} \subset TX|_{C_I}$
is a complex subbundle which gives a splitting of 
$TX|_{C_I} \to \scN_{C_I/X} = TX|_{C_I}/TC_I$.
After shrinking $U_I$ if necessary, we obtain a fibration
$\pi_I : U_I \to C_I$ such that 
$T_p \pi_I^{-1}(p) = E_{I,p}$.
\end{proof}
Set
$
 U_I^{\circ} = \pi_I^{-1} (C_I^{\circ}).
$
We prove a weaker version of \cite[Theorem 7.1]{Ruan_II}.

\begin{prop} \label{prop:Ruan7.1}
There exists a K\"ahler form $\omega_X'$ in the class $[\omega_X]$ 
such that 
\begin{enumerate}
  \renewcommand{\labelenumi}{(\roman{enumi})}
  \item it tames $J_X$, and compatible with $J_X$ on $\bigcup_I C_I$,
  \item $\omega'_X = \omega_X$ outside a neighborhood of 
        $\mathrm{Sing} (X_0 \cup X_{\infty}) = \bigcup_{|I| \ge 2} C_I$,
  \item $C_i$'s intersect orthogonally, and
  \item each fiber of $\pi_I : U_I \to C_I$ is orthogonal to $C_I$.
\end{enumerate}
\end{prop}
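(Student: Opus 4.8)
We adapt the method of \cite[Theorem 7.1]{Ruan_II} to the present weaker setting. The form $\omega'_X$ will be produced by a finite induction over the strata $C_I^\circ$ with $|I| \ge 2$, ordered by decreasing dimension (equivalently, increasing $|I|$), the deepest strata being treated first. Suppose that we have already constructed a closed form $\omega''$ in the class $[\omega_X]$ which agrees with $\omega_X$ away from a neighborhood of $\mathrm{Sing}(X_0 \cup X_{\infty}) = \bigcup_{|I| \ge 2} C_I$, tames $J_X$, and satisfies (iii) and (iv) near every stratum already treated. Fix a stratum $C_I^\circ$ to be treated next. Using the fibration $\pi_I : U_I \to C_I$ of the preceding proposition and the adapted holomorphic coordinates $(w_\alpha, z_\alpha)$ on the charts $V_\alpha$ from its proof, together with the partition of unity $\{\rho_\alpha\}$, we write down a \emph{model form} $\omega_I$ on $U_I^\circ = \pi_I^{-1}(C_I^\circ)$: on each $V_\alpha$ it is the sum of $\pi_I^*(\omega''|_{C_I})$ and a $J_X$-positive form in the fibre directions of $\pi_I$, chosen so that along $C_I$ the normal coordinate hyperplanes $\{z_\alpha^j = 0\}$ (which cut out the $C_i$, $i \in I$) are mutually $\omega_I$-orthogonal and orthogonal to $C_I$. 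Because $T_p \pi_I^{-1}(p)$ is a complex subspace of $T_p X$ precisely when $p \in C_I$, the form $\omega_I$ is $J_X$-compatible along $C_I$ and merely tames $J_X$ on the rest of $U_I^\circ$; by construction it satisfies (iii) and (iv) near $C_I^\circ$. Since $\pi_I$ and $\pi_J$ are compatible whenever $I \subset J$, near the deeper (already treated) strata contained in $\overline{C_I^\circ}$ the model $\omega_I$ is compatible with the product structure that $\omega''$ already carries there.

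To glue, choose a cut-off function $\chi_I$ supported in $U_I^\circ$, equal to $1$ on a smaller neighborhood of $C_I^\circ$, and vanishing near $\bigcup_{J \supsetneq I} C_J$; since $U_I^\circ$ retracts onto $C_I^\circ$ and $[\omega_I] = [\omega''|_{C_I^\circ}]$ there, we may write $\omega_I - \omega'' = d\beta_I$ for a $1$-form $\beta_I$ on $U_I^\circ$, and we replace $\omega''$ by $\omega'' + d(\chi_I \beta_I)$. The new form is again closed, still lies in $[\omega_X]$, still agrees with $\omega_X$ outside a neighborhood of $\mathrm{Sing}(X_0 \cup X_{\infty})$, coincides with $\omega_I$ near $C_I^\circ$, and is unchanged near the deeper strata (where $\chi_I \equiv 0$), so (ii), (iii) and (iv) are maintained. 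After the last step of the induction one reads off (ii)--(iv) on the respective neighborhoods, while (i) holds with $J_X$-compatibility along $\bigcup_I C_I$ because near those strata every cut-off is locally constant, the error terms $d\chi_I \wedge \beta_I$ vanish, and the form there is one of the compatible models $\omega_I$ (or $\omega_X$).

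The part requiring care --- and, we expect, the main obstacle --- is to guarantee that the form keeps taming $J_X$ throughout the induction. On the collar where $\chi_I$ varies we have $\omega'' + d(\chi_I \beta_I) = (1 - \chi_I)\,\omega'' + \chi_I\,\omega_I + d\chi_I \wedge \beta_I$, whose first two terms form a convex combination of forms taming $J_X$ and hence tame $J_X$ with a uniform positive lower bound on the compact collar; one must then arrange the nested neighborhoods $U_I^\circ$, the cut-offs $\chi_I$, and the primitives $\beta_I$ so that the error $d\chi_I \wedge \beta_I$ is dominated by that bound. This estimate, together with the bookkeeping that makes the chart models mutually compatible along each $C_I$ and makes the correction for one stratum leave (iii)--(iv) intact near the deeper ones --- which is exactly where the compatibility of the fibrations $\pi_I$ and $\pi_J$ is used --- constitutes the substance of the argument; the remaining verifications are pointwise linear algebra along the $C_I$ and routine cut-off estimates, as in \cite[Theorem 7.1]{Ruan_II}.
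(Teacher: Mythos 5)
Your overall strategy---inductive modification stratum by stratum, starting from the deepest strata, a local model $\omega_I = \pi_I^*(\cdot) + (\text{fibrewise positive form})$, and interpolation of the form $\omega'' + d(\chi_I\beta_I) = (1-\chi_I)\omega'' + \chi_I\omega_I + d\chi_I\wedge\beta_I$---is exactly the paper's. But the one step you explicitly defer, the taming of $J_X$ across the collar, is precisely the substance of the paper's proof, and as stated your sketch does not close it: a generic primitive $\beta_I$ of $\omega_I-\omega''$ need not be small near $C_I$, and a cut-off supported in a collar of width $\delta$ has $\|d\chi_I\|\sim \delta^{-1}$, so the error $d\chi_I\wedge\beta_I$ has no a priori bound and shrinking the neighborhood does not help. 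The paper's resolution has two ingredients you are missing. First, the primitive is not arbitrary: both $\omega_X$ and the model $\omega_{U_I}$ are realized as curvatures of connection $1$-forms $\theta_X$, $\theta_{U_I}$ on the anticanonical bundle $o_X|_{U_I}$, normalized so that $\theta_{U_I}|_{TC_I}=\theta_X|_{TC_I}$; hence $\eta=\theta_X-\theta_{U_I}$ vanishes along $C_I$ and satisfies $\|\eta\|\le Cr$, where $r$ is the distance to $C_I$. Second, the cut-off is taken logarithmic in $r$, $f=h(\log r)$ with $-1/(2C^3)\le h'\le 0$, so that $df\wedge\eta = h'\,dr\wedge(\eta/r)$ and $\|df\wedge\eta\|\le \tfrac{1}{2C^3}\cdot C\cdot C=\tfrac{1}{2C}$, a bound that is uniform and beats the taming constant $C^{-1}$ of the convex combination $f\omega_{U_I}+(1-f)\omega_X$. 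Without the first-order vanishing of the primitive along $C_I$ and the $h(\log r)$ device, the estimate you postulate is not available.

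Two smaller discrepancies: the paper obtains the model $\omega_{U_I}$ from the symplectic neighborhood theorem applied to the symplectic normal bundle, identified with $E_I$ (this is what gives (i) and (iv) for the model); and condition (iii), the pairwise orthogonality of the $C_i$ along $C_I$, is not built into the model but is achieved afterwards by applying the local normalization of \cite[Lemma 1.7]{Seidel_LES} and \cite[Lemma 4.3]{Ruan_II} to each fiber of $\pi_I$ --- the same references that initialize the induction at the deepest stratum $\bigcup_{|I|=n+1}C_I$. Your remark that the collar is compact is also not quite right, since $C_I^\circ$ is an open stratum; the uniformity comes from the constant $C$ chosen on $\{r\le\delta\}\subset U_I$, not from compactness.
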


\begin{proof}
It is shown in \cite[Lemma 1.7]{Seidel_LES} and 
\cite[Lemma 4.3]{Ruan_II} that $\omega_X$ can be modified locally 
so that it is standard near the lowest dimensional stratum 
$\bigcup_{|I|=n+1} C_I$.
We deform the symplectic form inductively to obtain $\omega'_X$

Fix $I \subset \{0, 1, \dots, n+1\}$ and 
take a distance function $r : X \to \bR_{\ge 0}$ from $C_I$,
i.e., $C_I = r^{-1}(0)$.
Fix a local trivialization of $o_X|_{U_I}$ by a section which
has unit pointwise norm and parallel in the radial direction of the fibers
of $\pi_I$, and let $\theta_X$ denote the connection 1-form. 
Then we have $\theta_X - \pi_I^*(\theta_X|_{TC_I}) = O(r)$.

Let $\pi : NC_I \to C_I$ be the symplectic normal bundle, 
i.e., $N_p C_I \subset T_pX$ is the orthogonal complement 
of $T_pC_I$ with respect to the symplectic form.
Let $\omega_N$ be the induced symplectic form on the fibers of
$NC_I$.
From the symplectic neighborhood theorem, a neighborhood of
$C_I$ is symplectomorphic to a neighborhood of the zero section
of $NC_I$ equipped with the symplectic form
$\pi^* (\omega_X|_{C_I}) + \omega_N$.
Identifying $NC_I$ with $E_I$, we obtain a symplectic form
$\omega_{U_I}$ on $U_I$ satisfying (i) and (iv).
Note that $\omega_{U_I}$ and $\omega_X$ coincide only on 
$TC_I$ in general.
Let $\theta_{U_I}$ be a connection 1-form on $o_X|_{U_I}$
such that $d \theta_{U_I} = \omega_{U_I}$ and
$\theta_{U_I}|_{TC_I} = \theta_X|_{TC_I}$.
We define $\eta = \theta_X - \theta_{U_I}$.
Then $\eta = 0$ on $C_I$.
Fix a constant $\delta >0$ such that $\{r \le \delta \} \subset U_I$
and take $C>0$ satisfying
\[
  \begin{cases}
    C^{-1} \omega_X \le t \omega_{U_I} + (1-t) \omega_X
    \le C \omega_X, \quad t \in [0,1],\\
    \| \eta \| \le Cr, \\
    \| dr \| \le C
  \end{cases}
\]
on $\{r \le \delta \}$.
Let $h : \bR \to \bR_{\ge 0}$ be a smooth function satisfying
\begin{itemize}
  \item $\lim_{s \to -\infty} h(s) = 1$,
  \item $h(s)=0$ for $s \ge \log \delta$, and
  \item $- 1/(2C^3) \le h'(s) \le 0$,
\end{itemize}
and set $f = h( \log r)$.
We define 
\[
  \theta' = \theta_X - f \eta
          = f \theta_{U_I} + (1-f) \theta_X
\]
and
\begin{align*}
  \omega' 
  &:= d \theta' 
  = f \omega_{U_I} + (1-f) \omega_X - df \wedge \eta \\
  &= f \omega_{U_I} + (1-f) \omega_X - h' dr \wedge \frac{\eta}{r}.
\end{align*}
Then $\omega'$ is compatible with $J_X$ along $C_I$ and the fibers 
of $\pi_I$ intersect $C_I$ orthogonally.
From the choice of $h$, we have
\[
  \| df \wedge \eta \| \le 
  \frac 1{2C^3} \cdot C \cdot C = \frac 1{2C},
\] 
which implies that $\omega'$ tames $J_X$, and hence it is non-degenerate.

By applying the argument in \cite[Lemma 1.7]{Seidel_LES} or
\cite[Lemma 4.3]{Ruan_II} to each fiber of $\pi_I$,
we can modify $\omega'$ to make $\omega'|_{\pi_I^{-1}(p)}$ 
standard at each $p \in C_I$, which means that 
$C_J$'s intersect orthogonally along $C_I$.
\end{proof}

Next we construct local torus actions.
Set $\scL_i = \scO (1) = \scO(C_i)$ for $i = 1, \dots, n+2$ and
$\scL_0 = \scO (n+2) = \scO(C_0)$.
Note that the normal bundle of $C_I$ is given by
\[
  \scN_{C_I/X} = \bigoplus_{i \in I} \scL_i |_{C_I}.
\]

For each $I = \{ i_1 < \dots < i_k \} \subset \{0, 1, \dots, n+2 \}$,
we define a $T^k$-action on $U_I^{\circ}$ as follows.
First we consider the case $0 \not\in I$.
Then we may assume that 
$(\prod_{j \not\in I \cup \{0\}} x_j)/ \sigma_{X,0} \ne 0$ on $U_I^{\circ}$
(after making $U_I$ smaller if necessary).
Then 
\[
  \bullet \otimes \frac{\prod_{j \not\in I \cup \{0\}} x_j}{\sigma_{X,0}} :
  \scL_{i_k} |_{U_I^{\circ}} \longrightarrow
  \scL_{i_k} \otimes \scL_0^{-1} \otimes 
  \Biggl. \bigotimes_{j \not\in I \cup \{0\}} \scL_j 
  \Biggr|_{U_I^{\circ}} \cong 
  \scO (1-k) |_{U_I^{\circ}}
\]
is an isomorphism, and thus we have
\[
  \scN_{C_I/X} |_{C_I^{\circ}}
  \cong \scN_I |_{C_I^{\circ}},
\]
where 
\begin{align*}
  \scN_I &:= \scL_{i_1} \oplus \dots \oplus \scL_{i_{k-1}} \oplus
             \biggl(  \scL_{i_k} \otimes \scL_0^{-1} \otimes 
             \bigotimes_{j \not\in I \cup \{0\}} \scL_j \biggr) \\
         &\cong \underbrace{\scO (1) \oplus \dots \oplus \scO (1)}_{k-1} 
                \oplus \scO (1-k).
\end{align*}
We identify $U_I^{\circ}$ with 
a neighborhood of the zero section of $\scN_I |_{C_I^{\circ}}$
by a map $\nu_I : U_I^{\circ} \to \scN_I |_{C_I^{\circ}}$
obtained by combining
\[
  \left( x_{i_1}, \dots, x_{i_{k-1}}, 
  \frac{x_{i_k} \prod_{j \not\in I \cup \{0\}} x_j}{\sigma_{X,0}} \right)
  : U_I^{\circ} \longrightarrow \scN_I
\]
with parallel transport along the fibers of 
$\pi_I : U_I^{\circ} \to C_I^{\circ}$.
The torus action on $U_I^{\circ}$ is defined to be the 
pull back the natural $T^k$-action on $\scN_I |_{C_I^{\circ}}$.
By construction, 
\begin{equation} \label{eq:comm_diag}
\begin{psmatrix}[rowsep=1cm]
 U_i^\circ & & \scN_I|_{C_I^\circ} \\
  & \bC &
\end{psmatrix}
\psset{shortput=nab,arrows=->,nodesep=3pt,labelsep=3pt}
\small
\ncline{1,1}{1,3}^{\nu_I}
\ncline{1,1}{2,2}_{p_X}
\ncline{1,3}{2,2}
\end{equation}
is commutative, where the right arrow is the natural map
\[
  \scN_I = \scO (1) \oplus \dots \oplus \scO (1)
  \oplus \scO (1-k) \longrightarrow \bC,
  \quad 
  (\zeta_1, \dots, \zeta_k) \longmapsto 
    \zeta_1 \dots \zeta_k.
\]
Hence $p_X = \sigma_{X, \infty}/\sigma_{X,0}$ is $T^k$-equivalent on $U_I^{\circ}$:
\[
  p_X (\rho_{I,s}(x)) = e^{\ii (s_1 + \dots + s_k)} p_X (x).
\]
Next we consider the case where $i_1 = 0 \in I$.
In this case we set 
\begin{align*}
  \scN_I &:= \scL_{i_1} \oplus \dots \oplus \scL_{i_{k-1}} \oplus
             \biggl(  \scL_{i_k} \otimes 
             \bigotimes_{j \not\in I} \scL_j \biggr) \\
         &\cong \scO (n+2) \oplus 
                \underbrace{\scO (1) \oplus \dots \oplus \scO (1)}_{k-2} 
                \oplus \scO (n+4-k).
\end{align*}
Assuming $\prod_{j \not\in I} x_j \ne 0$ on $U_I^{\circ}$, 
we have an isomorphism
\[
  \bigoplus_{i \in I} \scL_i |_{U_I^{\circ}} \longrightarrow 
  \scN_I |_{U_I^{\circ}}.
\]
By using
\[
  \left( \sigma_{X,0}, x_{i_2}, \dots, x_{i_{k-1}}, 
  x_{i_k} \prod_{j \not\in I} x_j \right) :
  U_I^{\circ} \longrightarrow \scN_I,
\]
we have a map $\nu_I : U_I^{\circ} \to \scN_I |_{C_I^{\circ}}$ 
identifying $U_I^{\circ}$ with a neighborhood the zero section, 
which gives a $T^k$-action on $U_I^{\circ}$ as above.
We also have a similar commutative diagram (\ref{eq:comm_diag})
where the right arrow in this case is
\[
   \scO (n+2) \oplus \scO (1) \oplus \dots \oplus \scO (1)
    \oplus \scO (n+4-k)
    \longrightarrow  \bC, \quad
    (\zeta_1, \dots, \zeta_k) \longmapsto 
    \frac{\zeta_2 \dots \zeta_k}{\zeta_1}.
\]
This means that $p_X$ is $T^k$-equivariant on $U_I^{\circ}$:
\[
  p_X (\rho_{I,s}(x)) = e^{\ii (-s_1 + s_2 + \dots + s_k)} p_X (x).
\]
We can easily check the compatibility of the above torus actions.
For example, we consider the case where 
$I = \{0, 1, \dots, k-1 \} \supset J = \{1, \dots, l\}$.
Take coordinates $(w_1, \dots, w_{n+1})$ around a point in $C_I$ such that 
$(w_1, \dots, w_k)$ gives fiber coordinates of $\pi_I$ corresponding to 
\[
  (\sigma_{X,0}, x_1, \dots, x_{k-2}, x_{k-1} \cdots x_{n+2}) : U_I \to \scN_I.
\]
Then the torus action is given by
\[
  (w_1, \dots, w_n) \longmapsto
  (e^{\ii s_1} w_1, \dots, e^{\ii s_k} w_k, w_{k+1}, \dots, w_{n+1}).
\]
On the other hand, since $\nu_J : U_J^{\circ} \to \scN_J|_{C_J^{\circ}}$
is obtained from 
\[
  \left( 
  x_1, \dots, x_{l-1}, \frac{x_l \dots x_{n+2}}{\sigma_{X,0}} 
  \right)
  : U_J^{\circ} \longrightarrow \scN_J, 
\]
$\nu_J$ restricted to $U_I^{\circ} \cap U_J^{\circ} \subset U_J^{\circ}$ is
given by
\[
  \nu_J (w_1, \dots, w_{n+1}) 
   = \left(w_2, \dots , w_l, \frac{w_{l+1} \dots w_k}{w_1} \right).
\]
This means that 
the torus action induced from $\rho_J$ is given by
\[
  (w_1, \dots, w_{n+1}) \longmapsto
  (w_1, e^{\ii s_2} w_2, \dots, e^{\ii s_{l+1}} w_{l+1}, w_{l+2}, \dots, w_{n+1}).
\]
(Note that $(w_1, w_{l+2}, \dots, w_{n+1})$ is a coordinate on 
the base $C_J \cap U_I$.) 
Other cases can be checked in similar ways.

By using the same argument as in \cite[Lemma 7.20]{Seidel_K3}, 
we have
\begin{prop}
There exists a K\"ahler form $\omega_X''$ in the class $[\omega_X]$ 
satisfying the conditions in Proposition \ref{prop:Ruan7.1}, and 
$\omega_X''|_{U_I^{\circ}}$ is invariant under the torus action
$\rho_I$ for each $I$.
\end{prop}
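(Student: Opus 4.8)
The plan is to obtain $\omega_X''$ from the form $\omega_X'$ of Proposition~\ref{prop:Ruan7.1} by averaging it over the local torus actions $\rho_I$ one stratum at a time, changing it only inside shrinking tubular neighborhoods of the $C_I$. I would run an induction on $|I|$, starting from the shallow strata with $|I|=2$ and ending with the deepest stratum $|I|=n+1$, carrying along at each stage the full list of conditions (i)--(iv) of Proposition~\ref{prop:Ruan7.1} together with the invariances already achieved.

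The observation that makes the single step work is that $\omega_X'$ is \emph{already} $\rho_I$-invariant along $C_I$ itself. Indeed, for $p\in C_I^\circ$ the map $\rho_{I,s}$ fixes $p$ and preserves the splitting $T_pX=T_pC_I\oplus\bigoplus_{i\in I}\scL_i|_p$, acting trivially on $T_pC_I$ and by a $J_X$-unitary rotation on each complex line $\scL_i|_p$. By conditions (i), (iii), (iv) the form $\omega_X'|_p$ is $J_X$-compatible and makes this splitting $\omega_X'$-orthogonal, hence is fixed by $(\rho_{I,s})_*$. Consequently the averaged form $\bar\omega_I:=\int_{T^{|I|}}\rho_{I,s}^*\omega_X'\,ds$ is $\rho_I$-invariant, closed, $J_X$-compatible along $C_I$, agrees with $\omega_X'$ along $C_I$, and satisfies $\bar\omega_I-\omega_X'=O(r_I)$, where $r_I$ is a $\rho_I$-invariant distance function from $C_I$.

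For the step attached to $I$: since $T^{|I|}$ is connected, $\rho_{I,s}$ is isotopic to the identity on $U_I^\circ$, so $\bar\omega_I-\omega_X'$ is exact; writing it as $d\beta_I$ via the standard fibrewise homotopy operator for the retraction $U_I^\circ\to C_I^\circ$ along the fibres of $\pi_I$, one gets a primitive with $\|\beta_I\|=O(r_I^2)$. Choosing a cutoff $\chi_I=\chi_I(r_I)$ equal to $1$ near $C_I$ and to $0$ outside a smaller neighborhood contained in $U_I^\circ$, I would set $\omega_X'':=\omega_X'+d(\chi_I\beta_I)$. Near $C_I$ this equals $\bar\omega_I$, hence is $\rho_I$-invariant; outside a neighborhood of $C_I$ it equals $\omega_X'$; and in the transition region the correction $\chi_I\,d\beta_I+d\chi_I\wedge\beta_I$ has norm $O(r_I)+O(r_I^2/\delta)=O(\delta)$, so for $U_I$ small enough $\omega_X''$ still tames $J_X$. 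It lies in $[\omega_X]$ because $\chi_I\beta_I$ extends by zero to a global $1$-form, and it again satisfies (i)--(iv) since it coincides with $\omega_X'$ on $C_I$ and outside $\bigcup_J C_J$. One then repeats with $\omega_X'$ replaced by $\omega_X''$ and $I$ by the next stratum, which is legitimate because (i)--(iv) --- and hence the invariance of the form along each stratum used above --- are preserved.

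Finally, the steps fit together because the actions $\rho_J$ are compatible in the sense verified just above: for $J\subset I$ the image of $\rho_J$ near the deeper stratum $C_I$ is a subtorus of the image of $\rho_I$ (in the displayed coordinates it rotates only a subset of the fibre coordinates $w_1,\dots,w_k$), so once the form is made $\rho_I$-invariant near $C_I$ it is automatically $\rho_J$-invariant there, while the modification performed earlier near the shallower $C_J$ lives over $C_J^\circ$ and is undisturbed away from $C_I$. The main obstacle is precisely this bookkeeping: one must choose the tubular neighborhoods $U_I$ and the cutoff scales $\delta$ in a nested, mutually compatible fashion so that every correction stays inside the taming cone and conditions (i)--(iv) hold simultaneously with all the invariances of $\omega_X''|_{U_I^\circ}$ under $\rho_I$. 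This is exactly the argument of \cite[Lemma~7.20]{Seidel_K3}, building on \cite[Section~7]{Ruan_II}, adapted to the normal crossing configuration $X_0\cup X_\infty$ treated here.
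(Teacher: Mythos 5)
Your averaging-and-cutoff argument is correct and is essentially the paper's proof: the paper disposes of this proposition in one line by invoking the same argument as \cite[Lemma 7.20]{Seidel_K3}, which is exactly the induction over strata you describe (average over $\rho_I$, note the form is already invariant along $C_I$ by (i), (iii), (iv), write the $O(r_I)$ discrepancy as $d\beta_I$ with $\beta_I=O(r_I^2)$, and interpolate with a cutoff so that taming survives). The compatibility of the $\rho_I$ for nested $I$, which you use to keep earlier invariances intact, is precisely what the paper verifies in the coordinate computation immediately preceding the proposition.
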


We fix $x \in C_I^{\circ}$ with $|I| = k$ and take a neighborhood
$U_x \subset U_I^{\circ}$ of $x$.
Let $Y \subset \bC^{n+1}$ be a small ball around the origin with the standard 
symplectic structure $\omega_Y$ and the $T^k$-action (\ref{eq:torus_action}).
Take a $T^k$-equivariant Darboux coordinate 
$\varphi : (U_x, \omega''_X) \to (Y, \omega_Y)$,
and define $J_Y = (\varphi^{-1})^* J_X$,
$p = (\varphi^{-1})^* p_X$, $\eta_Y = C (\varphi^{-1})^* \sigma_{X, \infty}^{-1}$,
where $C$ is a constant.
Then $(Y, \omega_Y, J_Y, \eta_Y, p)$ satisfies Assumption \ref{ass:local_model1} 
if $0 \not\in I$, or Assumption \ref{ass:local_model} if $0 \in I$
for a suitable choice of $C$.
%
%
Now we can follow the argument of \cite[Proposition 7.22]{Seidel_K3}
to complete the proof of Proposition \ref{prop:negativity}.

\section{Sheridan's Lagrangian as a vanishing cycle}
 \label{sc:sheridan_vc}

An $n$-dimensional pair of pants is
defined by
$$
 \scP^n
  = \lc [z_1 : \cdots : z_{n+2}] \in \bP_\bC^{n+1}
      \mid z_1 + \cdots + z_{n+2} = 0, \ 
           z_i \ne 0, \ i = 1, \ldots, n+2 \rc,
$$
equipped with the restriction
of the Fubini-Study K\"{a}hler form on $\bP_\bC^{n+1}$.
It is the intersection
of the hyperplane
$
 H = \{ z_1 + \cdots + z_{n+2} = 0 \}
$
with the big torus
$
 T
$
of $\bP_\bC^{n+1}$.
Sheridan \cite{Sheridan_pants} perturbs the standard double cover
$
 S^n \to H_\bR
$
of the real projective space $H_\bR \cong \bP_\bR^n$
by the $n$-sphere
slightly to obtain an exact Lagrangian immersion
$
 i : S^n \to \scP^n.
$
The real part $\scP^n \cap H_\bR$ of the pair of pants
consists of $2^{n+1} - 1$ connected components $U_K$
parametrized by proper subsets
$
 K \subset \{ 1, 2, \ldots, n + 2 \}
$
as
$$
 U_K = \{ [z_1 : \cdots : z_{n+2}] \in \scP^n \cap H_\bR \mid
  z_i / z_j < 0 \text{ if and only if } i \in K \text{ and } j \in K^c \}.
$$
Note that the set $\{ 1, \ldots, n + 2 \}$
has $2^{n+2} - 2$ proper subsets,
and one has $U_K = U_{K^c}$.
The inverse images of the connected component $U_K$
by the double cover
$
 S^n \to H_\bR
$
are the cells $W_{K, K^c, \emptyset}$ and
$W_{K^c, K, \emptyset}$
of the dual cellular decomposition
in \cite[Definition 2.6]{Sheridan_pants}.

The map
$
 p_\Mbar : \Mbar \to T
$
sending
$
 ( u_1, \ldots, u_{n+1},
    u_{n+2} = 1 / u_1 \cdots u_{n+1} )
$
to
$
 [z_1 : \cdots : z_{n+1} : 1]
$
for
$
 z_i = u_i \cdot u_1 \cdots u_{n+1},
$
$
 i = 1, \ldots, n + 1
$
is a principal $\Gamma_{n+2}^*$-bundle,
where the action of
$\zeta \cdot \id_{\bC^{n+2}} \in \Gamma_{n+2}^*$
sends
$
 (u_1, \ldots, u_{n+2})
$
to
$
 (\zeta u_1, \ldots, \zeta u_{n+2}).
$
The inverse map is given by
$
 u_1^{n+2} = z_1^{n+1}/z_2 \cdots z_{n+1}
$
and
$
 u_i = u_1 \cdot z_i / z_1
$
for $i = 2, \ldots, n+1$.
The restriction $p_{\Mbar_0} : \Mbar_0 \to \scP^n$
turns $\Mbar_0$ into a principal $\Gamma_{n+2}^*$-bundle
over the pair of pants.
One has
$$
 z_1 = - (1 + z_2 + \cdots + z_{n+1})
$$
on $\scP^n$,
so that
$
 u_1^{n+2}
  = (-1)^{n+1} f(z_2, \ldots, z_{n+1})
$
where
\begin{align} \label{eq:f}
 f(z_2, \ldots, z_{n+1})
  &= \frac{(1+z_2+\cdots+z_{n+1})^{n+1}}{z_2 \cdots z_{n+1}}.
\end{align}
The pull-back of Sheridan's Lagrangian immersion
by $p_{\Mbar_0}$
is the union of $n+2$ embedded Lagrangian spheres
$\{ L_i \}_{i=1}^{n+2}$ in $\Mbar_0$.

Recall that the {\em coamoeba}
of a subset of a torus $(\bCx)^{n+1}$
is its image by the argument map
$
 \Arg : (\bCx)^{n+1} \to \bR^{n+1} / 2 \pi \bZ^{n+1}.
$
Let $Z$ be the zonotope in $\bR^{n+1}$
defined as the Minkowski sum of
$
 \pi e_1, \ldots, \pi e_{n+1},
  - \pi e_1 - \cdots - \pi e_{n+1},
$
where $\{ e_i \}_{i=1}^{n+1}$ is
the standard basis of $\bR^{n+1}$.
The projection $\Zbar$ of $Z$ to $\bR^{n+1} / 2 \pi \bZ^{n+1}$ is
the closure of the complement
$(\bR^{n+1} / 2 \pi \bZ^{n+1}) \setminus \Arg(\scP^n)$
of the coamoeba of the pair of pants
\cite[Proposition 2.1]{Sheridan_pants},
and the argument projection of the immersed Lagrangian sphere
is close to the boundary of the zonotope
by construction
\cite[Section 2.2]{Sheridan_pants}.
The coamoeba of $\Mbar_0$ and the projections
of Lagrangian spheres $L_i$ are obtained
from those for $\scP^n$
as the pull-back by the $(n + 2)$-fold cover
\begin{equation} \label{eq:torus_covering}
\begin{array}{ccc}
 \bR^{n+1} / 2 \pi \bZ^{n+1} & \to & \bR^{n+1} / 2 \pi \bZ^{n+1} \\
 \vin & & \vin \\
 e_i & \mapsto & e_i + \sum_{j=1}^{n+1} e_j
\end{array}
\end{equation}
induced by $p_\Mbar : \Mbar \to T$.
It is elementary to see that
none of the pull-backs of the zonotope $\Zbar$
by the map \eqref{eq:torus_covering}
has self-intersections.
It follows that the argument projection of $L_i$
does not have self-intersections either,
which in turn implies that $L_i$ itself
does not have self-intersections,
so that $L_i$ is not only immersed but embedded.
We choose the numbering
on these embedded Lagrangian spheres
so that the argument projection of $L_i$ is close
to the boundary of the zonotope centered at
$
 [\frac{2 \pi}{n+2}(i, \ldots, i)]
   \in \bR^{n+1} / 2 \pi\bZ^{n+1}.
$

When $n = 1$,
the coamoeba of $\Mbar_0$ is the union of the interiors
and the vertices of six triangles
shown in Figure \ref{fg:coamoeba1}.
The projection of $L_{3}$ is also shown
as a solid loop in Figure \ref{fg:coamoeba1}.
The zonotope $\Zbar$ in this case is a hexagon,
whose pull-backs by the three-to-one map \eqref{eq:torus_covering}
are three hexagons
constituting the complement of the coamoeba.
Although the zonotope $\Zbar$ has self-intersections at its vertices,
none of its pull-backs has self-intersections
as seen in Figure \ref{fg:coamoeba1}.
The coamoeba of $\Mbar_0$ for $n=2$
is a four-fold cover
of the coamoeba of $\scP^2$ shown in
\cite[Figure 2(b)]{Sheridan_pants}.
\begin{figure}[htbp]
\begin{minipage}{.5 \linewidth}
\centering
\input{coamoeba1.pst}
\caption{The coamoeba}
\label{fg:coamoeba1}
\end{minipage}
\begin{minipage}{.5 \linewidth}
\centering
\input{coamoeba2.pst}
\caption{The cut and the thimble}
\label{fg:coamoeba2}
\end{minipage}
\end{figure}

Let
$
 \varpi : \Mbar_0 \to \bCx
$
be the projection
sending $(u_1, \cdots, u_{n+2})$ to $u_1$.

\begin{lemma}
The critical values of $\varpi$ are given by $(n+2)$ solutions
to the equation
\begin{equation} \label{eq:x1}
 u_1^{n+2} = (-1)^{n+1} (n+1)^{n+1}.
\end{equation}
\end{lemma}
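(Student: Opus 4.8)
The plan is to reduce the assertion to an elementary computation of the critical points of the rational function $f$ of \eqref{eq:f} on the pair of pants. Recall from the discussion preceding \eqref{eq:f} that on $\Mbar_0$ one has $\varpi^{n+2} = u_1^{n+2} = (-1)^{n+1}\, f \circ p_{\Mbar_0}$, where $p_{\Mbar_0} \colon \Mbar_0 \to \scP^n$ is the covering coming from the principal $\Gammaa^*$-bundle structure. Since $u_1 \cdots u_{n+2} = 1$ on $\Mbar_0$, the map $\varpi$ takes values in $\bCx$, on which $w \mapsto w^{n+2}$ and $w \mapsto (-1)^{n+1} w$ are local biholomorphisms; differentiating the displayed identity and using that $p_{\Mbar_0}$ is a covering map, one sees that $p \in \Mbar_0$ is a critical point of $\varpi$ if and only if $p_{\Mbar_0}(p)$ is a critical point of $f$, and that $\varpi(p)^{n+2} = (-1)^{n+1} f(p_{\Mbar_0}(p))$ for such $p$. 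Thus the set of critical values of $\varpi$ is the set of $(n+2)$-nd roots of $(-1)^{n+1}$ times the set of critical values of $f$ (each critical value of $f$ being realized, after pulling back, with all $n+2$ of its roots, because the deck group $\Gammaa^*$ scales $\varpi$ by the $(n+2)$-nd roots of unity), and it remains only to compute the critical values of $f$.

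The next step is the critical point computation for $f$. Writing $S = 1 + z_2 + \dots + z_{n+1}$, so that $z_1 = -S$ and $S, z_2, \dots, z_{n+1}$ are all nonzero on $\scP^n$, we have $\log f = (n+1) \log S - \sum_{k=2}^{n+1} \log z_k$ and hence $\partial \log f / \partial z_k = (n+1)/S - 1/z_k$ for $k = 2, \dots, n+1$. These vanish simultaneously precisely when $z_2 = \dots = z_{n+1} = S/(n+1)$; substituting back into $S = 1 + z_2 + \dots + z_{n+1}$ forces $S = n+1$, hence $z_2 = \dots = z_{n+1} = 1$. So $f$ has the single critical point $(1, \dots, 1)$, with critical value $S^{n+1}/(z_2 \cdots z_{n+1}) = (n+1)^{n+1}$. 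Combined with the previous paragraph, this shows that the critical values of $\varpi$ are exactly the $(n+2)$-nd roots of $(-1)^{n+1}(n+1)^{n+1}$, i.e. the $n+2$ solutions of \eqref{eq:x1}.

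A self-contained variant avoids \eqref{eq:f} and argues directly on $\Mbar_0 = \{u \in \bC^{n+2} \mid u_1 \cdots u_{n+2} = 1,\ u_1 + \dots + u_{n+2} = 0\}$. One first checks that $\Mbar_0$ is a smooth complete intersection: the differentials of $g_1 = u_1 \cdots u_{n+2} - 1$ and $g_2 = u_1 + \dots + u_{n+2}$ fail to be independent only where all the $u_i$ coincide, which is incompatible with $g_1 = g_2 = 0$. Therefore $p \in \Mbar_0$ is critical for $\varpi = u_1$ iff $du_1 = \lambda\, dg_1 + \mu\, dg_2$ at $p$; comparing coefficients of the $du_i$ forces $\lambda \neq 0$ and $u_2 = \dots = u_{n+2}$, after which the two defining equations give $u_1^{n+2} = (-1)^{n+1}(n+1)^{n+1}$. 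In either approach the computation is routine, and I do not expect a genuine obstacle; the only points worth a line of care are the non-vanishing of the denominators in play ($S$ and the $z_k$ on $\scP^n$, equivalently the $u_i$ on $\Mbar_0$, and the multiplier $\lambda$) and the fact that the critical-point equations have no solution other than the symmetric one — both immediate from the defining (in)equalities of the pair of pants.
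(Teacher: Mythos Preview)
Your proof is correct. Your second variant (Lagrange multipliers on the complete intersection $\{u_1\cdots u_{n+2}=1,\ u_1+\cdots+u_{n+2}=0\}$) is essentially the paper's argument: the paper eliminates $u_{n+2}=1/(u_1\cdots u_{n+1})$, obtains the single equation $\sum_{i=1}^{n+1} u_i\cdot u_1\cdots u_{n+1}+1=0$, sets the partials in $u_2,\dots,u_{n+1}$ to zero to get $u_2=\cdots=u_{n+1}=-u_1/(n+1)$, and substitutes back.

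Your first approach via the covering $p_{\Mbar_0}$ and the function $f$ of \eqref{eq:f} is a genuinely different route. It trades the direct Lagrange computation for the identity $\varpi^{n+2}=(-1)^{n+1}f\circ p_{\Mbar_0}$ together with the fact that $p_{\Mbar_0}$ is a local biholomorphism and $w\mapsto w^{n+2}$ is \'etale on $\bCx$; this reduces the problem to finding the complex critical points of $f$, which you do cleanly via logarithmic differentiation. What this buys you is that it simultaneously establishes (a complex version of) the next lemma in the paper, which computes the critical point and critical value of $f$ on the positive real chamber $U_1$; the paper proves that lemma separately by the same partial-derivative computation you carry out. So your first argument is slightly more economical in the context of the paper, while the second is closer to what the paper actually does.
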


\begin{proof}
The defining equation of $\Mbar_0$ in
$
 \Mbar = \Spec \bC[u_1^{\pm 1}, \ldots, u_{n+1}^{\pm 1}]
$
is given by
\begin{equation} \label{eq:Wfib}
 \sum_{i=1}^{n+1} u_i \cdot u_1 \cdots u_{n+1} + 1 = 0.
\end{equation}
By equating the partial derivatives
by $u_2, \ldots, u_{n+1}$ with zero,
one obtains the linear equations
$$
 u_i + \sum_{j=1}^{n+1} u_j = 0,
  \qquad i = 2, \ldots, n+1,
$$
whose solution is given by
$
 u_2 = \cdots = u_{n+1} = - u_1 / (n+1).
$
By substituting this into \eqref{eq:Wfib},
one obtains the desired equation \eqref{eq:x1}.
\end{proof}

Note that the connected component
\begin{align*}
 U_1
 &=U_{\{2, \ldots, n+2 \}}
 = \{ [z_1 : z_2 : \cdots : z_{n+1} : 1 ]
  \in \scP^n \mid
 (z_2, \ldots, z_{n+1}) \in (\bR^{>0})^n \}
\end{align*}
of the real part of the pair of pants
can naturally be identified with $(\bR^{>0})^n$.

\begin{lemma}
The function
$$
 f(z_2, \ldots, z_{n+1})
  = \frac{(1+z_2+\cdots+z_{n+1})^{n+1}}{z_2 \cdots z_{n+1}}
$$
has a unique non-degenerate critical point in $U_1 \cong (\bR^{>0})^n$
with the critical value $(n+1)^{n+1}$.
\end{lemma}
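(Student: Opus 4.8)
The plan is to pass to $\log f$, which is legitimate since $f>0$ on $U_1\cong(\bR^{>0})^n$ and has exactly the same critical points as $f$. Writing $S=1+z_2+\cdots+z_{n+1}$, one has
\[
 \log f = (n+1)\log S - \sum_{i=2}^{n+1}\log z_i,
\]
so that
\[
 \frac{\partial \log f}{\partial z_i} = \frac{n+1}{S} - \frac{1}{z_i},
 \qquad i=2,\ldots,n+1.
\]
The simultaneous vanishing of these derivatives forces $z_i = S/(n+1)$ for every $i$, hence $z_2=\cdots=z_{n+1}=:t$; substituting back into $t=(1+nt)/(n+1)$ yields $(n+1)t = 1+nt$, i.e. $t=1$. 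This shows that $z_2=\cdots=z_{n+1}=1$ is the unique critical point of $f$ in $U_1$, and evaluating $f$ there with $S=n+1$ and $z_2\cdots z_{n+1}=1$ gives the critical value $(n+1)^{n+1}$.

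For non-degeneracy I would compute the Hessian of $\log f$,
\[
 \frac{\partial^2 \log f}{\partial z_i\,\partial z_j}
  = -\frac{n+1}{S^2} + \frac{\delta_{ij}}{z_i^2},
\]
which at the critical point ($z_i=1$, $S=n+1$) equals $\delta_{ij}-\tfrac{1}{n+1}$; that is, the Hessian matrix is $I_n-\tfrac{1}{n+1}J$, where $J$ is the $n\times n$ all-ones matrix. Since $J$ has eigenvalue $n$ once and $0$ with multiplicity $n-1$, this matrix has eigenvalues $\tfrac{1}{n+1}$ (once) and $1$ (with multiplicity $n-1$), so it is positive definite. Because $f$ is positive and its gradient vanishes at this point, one has $\operatorname{Hess}(f)=f\cdot\operatorname{Hess}(\log f)$ there, so the Hessian of $f$ is positive definite as well; in particular the critical point is non-degenerate.

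I expect no real obstacle: the argument is a short computation of AM--GM flavour, and the only point deserving a word of care is the passage between the Hessians of $f$ and $\log f$, which is immediate at a critical point. Alternatively one can sidestep it by noting that $f(z_2,\ldots,z_{n+1})\to\infty$ as $(z_2,\ldots,z_{n+1})$ tends to the boundary of $(\bR^{>0})^n$ or to infinity, so that $f$ attains a global minimum on $U_1$, which must coincide with the unique critical point found above.
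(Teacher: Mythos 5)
Your proof is correct and follows essentially the same route as the paper: a direct computation of the critical equations (the paper differentiates $f$ itself, you differentiate $\log f$, which amounts to the same thing) leading to $z_2=\cdots=z_{n+1}=1$ and the critical value $(n+1)^{n+1}$. You are in fact more complete than the paper, since you explicitly verify non-degeneracy by diagonalizing the Hessian $I_n-\tfrac{1}{n+1}J$, a point the paper's proof leaves implicit.
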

\begin{proof}
The partial derivatives are given by
\begin{align*}
 \frac{\partial f}{\partial z_2}
  &= ((n+1)z_2-(1+z_2+\cdots+z_{n+1}))
 \frac{(1+z_2+\cdots+z_{n+1})^n}{z_2^2 z_3 \cdots z_{n+1}}
\end{align*}
and similarly for $z_3, \ldots, z_{n+1}$.
By equating them with zero,
one obtains the equations
\begin{align*}
 (n+1)z_i-(1+z_2+\cdots+z_{n+1}) = 1,
   \qquad i = 2, \ldots, {n+1}
\end{align*}
whose solution is given by
$
 z_2 = \cdots = z_{n+1} = 1
$
with the critical value
$
  (n+1)^{n+1}.
$
\end{proof}

As an immediate corollary,
one has:

\begin{corollary} \label{cor:lefschetz1}
The inverse image of
$
 f : U_1 \to \bR
$
at $t \in \bR$
is
\begin{itemize}
\item empty if $t < (n+1)^{n+1}$,
\item one point if $t = (n+1)^{n+1}$, and
\item diffeomorphic to $S^{n-1}$ if $t > (n+1)^{n+1}$.
\end{itemize}
\end{corollary}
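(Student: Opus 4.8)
The plan is to read this off from the preceding lemma together with the fact that $f$ is a proper function on $(\bR^{>0})^{n}$, after which everything is standard Morse theory.

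First I would verify that $f\colon (\bR^{>0})^{n} \to \bR$ is proper, i.e.\ that $f(z_2,\dots,z_{n+1})$ tends to $+\infty$ whenever $(z_2,\dots,z_{n+1})$ leaves every compact subset of $(\bR^{>0})^{n}$. Put $S = 1 + z_2 + \cdots + z_{n+1} > 1$. The arithmetic--geometric mean inequality gives $z_2\cdots z_{n+1} \le \bigl((S-1)/n\bigr)^{n}$, hence
\[
  f \;=\; \frac{S^{n+1}}{z_2\cdots z_{n+1}} \;\ge\; \frac{n^{n}\,S^{n+1}}{(S-1)^{n}},
\]
and the right-hand side tends to $+\infty$ both as $S \to 1^{+}$ and as $S \to +\infty$. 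If instead $S$ stays bounded, say $S \le M$, while some $z_i \to 0$, then $z_2\cdots z_{n+1} \le M^{n-1} z_i \to 0$ whereas $S^{n+1} \ge 1$, so again $f \to +\infty$. Since a sequence leaving every compact subset of $(\bR^{>0})^{n}$ has, after passing to a subsequence, either $S \to \infty$ or ($S$ bounded and some $z_i \to 0$), this shows that $\{\, t_1 \le f \le t_2\,\}$ is compact for all $t_1 \le t_2$, so $f$ is proper; in particular it is bounded below and attains its infimum.

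A point where $f$ attains its global minimum is a critical point, so by the preceding lemma it must be the unique critical point $(1,\dots,1)$, and $f \ge (n+1)^{n+1}$ on $U_1$ with equality only there. This gives the first two cases: $f^{-1}(t) = \emptyset$ for $t < (n+1)^{n+1}$ and $f^{-1}\bigl((n+1)^{n+1}\bigr) = \{(1,\dots,1)\}$. For $t > (n+1)^{n+1}$, the function $f$ has no critical point on the open set $\{\, f > (n+1)^{n+1}\,\}$, so the restriction
\[
  f\colon \{\, f > (n+1)^{n+1}\,\} \longrightarrow \bigl((n+1)^{n+1}, \infty\bigr)
\]
is a proper submersion, hence a locally trivial fibration by Ehresmann's theorem, and all of its fibres are diffeomorphic. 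Finally the preceding lemma says $(1,\dots,1)$ is a non-degenerate critical point, and as it is a global minimum its Hessian is positive definite; by the Morse lemma $f = (n+1)^{n+1} + x_1^2 + \cdots + x_n^2$ in suitable local coordinates, so the fibre $f^{-1}\bigl((n+1)^{n+1} + \epsilon\bigr)$ for small $\epsilon > 0$, and therefore every fibre $f^{-1}(t)$ with $t > (n+1)^{n+1}$, is diffeomorphic to $S^{n-1}$.

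There is no genuine difficulty here; the only step needing a little care is the properness estimate near the corners of $(\bR^{>0})^{n}$ where some coordinates go to $0$ while others go to $\infty$, which is why I separate the cases according to whether $S$ is bounded. Once properness is in hand, the statement is just the standard picture of a proper function with a single non-degenerate critical point.
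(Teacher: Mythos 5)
Your proof is correct, and it fills in precisely the details (properness of $f$ via the AM--GM estimate, the global minimum at the unique critical point, Ehresmann away from the critical value, and the Morse lemma at the minimum) that the paper suppresses by presenting the statement as an ``immediate corollary'' of the preceding lemma with no written proof. This is the intended argument, carried out carefully; the properness estimate near the boundary of $(\bR^{>0})^{n}$ is indeed the only step requiring any care.
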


Recall that $f$ is introduced in \eqref{eq:f}
to study the inverse image
of the map $p : \Mbar_0 \to \scP^n$.

\begin{corollary} \label{cor:lefschetz2}
The inverse image $p^{-1}(U_1)$ consists of $n+2$
connected components $U_\zeta$
indexed by solutions to the equation
$
 \zeta^{n+2} = (-1)^{n+1}(n+1)^{n+1}
$
by the condition that $\zeta \in \varpi(U_\zeta)$.
\end{corollary}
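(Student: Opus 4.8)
The plan is to combine the covering-space structure of $p \colon \Mbar_0 \to \scP^n$ with the relation $u_1^{n+2} = (-1)^{n+1} f(z_2,\ldots,z_{n+1})$ recorded in \eqref{eq:f}. First I would observe that $U_1 \cong (\bR^{>0})^n$ is contractible, so the restriction over $U_1$ of the principal $\Gamma_{n+2}^*$-bundle $p$ is trivial; since $\Gamma_{n+2}^*$ has order $n+2$, the inverse image $p^{-1}(U_1)$ is a disjoint union of exactly $n+2$ connected components, each mapped homeomorphically onto $U_1$ by $p$.

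Next I would determine the image of $\varpi = u_1$ on $p^{-1}(U_1)$. By Corollary~\ref{cor:lefschetz1} (together with the lemma preceding it), the function $f$ maps $U_1$ onto the ray $[(n+1)^{n+1},\infty)$ and attains its minimum $(n+1)^{n+1}$ at exactly one point of $U_1$, namely $z_2 = \cdots = z_{n+1} = 1$. Hence on $p^{-1}(U_1)$ the function $\varpi$ takes values in $S := \{ w \in \bCx \mid w^{n+2} \in (-1)^{n+1}[(n+1)^{n+1},\infty) \}$, which is a disjoint union of $n+2$ closed rays, one ray $R_\zeta$ emanating from each solution $\zeta$ of $\zeta^{n+2} = (-1)^{n+1}(n+1)^{n+1}$; note that $R_\zeta$ contains its own endpoint $\zeta$ but no other solution.

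Finally, on each connected component $U'$ of $p^{-1}(U_1)$ the continuous function $\varpi$ has connected image, so $\varpi(U') \subset R_\zeta$ for a unique solution $\zeta$. Since $p|_{U'}$ is a homeomorphism onto $U_1$, there is a unique point of $U'$ lying over $z_2 = \cdots = z_{n+1} = 1$, and at that point the $(n+2)$-th power of $\varpi$ equals $(-1)^{n+1}(n+1)^{n+1}$, which forces $\varpi = \zeta$ there; thus $\zeta \in \varpi(U')$, and $\zeta$ is the only solution lying in $\varpi(U')$. Because the $n+2$ components are permuted simply transitively by $\Gamma_{n+2}^*$, which acts on $u_1$ through multiplication by the $(n+2)$-th roots of unity, the assignment $U' \mapsto \zeta$ is a bijection from the set of components onto the set of solutions, and declaring $U_\zeta$ to be the component with $\zeta \in \varpi(U_\zeta)$ yields the statement. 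The only point that requires any care — and it is still routine — is verifying that the rays $R_\zeta$ are pairwise disjoint and that each contains its endpoint, which is exactly where the value and the uniqueness of the critical point of $f$ in $U_1$ enter and what makes the condition $\zeta \in \varpi(U_\zeta)$ select a single component.
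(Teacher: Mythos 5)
Your proof is correct and follows exactly the route the paper intends: the corollary is stated there without an explicit argument, as an immediate consequence of the covering structure of $p$, the relation $u_1^{n+2} = (-1)^{n+1} f$, and Corollary~\ref{cor:lefschetz1}. Your write-up simply makes the implicit steps (triviality of the bundle over the contractible set $U_1$, disjointness of the rays $R_\zeta$, and the $\Gamma_{n+2}^*$-equivariance of the labelling) explicit, and all of them check out.
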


One obtains an explicit description
of Lefschetz thimbles:

\begin{lemma} \label{lm:lefschetz3}
$U_\zeta$ is the Lefschetz thimble
for $\varpi : \Mbar_0 \to \bCx$
above the half line $\ell : [0, \infty) \to \bCx$
on the $x_1$-plane given by
$
 \ell(t) = t \zeta + \zeta.
$
\end{lemma}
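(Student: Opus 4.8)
The plan is to establish three facts and then invoke the standard characterisation of Lefschetz thimbles: (a) $\varpi$ has only non-degenerate critical points, so that the thimble over $\ell$ is defined; (b) $\varpi(U_\zeta)$ is exactly the half line $\ell([0,\infty))$, and the fibre of $\varpi|_{U_\zeta}$ over $\ell(t)$ is the vanishing $(n-1)$-sphere of an ordinary double point, collapsing to the critical point as $t\to 0$; and (c) $U_\zeta$ is Lagrangian. Granting these, $U_\zeta$ is a Lagrangian disc fibred over $\ell$ whose fibres are the vanishing cycles along $\ell$ and which collapses to the critical point over $\ell(0)=\zeta$, which is precisely the Lefschetz thimble over $\ell$ (cf.~\cite{Seidel_PL}).

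For (a) I would use the identity $u_1^{n+2}=(-1)^{n+1}f(z_2,\dots,z_{n+1})$ from \eqref{eq:f}, valid on $p^{-1}(U_1)$. Since the critical value $(n+1)^{n+1}$ of $f$ is nonzero, the branch of $w\mapsto((-1)^{n+1}w)^{1/(n+2)}$ that sends $(n+1)^{n+1}$ to $\zeta$ is a local biholomorphism, so near the critical point the function $\varpi=u_1$ is this local biholomorphism composed with $f$; by the lemma on the critical point of $f$, the function $f$ has there a non-degenerate complex Morse critical point, hence so does $\varpi$. By the cyclic symmetry permuting the $u_i$ the same holds at every critical point, so $\varpi$ is a Lefschetz fibration. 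The other critical values in \eqref{eq:x1} have the same modulus as $\zeta$ but lie on distinct rays, so $\ell$ meets the set of critical values only at $\ell(0)=\zeta$ and the Lefschetz thimble over $\ell$, i.e.\ the trace of the vanishing cycle under symplectic parallel transport along $\ell$, is well defined.

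For (b) I would note that, since $U_1\cong(\bR^{>0})^n$ is contractible, the covering $p^{-1}(U_1)\to U_1$ is trivial; so by Corollary \ref{cor:lefschetz2} each component $U_\zeta$ maps homeomorphically onto $U_1$, is in particular connected, and meets $\varpi^{-1}(\zeta)$ in the single critical point produced in the lemma on the critical values of $\varpi$. By Corollary \ref{cor:lefschetz1} one has $f(U_1)=[(n+1)^{n+1},\infty)$, so on $U_\zeta$ the quantity $(u_1/\zeta)^{n+2}=f/(n+1)^{n+1}$ runs over $[1,\infty)$; by connectedness, normalised by $u_1=\zeta$ at the critical point, only the positive real branch occurs, whence $u_1=\zeta\,(f/(n+1)^{n+1})^{1/(n+2)}$ and $\varpi(U_\zeta)=\{s\zeta\mid s\ge 1\}=\ell([0,\infty))$. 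Over $\ell(t)$ this gives $f=(1+t)^{n+2}(n+1)^{n+1}$, so $U_\zeta\cap\varpi^{-1}(\ell(t))$ is identified by $p$ with $f^{-1}\bigl((1+t)^{n+2}(n+1)^{n+1}\bigr)\subset U_1$, which by Corollary \ref{cor:lefschetz1} is a point for $t=0$ and an $(n-1)$-sphere for $t>0$, shrinking to the critical point as $t\to 0$.

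For (c), $U_1$ is a connected component of the fixed locus $\scP^n\cap H_\bR$ of complex conjugation on $\scP^n$, hence Lagrangian for the (conjugation anti-invariant) Fubini--Study form; as $p:\Mbar_0\to\scP^n$ is a holomorphic covering that is symplectic for the structures used in this section, the sheet $U_\zeta$ over $U_1$ is Lagrangian. Combining (a)--(c): $U_\zeta$ is a Lagrangian disc fibred over $\ell$ whose fibre over $\ell(t)$ for $t>0$ is a Lagrangian sphere bounding the Lagrangian sub-disc $U_\zeta\cap\varpi^{-1}(\ell([0,t]))$, which collapses to the critical point; this sphere is therefore the vanishing cycle of the ordinary double point over $\zeta$, and $U_\zeta$ is the Lefschetz thimble over $\ell$. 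I expect (a) to be the crux: one must genuinely verify that $\varpi$ is a Lefschetz fibration and, equivalently, that $U_\zeta\cap\varpi^{-1}(\ell(t))$ is the honest vanishing cycle of the singularity rather than merely some Lagrangian sphere in the fibre, and it is the local factorisation $u_1=(\text{local biholomorphism})\circ f$, together with the non-degeneracy of the critical point of $f$, that secures this.
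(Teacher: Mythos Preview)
Your overall strategy differs from the paper's and is mostly sound, but there is a gap in step~(c). The paper's proof is much shorter: it directly checks that $U_\zeta$ is invariant under symplectic parallel transport along $\ell$. At a non-critical $x\in U_\zeta$ the fibre $\scV_{x_1}=U_\zeta\cap\varpi^{-1}(x_1)$ is totally real in $\varpi^{-1}(x_1)$, so $T_x\varpi^{-1}(x_1)=\bC\cdot T_x\scV_{x_1}$; then any $X\in T_xU_\zeta$ that is $g$-orthogonal to $T_x\scV_{x_1}$ is $g$-orthogonal to the whole vertical tangent space (since $g(JY,X)=-\omega(Y,X)=0$ for $Y,X\in T_xU_\zeta$), hence horizontal. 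This is precisely the general principle you invoke at the end --- a Lagrangian fibred over a curve is parallel-transport invariant --- made explicit, and it shows that the real content of the lemma is the Lagrangian property together with the totally-real fibre, rather than the Morse-theoretic verifications in your (a) and (b).

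The gap is your justification that $U_\zeta$ is Lagrangian. The covering $p:\Mbar_0\to\scP^n$ is holomorphic, but the K\"ahler form on $\Mbar_0$ descends from the Fubini--Study form on $X=\bP^{n+1}$ in the $x$-coordinates (via $M\to\Mbar$, $u_k=x_k^{n+2}$), while the form on $\scP^n$ is the Fubini--Study form on a \emph{different} $\bP^{n+1}$ in the $z$-coordinates; $p$ has no reason to intertwine them. A correct argument: on $U_\zeta$ every $u_i$ has argument $\arg\zeta$ or $\arg\zeta+\pi$, so $U_\zeta$ is fixed by the involution $\tau\colon u_i\mapsto e^{2\sqrt{-1}\arg\zeta}\,\bar u_i$, which preserves $\Mbar_0$ (using $\zeta^{n+2}\in\bR$) and lifts to complex conjugation composed with a unitary diagonal rotation on the ambient projective space, hence is anti-symplectic for the Fubini--Study form. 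With (c) repaired this way your argument goes through; note that the paper's own step $g(zY,X)=0$ also tacitly uses that $U_\zeta$ is Lagrangian, so this point is needed in both approaches.
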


\begin{proof}
The restriction of $\varpi$ to $U_\zeta$
has a unique critical point at
$
 (x_1, \ldots, x_{n+1})
  = \frac{\zeta}{n+1} ( n + 1, -1, \ldots, -1 ).
$
For $x = (x_1, \ldots, x_{n+1}) \in U_\zeta$
outside the critical point,
the fiber
$
 \scV_{x_1} =
  U_\zeta \cap \varpi^{-1}(x_1)
$
is diffeomorphic to $S^{n-1}$
by Corollary \ref{cor:lefschetz1},
and it suffices to show that
the orthogonal complement of
$T_x \scV_{x_1}$
in $T_x U_\zeta$ is orthogonal
to $T_x \varpi^{-1}(x_1)$
with respect to the K\"{a}hler metric $g$ of $\Mbar_0$.
Let $X \in T_x U_\zeta$ be a tangent vector
orthogonal to $T_x \scV_{x_1}$.
Then it is also orthogonal to $T_x \varpi^{-1}(x_1)$
since any element in $T_x \varpi^{-1}(x_1)$
can be written as $z Y$
for $z \in \bC$ and $Y \in T_x \scV_{x_1}$,
so that
$
 g(z Y, X) = z g(Y, X) = 0.
$
\end{proof}

The following simple lemma is a key
to the proof of Proposition \ref{prop:sheridan_vc}:

\begin{lemma} \label{lem:non-intrsection}
$U_\zeta$ for $\arg \zeta \ne \pm \frac{n+1}{n+2} \pi$
does not intersect $L_{n+2}$.
\end{lemma}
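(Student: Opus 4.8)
The plan is to show that the argument projection $\Arg(U_\zeta)$ is a single point of $\bR^{n+1}/2\pi\bZ^{n+1}$, to identify it, and to check that it avoids the origin component of the pull-back of the zonotope $\Zbar$ --- near whose boundary the argument projection of $L_{n+2}$ is confined --- exactly when $\arg\zeta\ne\pm\tfrac{n+1}{n+2}\pi$. Since $U_\zeta\cap L_{n+2}\ne\emptyset$ would force $\Arg(U_\zeta)$ and $\Arg(L_{n+2})$ to meet, this is enough.

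First I would compute $\Arg(U_\zeta)$. By Corollary~\ref{cor:lefschetz2} the set $U_\zeta$ is a connected component of $p^{-1}(U_1)$, and on $U_1$ one has $z_2,\dots,z_{n+1}>0$, $z_{n+2}=1$ and $z_1=-(1+z_2+\cdots+z_{n+1})<0$. Feeding this into the explicit inverse of $p_{\Mbar}$, namely $u_i=u_1z_i/z_1$ for $i=2,\dots,n+1$ and $u_1^{n+2}=z_1^{n+1}/(z_2\cdots z_{n+1})$, shows that over $p^{-1}(U_1)$ the number $u_1^{n+2}$ is always a positive real multiple of $(-1)^{n+1}$, so $\arg u_1$ is locally constant there; since $U_\zeta$ is connected and $\varpi(U_\zeta)$ is a half-line of argument $\arg\zeta$ by Lemma~\ref{lm:lefschetz3}, we get $\arg u_1\equiv\arg\zeta$ on $U_\zeta$. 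From $z_i/z_1<0$ we get $\arg u_i\equiv\arg\zeta+\pi$ for $i=2,\dots,n+1$, whence $\arg u_{n+2}\equiv-\sum_{i=1}^{n+1}\arg u_i\equiv\arg\zeta+\pi$ using $(n+2)\arg\zeta\equiv(n+1)\pi\pmod{2\pi}$ from Corollary~\ref{cor:lefschetz2}. Thus $\Arg(U_\zeta)$ is the single point $P_\zeta=[(\arg\zeta,\arg\zeta+\pi,\dots,\arg\zeta+\pi)]$ --- consistently, the argument of the critical point $\tfrac{\zeta}{n+1}(n+1,-1,\dots,-1)$ of $\varpi|_{U_\zeta}$ identified in Lemma~\ref{lm:lefschetz3}.

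Then I would locate $P_\zeta$ relative to the zonotope. Applying the covering map \eqref{eq:torus_covering} to $P_\zeta$ and using $(n+2)\arg\zeta\equiv(n+1)\pi$, one finds that the image equals $(\pi,0,\dots,0)$ for every $\zeta$; lifted to $\bR^{n+1}$ this is the vertex $\pi e_1$ of $Z$. Inside $Z\subset[-\pi,\pi]^{n+1}$, the only points congruent to $(\pi,0,\dots,0)$ modulo $2\pi\bZ^{n+1}$ are $\pi e_1$ and $-\pi e_1=\pi e_2+\cdots+\pi e_{n+1}-\pi(e_1+\cdots+e_{n+1})$. The origin component of the pull-back of $\Zbar$ is the image in the torus of the convex body obtained by applying the linear inverse of \eqref{eq:torus_covering} to $Z$, and it is embedded (this is the self-intersection statement noted after \eqref{eq:torus_covering}); hence the preimages of $(\pi,0,\dots,0)$ lying in it are exactly the images of these two points, namely $\tfrac{\pi}{n+2}(n+1,-1,\dots,-1)$ and its negative. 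These are precisely $P_\zeta$ for $\arg\zeta=\pm\tfrac{n+1}{n+2}\pi$. So $P_\zeta$ lies in the origin component if and only if $\arg\zeta=\pm\tfrac{n+1}{n+2}\pi$; for every other admissible value of $\arg\zeta$ it sits at a positive distance from this compact set, hence from its boundary. As Sheridan's construction places $\Arg(L_{n+2})$ in an arbitrarily small neighborhood of that boundary, we may assume it misses $P_\zeta$, and therefore $U_\zeta\cap L_{n+2}=\emptyset$.

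The step I expect to cost the most is the comparison just carried out: identifying the origin component of the pull-back of $\Zbar$ with the linear image of the polytope $Z$, recognizing $(\pi,0,\dots,0)$ as the vertex of $Z$ through which all $n+2$ points $P_\zeta$ pass after projection and at which this image folds back on itself in the torus, and pinning down which two of its $n+2$ preimages land in the origin component. This is elementary once $Z$ is presented as the Minkowski sum of $\pi e_1,\dots,\pi e_{n+1},-\pi(e_1+\cdots+e_{n+1})$, but it is where the combinatorics of the zonotope has to be handled carefully.
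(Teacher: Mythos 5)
Your proposal is correct and follows the same route as the paper: both reduce the statement to the observation that $\Arg(U_\zeta)$ is the single point $(\arg\zeta,\arg\zeta+\pi,\dots,\arg\zeta+\pi)$ while $\Arg(L_{n+2})$ stays near the boundary of the pulled-back zonotope $Z_{n+2}$ generated by $\pi f_1,\dots,\pi f_{n+1},-\pi f_1-\cdots-\pi f_{n+1}$. The only difference is that you carry out explicitly the elementary check (via the covering map and the vertex $\pi e_1$ of $Z$) that this point misses $Z_{n+2}$ unless $\arg\zeta=\pm\frac{n+1}{n+2}\pi$, which the paper leaves to the reader.
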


\begin{proof}
The map
$
  \bR^{n+1}/2 \pi \bZ^{n+1} \to \bR^{n+1}/ 2 \pi \bZ^{n+1}
$
induced from the map $p : \Mbar \to T$
is given on coordinate vectors by
$
 e_i \mapsto e_i + \sum_{j=1}^{n+1} e_j.
$
The inverse map is given by
$
 e_i \mapsto f_i
  = e_i - \frac{1}{n+2} \sum_{j=1}^{n+1} e_j,
$
so that
the argument projection of $L_{n+2}$ is close
to the boundary of the zonotope $Z_{n+2}$ generated by
$
 \pi f_1, \ldots, \pi f_{n+1},
  - \pi f_1 - \cdots - \pi f_{n+1}.
$
The argument projection of $U_\zeta$ consists
of just one point
$
 (\arg(\zeta), \arg(\zeta) + \pi , \ldots, \arg(\zeta) + \pi),
$
which is disjoint from $Z_{n+2}$
if $\arg \zeta \ne \pm \frac{n+1}{n+2} \pi$.
\end{proof}

The $n = 1$ case is shown in Figure \ref{fg:coamoeba2}.
Black dots
are images of $U_\zeta$ for
$
 \zeta = \sqrt[3]{4},
$
$
 \sqrt[3]{4} \exp(2 \pi \sqrt{-1}/3),
$
$
 \sqrt[3]{4} \exp(4 \pi \sqrt{-1}/3),
$
and white dots are images of $\Mbar_0 \setminus E$
defined below.
One can see that $L_3$ is contained in $E$ and
disjoint from $U_{\sqrt[3]{4}}$.

Now we use symplectic Picard-Lefschetz theory
developed by Seidel \cite{Seidel_PL}.
Put
$
 S = \bCx \setminus (-\infty, 0)
$
and let
$
 E = \varpi^{-1}(S)
$
be an open submanifold of $\Mbar_0$.
Note that both $V_{n+2}$ and $L_{n+2}$ are contained in $E$.
The restriction
$
 \varpi_E : E \to S
$
of $\varpi$ to $E$ is an exact symplectic Lefschetz fibration,
in the sense that all the critical points are non-degenerate
with distinct critical values.
Although $\varpi_E$ does not fit in the framework of Seidel
\cite[Section I\!I\!I]{Seidel_PL}
where the total space of a fibration
is assumed to be a compact manifold with corners,
one can apply the whole machinery of \cite{Seidel_PL}
by using the tameness of $\varpi_E$
(i.e., the gradient of $\| \varpi_E \|$ is bounded from below
outside of a compact set by a positive number)
as in \cite[Section 6]{Seidel_suspension}.
Let $\Fuk (\varpi_E)$ be the Fukaya category
of the Lefschetz fibration in the sense of Seidel
\cite[Definition 18.12]{Seidel_PL}.
It is the $\bZ / 2 \bZ$-invariant part
of the Fukaya category of the double cover
$\Etilde \to E$
branched along $\varpi_E^{-1}(*)$,
where $* \in S$ is a regular value of $\varpi_E$.
Different base points $* \in S$
lead to symplectomorphic double covers,
so that the quasi-equivalence class of $\Fuk (\varpi_E)$
is independent of this choice.
We choose $*$ to be a sufficiently large real number.
Let $(\gamma_1, \ldots, \gamma_{n+2})$
be a distinguished set of vanishing paths
chosen as in Figure \ref{fg:xplane_vp1}.
The pull-backs of the corresponding Lefschetz thimbles in $E$
by the double cover $\Etilde \to E$ will be denoted by
$(\Deltatilde_1, \ldots, \Deltatilde_{n+2})$,
which are called type (B) Lagrangian submanifolds
by Seidel \cite[Section (18a)]{Seidel_PL}.
On the other hand,
the pull-back of a closed Lagrangian submanifold of $E$,
which is disjoint from the branch locus,
is a Lagrangian submanifold of $\Etilde$
consisting of two copies of the original Lagrangian submanifold.
It also gives rise to an object of $\Fuk(\varpi_E)$,
which is called a type (U) Lagrangian submanifold by Seidel.
The letters
(B) and (U) stand for `branched' and `unbranched' respectively.

\begin{theorem}[{Seidel \cite[Propositions 18.13,
18.14, and 18.17]{Seidel_PL}}]
\ \vspace{-5mm}\\
\begin{itemize}
 \item
$(\Deltatilde_1, \ldots, \Deltatilde_{n+2})$
is an exceptional collection in $\Fuk (\varpi_E)$.
 \item
There is a cohomologically full and faithful $A_\infty$-functor
$\Fuk (E) \to \Fuk (\varpi_E)$.
 \item
The essential image of $\Fuk (E)$ is contained
in the full triangulated subcategory
generated by
$(\Deltatilde_1, \ldots, \Deltatilde_{n+2})$.
\end{itemize}
\end{theorem}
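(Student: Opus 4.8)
All three assertions are instances of Seidel's theory of Fukaya categories of exact Lefschetz fibrations \cite[Propositions 18.13, 18.14 and 18.17]{Seidel_PL}, so the plan is to check that our $\varpi_E : E \to S$ falls within that machinery and then transcribe the arguments. The one point outside the literal setting of \cite[Section I\!I\!I]{Seidel_PL} is that $E$ is not a compact manifold with corners; as noted above $\varpi_E$ is tame, and following \cite[Section 6]{Seidel_suspension} one verifies that the $C^0$-estimates and Gromov compactness used throughout \cite{Seidel_PL} remain valid for pseudoholomorphic sections of $\varpi_E$ and for discs with boundary on the $\Deltatilde_i$ and on type (U) Lagrangians, so that $\Fuk(\varpi_E)$, the branched double cover $\Etilde$, and the functor below are all well defined. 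This tameness reduction is really the only content specific to the present situation.

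Granting it, the first assertion holds essentially by construction: each $\Deltatilde_i$ is the pull-back under $\Etilde \to E$ of the Lefschetz thimble over $\gamma_i$, a closed exact Lagrangian sphere in $\Etilde$, and the directed structure built into $\Fuk(\varpi_E)$ --- morphism spaces are computed after perturbing the vanishing paths in the cyclic order of Figure \ref{fg:xplane_vp1} --- forces $\hom(\Deltatilde_j, \Deltatilde_i) = 0$ for $i < j$ and $\hom(\Deltatilde_i, \Deltatilde_i) \cong \bC$ spanned by the identity, which is precisely the statement that $(\Deltatilde_1, \ldots, \Deltatilde_{n+2})$ is an exceptional collection. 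For the second assertion the functor sends a closed exact Lagrangian $L \subset E$ disjoint from the branch fibre $\varpi_E^{-1}(*)$ to the associated type (U) object $\widetilde{L} \subset \Etilde$, two copies of $L$ interchanged by the deck involution, equipped with its tautological $\bZ/2$-equivariant structure; intersection points and pseudoholomorphic polygons upstairs occur in involution-swapped pairs, so on $\bZ/2$-invariants one recovers $\hom_{\Fuk(E)}(L_0, L_1)$ together with all of its $A_\infty$-operations, giving a cohomologically full and faithful functor.

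The third assertion is the one genuinely non-formal step. I would argue, as in \cite{Seidel_PL}, that a type (U) object $\widetilde{L}$ can be rebuilt from the thimbles by iterated mapping cones: sweeping the reference fibre of $\varpi_E$ across the critical values and invoking the exact triangles of symplectic Picard-Lefschetz theory expresses $\widetilde{L}$ as a twisted complex in $(\Deltatilde_1, \ldots, \Deltatilde_{n+2})$, because a closed Lagrangian in the total space of a Lefschetz fibration is generated by its vanishing cycles. Accordingly, the main obstacle is not any individual bullet but the interplay of this generation statement with the non-compactness of $E$ --- one needs the relevant Floer groups to be finite-dimensional and the sequence of exact triangles to terminate --- and both facts are again bought by the tameness of $\varpi_E$, which confines all pseudoholomorphic curves to a fixed compact region. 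Once that is in hand, the proofs of Propositions 18.13, 18.14 and 18.17 of \cite{Seidel_PL} apply essentially verbatim.
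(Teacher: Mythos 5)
Your proposal matches the paper's treatment: the paper does not reprove these statements but cites Seidel's Propositions 18.13, 18.14 and 18.17 directly, with the only new ingredient being exactly the point you identify, namely that the non-compactness of $E$ is handled by the tameness of $\varpi_E$ as in Seidel's suspension paper. Your additional sketches of the three bullets are consistent with Seidel's arguments, so the approach is essentially the same.
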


We abuse the notation
and use the same symbol $L_{n+2}$
for the corresponding object
in $\Fuk (\varpi_E)$.
The following lemma is a consequence
of Lemma \ref{lem:non-intrsection}:

\begin{lemma}
One has
$
 \Hom_{\Fuk (\varpi_E)}^*(\Deltatilde_i, L_{n+2}) = 0
$
for $i \ne 1, n + 2$.
\end{lemma}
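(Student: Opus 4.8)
The plan is to show that for $i \neq 1, n+2$ the Lefschetz thimble $\Delta_i$ in $E$ underlying $\Deltatilde_i$ admits a representative that is \emph{disjoint} from $L_{n+2}$, so that the Floer complex computing $\Hom^*_{\Fuk(\varpi_E)}(\Deltatilde_i, L_{n+2})$ has no generators. First I would recall that the critical values of $\varpi_E$ are the $n+2$ roots $\zeta$ of $\zeta^{n+2} = (-1)^{n+1}(n+1)^{n+1}$, which lie on the circle of radius $(n+1)^{(n+1)/(n+2)}$ and have arguments $\frac{(2i-n-3)\pi}{n+2}$, $i = 1, \dots, n+2$, the two extreme ones being $\mp\frac{n+1}{n+2}\pi$. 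The distinguished set $(\gamma_i)$ of Figure \ref{fg:xplane_vp1} is numbered so that $\gamma_i$ ends at the critical value $\zeta_i$ with $\arg \zeta_i = \frac{(2i-n-3)\pi}{n+2}$; thus $\zeta_1$ and $\zeta_{n+2}$ are exactly the two critical values at the arguments $\pm\frac{n+1}{n+2}\pi$ excluded in Lemma \ref{lem:non-intrsection}. By Lemma \ref{lm:lefschetz3}, $U_{\zeta_i}$ is the Lefschetz thimble of $\varpi$ over the outgoing ray $\ell_i(t) = (t+1)\zeta_i$, $t \in [0,\infty)$, and since $\varpi$ is holomorphic for the K\"ahler metric on $\Mbar_0$ this coincides with the symplectic Lefschetz thimble of $\varpi_E$ over $\ell_i$, the required tameness of $\varpi_E$ at its non-compact end being as in \cite[Section 6]{Seidel_suspension}.

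Next I would use that the vanishing paths of Figure \ref{fg:xplane_vp1} are drawn so that, for $i \neq 1, n+2$, the path $\gamma_i$ follows $\ell_i$ out from $\zeta_i$ to a point of large modulus and then runs around to the large real base point $*$ while staying at large modulus. Because $L_{n+2}$ is compact and contained in $\Mbar_0$, where $u_1 = \varpi$ is nowhere zero, the image $\varpi(L_{n+2})$ is a compact subset of $\bCx$; I fix $r$ with $\varpi(L_{n+2}) \subset \{\,|w| \le r\,\}$ and take $*$ and the turning point of $\gamma_i$ of modulus $> r$. Then the part of $\Delta_i$ lying over $\gamma_i \cap \{\,|w| > r\,\}$ is disjoint from $L_{n+2}$ for the trivial reason that it projects off $\varpi(L_{n+2})$, while the part of $\Delta_i$ lying over $\gamma_i \cap \{\,|w| \le r\,\} = \ell_i \cap \{\,|w| \le r\,\}$ is contained in $U_{\zeta_i}$ by the previous paragraph and hence is disjoint from $L_{n+2}$ by Lemma \ref{lem:non-intrsection}, since $\arg \zeta_i \neq \pm\frac{n+1}{n+2}\pi$ for $i \neq 1, n+2$. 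Therefore $\Delta_i \cap L_{n+2} = \emptyset$ in $E$.

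Finally, since $*$ has modulus $> r$, the Lagrangian $L_{n+2}$ is disjoint from the branch locus $\varpi_E^{-1}(*)$, so it is a type (U) object and its preimage $\widetilde{L}_{n+2}$ in the branched double cover $\Etilde \to E$ consists of two disjoint copies of $L_{n+2}$; these are disjoint from the preimage $\Deltatilde_i$ of $\Delta_i$, because $\Delta_i$ and $L_{n+2}$ are disjoint downstairs. As $\Hom^*_{\Fuk(\varpi_E)}(\Deltatilde_i, L_{n+2})$ is a direct summand of the Floer cohomology of $\Deltatilde_i$ and $\widetilde{L}_{n+2}$ in $\Etilde$, and the latter is computed with a perturbation that can be taken small enough to keep these two closed Lagrangians disjoint, it vanishes.

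The step that genuinely needs care is the one in the first two paragraphs: identifying the symplectic Lefschetz thimble of $\varpi_E$ over a segment of $\ell_i$ with the corresponding piece of $U_{\zeta_i}$ -- i.e.\ verifying that Seidel's symplectic parallel transport for $\varpi_E$ matches the metric/gradient-flow description of $U_{\zeta_i}$ used in the proof of Lemma \ref{lm:lefschetz3}, and controlling the behaviour at the non-compact end. This is precisely the compatibility of the K\"ahler and symplectic pictures for the holomorphic map $\varpi$, already implicit in Lemma \ref{lm:lefschetz3} and in the surrounding use of \cite{Seidel_PL} and \cite{Seidel_suspension}.
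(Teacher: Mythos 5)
Your proof is correct and follows essentially the same route as the paper's: both arguments reduce to Lemma \ref{lem:non-intrsection} via the identification (Lemma \ref{lm:lefschetz3}) of the Lefschetz thimble over the radial ray through a critical value $\zeta_i$ with $U_{\zeta_i}$. The only difference is one of bookkeeping: the paper moves the base point $*$ onto that ray and invokes Hamiltonian isotopy invariance, whereas you keep $*$ fixed on the positive real axis and dispose of the remaining large-modulus portion of the vanishing path using the compactness of $\varpi(L_{n+2})$.
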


\begin{proof}
For $2 \le i \le n + 1$,
move $* \in S$ continuously
from the positive real axis to
$$
 *' = \exp[(- n - 3 + 2 i) \pi \sqrt{-1} / (n+2)] \cdot *
$$
and move the distinguished set
$(\gamma_1, \ldots, \gamma_{n+2})$
of vanishing paths
in Figure \ref{fg:xplane_vp1}
to
$(\gamma_1', \ldots, \gamma_{n+2}')$
in Figure \ref{fg:xplane_vp2}
accordingly.
The corresponding double covers
$\Etilde$ and $\Etilde'$
are related by a Hamiltonian isotopy
sending type (B) Lagrangian submanifolds
$(\Deltatilde_1, \ldots, \Deltatilde_{n+2})$
of $\Etilde$
to type (B) Lagrangian submanifolds
$(\Deltatilde_1', \ldots, \Deltatilde_{n+2}')$
of $\Etilde'$.
It follows from Lemma \ref{lem:non-intrsection}
that the type (U) Lagrangian submanifold of $\Etilde'$
associated with $L_{n+2}$ does not intersect with $\Deltatilde_i'$.
This shows that
$
 \Hom_{\Fuk (\varpi_{E'})}^*(\Deltatilde_i', L_{n+2}) = 0,
$
which implies
$
 \Hom_{\Fuk (\varpi_E)}^*(\Deltatilde_i, L_{n+2}) = 0
$
by Hamiltonian isotopy invariance
of the Floer cohomology.
\begin{figure}
\scalebox{.9}{
\begin{minipage}{.35 \linewidth}
\centering
\scalebox{.9}{\input{xplane_vp1.pst}}
\caption{A distinguished set of vanishing paths}
\label{fg:xplane_vp1}
\end{minipage}}
\scalebox{.9}{
\begin{minipage}{.35 \linewidth}
\centering
\scalebox{.9}{\input{xplane_vp2.pst}}
\caption{Another distinguished set of vanishing paths}
\label{fg:xplane_vp2}
\end{minipage}}
\scalebox{.9}{
\begin{minipage}{.35 \linewidth}
\centering
\scalebox{.9}{\input{Pn_mp.pst}}
\caption{The matching path}
\label{fg:Pn_mp}
\end{minipage}}
\end{figure}
\end{proof}

It follows that $L_{n+2}$ belongs to the triangulated subcategory
generated by the exceptional collection
$(\Deltatilde_1, \Deltatilde_{n+2})$.
Since $L_{n+2}$ is exact,
the Floer cohomology of $L_{n+2}$ with itself is isomorphic
to the classical cohomology of $L_{n+2}$.

\begin{lemma}[{Seidel \cite[Lemma 7]{Seidel_GKQ}}]
Let $\scT$ be a triangulated category
with a full exceptional collection $(\scE, \scF)$
such that
$
 \Hom^*(\scE, \scF) \cong H^*(S^{n-1}; \bC),
$
and $L$ be an object of $\scT$
such that
$
 \Hom^*(L, L) \cong H^*(S^n; \bC).
$
Then $L$ is isomorphic
to the mapping cone
$
 \Cone(\scE \to \scF)
$
over a non-trivial element in $\Hom^0(\scE, \scF) \cong \bC$
up to shift.
\end{lemma}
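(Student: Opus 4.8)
The plan is to prove the two implications of the lemma separately. Fix a nonzero $e\in\Hom^0(\scE,\scF)\cong\bC$ and put $C=\Cone(\scE\xto{e}\scF)$; because $\Hom^0(\scE,\scF)$ is one-dimensional, all nonzero choices of $e$ differ by a scalar and give isomorphic objects $C$. I will first show $\Hom^*(C,C)\cong H^*(S^n;\bC)$, and then show conversely that an object $L$ with $\Hom^*(L,L)\cong H^*(S^n;\bC)$ must be isomorphic, up to shift, to such a $C$.

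For the first implication I would run the standard long exact sequences attached to the triangle $\scE\xto{e}\scF\to C\to\scE[1]$, using only the exceptionality hypotheses $\Hom^*(\scF,\scE)=0$ and $\Hom^*(\scE,\scE)\cong\Hom^*(\scF,\scF)\cong\bC$ in degree $0$, together with $\Hom^*(\scE,\scF)\cong H^*(S^{n-1};\bC)$. Applying $\Hom^*(\scF,-)$ gives $\Hom^*(\scF,C)\cong\bC$ in degree $0$; applying $\Hom^*(\scE,-)$, and using that postcomposition with $e$ is an isomorphism $\Hom^0(\scE,\scE)\xto{\sim}\Hom^0(\scE,\scF)$ (as $e\neq0$ and the target is one-dimensional), gives $\Hom^*(\scE,C)\cong\bC$ in degree $n-1$. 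Now applying $\Hom^*(-,C)$ to the triangle, the connecting map $\Hom^*(\scF,C)\to\Hom^*(\scE,C)$ is precomposition with $e$; its source sits in degree $0$ and its target in degree $n-1>0$ (here $n\ge2$), so it vanishes, the long exact sequence splits into short exact sequences, and $\Hom^*(C,C)$ receives exactly $\Hom^*(\scF,C)\cong\bC$ in degree $0$ and $\Hom^*(\scE[1],C)\cong\bC$ in degree $n$, and nothing else. Hence $\Hom^*(C,C)\cong H^*(S^n;\bC)$.

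For the converse, since $(\scE,\scF)$ is a full exceptional collection with $\Hom^*(\scF,\scE)=0$, there is a semiorthogonal decomposition $\scT=\langle\langle\scF\rangle,\langle\scE\rangle\rangle$, and hence a distinguished triangle $B_L\to L\to A_L\to B_L[1]$ with $A_L$ in the triangulated subcategory generated by $\scE$ and $B_L$ in that generated by $\scF$. Because $\scE$ and $\scF$ are exceptional with endomorphism algebra $\bC$ concentrated in degree $0$, each of these subcategories is equivalent to the category of finite-dimensional graded $\bC$-vector spaces, so $A_L\cong\bigoplus_j\scE[-j]^{\oplus a_j}$ and $B_L\cong\bigoplus_j\scF[-j]^{\oplus b_j}$ for finitely many nonnegative integers $a_j,b_j$, with connecting morphism some $\phi\in\Hom^1(A_L,B_L)$; equivalently, $L$ is represented by a minimal twisted complex built from shifts of $\scE$ and $\scF$ whose differential necessarily runs from the $\scE$-summands to the $\scF$-summands, since the other components would have to be isomorphisms and minimality forbids that. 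From $\Hom^0(L,L)\cong\bC$ the object $L$ is indecomposable, and from $\Hom^{<0}(L,L)=0$ it has no negative self-extensions. One checks $A_L\neq0$ (otherwise $L$ is a direct sum of shifts of $\scF$, so $\Hom^*(L,L)$ is concentrated in degree $0$, and being one-dimensional there forces $L\cong\scF[-j]$, which has vanishing $\Hom^n$), and likewise $B_L\neq0$. Comparing degrees in the long exact sequences computing $\Hom^*(L,L)$ from $\Hom^*(A_L,A_L)$, $\Hom^*(B_L,B_L)$ and $\Hom^*(A_L,B_L)$ (note $\Hom^*(B_L,A_L)=0$), the requirements that $\Hom^*(L,L)$ be one-dimensional in degree $0$, vanish in negative degrees, and vanish in degrees $1,\dots,n-1$ force $A_L\cong\scE[-j_0]$ and $B_L\cong\scF[-j_0-1]$ for a single integer $j_0$: any higher multiplicity, any extra shift, or the competing possibility in which $\phi$ lies in the degree-$(n-1)$ summand of $\Hom^*(\scE,\scF)$ produces either an extra class in $\Hom^{\le0}(L,L)$ or a class in a forbidden degree such as $2-n$. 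Finally $\phi\in\Hom^1(\scE[-j_0],\scF[-j_0-1])\cong\Hom^0(\scE,\scF)\cong\bC$ must be nonzero, for otherwise $L\cong\scE[-j_0]\oplus\scF[-j_0-1]$ is decomposable; therefore $L\cong\Cone(\scE\xto{e'}\scF)[-j_0]=C[-j_0]$ for a nonzero $e'$, which is the assertion.

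The hardest step is the combinatorial one in the converse that pins down $A_L$ and $B_L$ from the bare datum $\Hom^*(L,L)\cong H^*(S^n;\bC)$. I would organize it through the minimal twisted-complex model of $L$: minimality removes all identity-type components of the differential, so the differential is a matrix of classes in $\Hom^0(\scE,\scF)$ and $\Hom^{n-1}(\scE,\scF)$ running from the $\scE$-summands to the $\scF$-summands; the diagonal endomorphisms of such a complex then force $\dim\Hom^0(L,L)$ to grow with the number of summands, and together with $\Hom^{<0}(L,L)=0$ this leaves only one $\scE$-summand and one $\scF$-summand joined by a single nonzero, necessarily degree-$0$, structure map. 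This is routine but genuinely delicate bookkeeping with shifts, and is where essentially all of the content beyond formal triangulated manipulations lies.
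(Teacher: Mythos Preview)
The paper does not give its own proof of this lemma; it is quoted verbatim from Seidel's paper \cite[Lemma 7]{Seidel_GKQ} and used as a black box. So there is no in-paper argument to compare your proposal against.

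That said, your proof is sound. The forward computation that $\Hom^*(C,C)\cong H^*(S^n;\bC)$ is clean and correct (and, while not part of the stated lemma, is a useful consistency check). For the converse you use exactly the right structural input: the semiorthogonal decomposition coming from the length-two exceptional collection, which writes $L$ as a twisted complex on shifts of $\scE$ and $\scF$ with differential landing only in $\Hom^*(\scE,\scF)$. The reductions $A_L\neq 0$, $B_L\neq 0$, and $\phi\neq 0$ are correctly argued. The one place where you wave your hands is the ``delicate bookkeeping'' pinning $A_L$ and $B_L$ down to single shifted copies; this step is genuinely the content of the lemma, and while your sketch (counting contributions to $\Hom^0$ and to the forbidden degrees) points at the right mechanism, a referee would want to see it written out. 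One efficient way to organise it: take $j_0$ minimal with $a_{j_0}\neq 0$ or $b_{j_0}\neq 0$, and show that any further summand in a strictly larger shift produces a nonzero class in $\Hom^{<0}(L,L)$ or forces $\dim\Hom^0(L,L)>1$; then rule out the possibility that $\phi$ uses the degree-$(n-1)$ part of $\Hom^*(\scE,\scF)$ by observing it would yield $\Hom^{1-n}(L,L)\neq 0$. With that paragraph filled in, your argument is complete and matches the standard proof.
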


This shows that $L_{n+2}$
is isomorphic to
$\Cone(\Deltatilde_1 \to \Deltatilde_{n+2})$
in $D^\pi \scF (\varpi_E)$
up to shift.
On the other hand,
it is shown in \cite[Section 5]{Futaki-Ueda_Pn}
that $V_{n+2}$ is isomorphic to the matching cycle
associated with the matching path $\mu_{n+2}$
shown in Figure \ref{fg:Pn_mp}
(cf. \cite[Figure 5.2]{Futaki-Ueda_Pn}).
Here, a {\em matching path} is a path
on the base of a Lefschetz fibration
between two critical values,
together with additional structures
which enables one to construct a Lagrangian sphere
(called the {\em matching cycle})
in the total space by arranging vanishing cycles
along the path
\cite[Section (16g)]{Seidel_PL}.
Since the matching path $\mu_{n+2}$ does not intersect
$\gamma_i$ for $i \ne 1, n+2$,
the vanishing cycle $V_{n+2}$ is also orthogonal
to $\Deltatilde_2, \ldots, \Deltatilde_{n+1}$
in $D^\pi \Fuk (\varpi_E)$.
It follows that $L_{n+2}$ equipped with a suitable grading
is isomorphic to $V_{n+2}$ in $\scF(E)$.
Note that any holomorphic disk in $\Mbar_0$
bounded by $L_{n+2} \cup V_{n+2}$
is contained in $E$,
since any such disk projects by $\varpi$
to a disk in $S$.
This shows that
the isomorphism $L_{n+2} \simto V_{n+2}$ in $\scF(E)$ extends
to an isomorphism in $\scF(\Mbar_0)$,
and the following proposition is proved:

\begin{proposition} \label{prop:sheridan_vc1}
$L_{n+2}$ and $V_{n+2}$ are isomorphic in $\scF(\Mbar_0)$.
\end{proposition}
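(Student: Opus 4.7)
The plan is to exploit the Lefschetz fibration structure $\varpi_E : E \to S$ together with Seidel's symplectic Picard--Lefschetz theory, and reduce the identification of $L_{n+2}$ and $V_{n+2}$ to a purely categorical statement inside the Fukaya category of the fibration. Concretely, I would first fix the distinguished set of vanishing paths $(\gamma_1,\ldots,\gamma_{n+2})$ emanating from a large real base point as in Figure \ref{fg:xplane_vp1}, and let $(\Deltatilde_1,\ldots,\Deltatilde_{n+2})$ denote the corresponding type (B) Lagrangians in the branched double cover $\Etilde \to E$. By Seidel's theorem this is a full exceptional collection for $\Fuk(\varpi_E)$, and the cohomologically full and faithful functor $\Fuk(E) \hookrightarrow \Fuk(\varpi_E)$ lands in the triangulated subcategory it generates, so it suffices to locate both $L_{n+2}$ and $V_{n+2}$ with respect to this collection.

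The first key step is to compute $\Hom^*_{\Fuk(\varpi_E)}(\Deltatilde_i, L_{n+2})$ for $2 \le i \le n+1$ and show it vanishes. Here I would use the coamoeba picture: by Lemma~\ref{lem:non-intrsection}, the thimble $U_\zeta$ over the ray $\ell(t) = (t+1)\zeta$ is disjoint from $L_{n+2}$ for every value of $\zeta$ except $\arg\zeta = \pm\frac{n+1}{n+2}\pi$. After isotoping the vanishing paths to the configuration $(\gamma_1',\ldots,\gamma_{n+2}')$ in Figure \ref{fg:xplane_vp2} whose straight portions coincide with these rays, the type (U) Lagrangian associated to $L_{n+2}$ does not intersect $\Deltatilde_i'$ for the middle indices, and Hamiltonian isotopy invariance of Floer cohomology closes the argument. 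This places $L_{n+2}$ in the subcategory $\langle \Deltatilde_1, \Deltatilde_{n+2}\rangle$, and since $L_{n+2}$ is exact with $\Hom^*(L_{n+2},L_{n+2})\cong H^*(S^n;\bC)$, Seidel's lemma \cite[Lemma 7]{Seidel_GKQ} forces $L_{n+2}$ to be the cone of a nonzero morphism $\Deltatilde_1 \to \Deltatilde_{n+2}$, up to shift.

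The next step is to identify $V_{n+2}$ in the same terms. By \cite[Section 5]{Futaki-Ueda_Pn}, $V_{n+2}$ is isomorphic in $\Fuk(\varpi_E)$ to the matching cycle for the matching path $\mu_{n+2}$ shown in Figure \ref{fg:Pn_mp}, which visits only the critical values $1$ and $n+2$. Since $\mu_{n+2}$ can be isotoped to avoid the other vanishing paths, $V_{n+2}$ is likewise orthogonal to $\Deltatilde_2,\ldots,\Deltatilde_{n+1}$ and is the mapping cone of a nonzero element of $\Hom^0(\Deltatilde_1,\Deltatilde_{n+2}) \cong \bC$. Consequently, after adjusting the grading of $L_{n+2}$, the two objects represent the same cone and are isomorphic in $\Fuk(E)$.

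The final step, and the one I expect to require the most care, is upgrading the isomorphism $L_{n+2} \simeq V_{n+2}$ from $\Fuk(E)$ to $\Fuk(\Mbar_0)$. The point is that the $A_\infty$ operations defining the isomorphism are assembled from moduli of holomorphic disks in $\Mbar_0$ bounded by $L_{n+2}$ and $V_{n+2}$, whereas the isomorphism constructed inside $\Fuk(E)$ only uses disks contained in $E$. I would justify the extension by the open mapping principle applied to $\varpi$: any holomorphic disk in $\Mbar_0$ with boundary on $L_{n+2} \cup V_{n+2}$ projects by $\varpi$ to a holomorphic disk in $\bCx$ whose boundary lies in $S = \bCx \setminus (-\infty,0)$, hence the image is contained in $S$ and the disk itself is contained in $E$. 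This ensures that the relevant Floer-theoretic structure maps computed in $\Fuk(E)$ agree with those in $\Fuk(\Mbar_0)$, and the isomorphism lifts as desired.
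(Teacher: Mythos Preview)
Your proposal is correct and follows the paper's proof essentially step for step: the same Lefschetz fibration $\varpi_E$, the same orthogonality argument via Lemma~\ref{lem:non-intrsection} and isotoped vanishing paths, the same application of Seidel's cone lemma to both $L_{n+2}$ and the matching cycle $V_{n+2}$, and the same projection argument to pass from $\scF(E)$ to $\scF(\Mbar_0)$. The only cosmetic difference is that you spell out the open-mapping justification for the last step, whereas the paper states it in one line.
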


Proposition \ref{prop:sheridan_vc} follows
from Proposition \ref{prop:sheridan_vc1}
by the $\Gamma_{n+2}^*$-action,
which is simply transitive
on both $\{ V_i \}_{i=1}^{n+2}$ and $\{ L_i \}_{i=1}^{n+2}$.

\begin{remark} \label{rm:compatibility}

Let $\dirFuk$ be the directed subcategory
of $\Fuk(M_0)$
consisting of the distinguished basis
$(\Vtilde_i)_{i=1}^{N}$
of vanishing cycles
of the exact Lefschetz fibration
$\pi_M : M \to \bC$;
$$
 \hom_{\dirFuk}(\Vtilde_i, \Vtilde_j) =
\begin{cases}
 \bC \cdot \id_{\Vtilde_i} & i = j, \\
 \hom_{\Fuk(M_0)}(\Vtilde_i, \Vtilde_j) & i < j, \\
 0 & \text{otherwise}.
\end{cases}
$$
It is also isomorphic
to the directed subcategory of $\Fuk(X_0)$,
since the compositions $\frakm_2$ are the same
on $\Fuk(M_0)$ and $\Fuk(X_0)$,
and higher $A_\infty$-operations $\frakm_k$
for $k \ge 3$ vanish
on the directed subcategories.
Symplectic Picard-Lefschetz theory
developed by Seidel
\cite[Theorem 18.24]{Seidel_PL}
gives an equivalence
$$
 D^b \dirFuk \cong D^b \Fuk(\pi_M)
$$
with the Fukaya category of the Lefschetz fibration $\pi_M$.
This provides a commutative diagram
$$
\begin{array}{ccc}
 \dirFuk & \hookrightarrow & \scF_q \\
 \rotatebox{90}{$\cong$} & & \rotatebox{90}{$\cong$} \\
 \dirC_{n+2} \rtimes \Gamma & \hookrightarrow & \psi^* \scS_q
\end{array}
$$
of $A_\infty$-categories,
where horizontal arrows are embeddings
of directed subcategories.
Combined with the equivalences
$D^b \dirFuk \cong D^b \Fuk(\pi_M)$,
$D^\pi (\scF_q \otimes_{\Lambda_\bN} \Lambda_\bQ)
 \cong D^\pi \Fuk(X_0)$,
$D^b (\dirC_{n+2} \rtimes \Gamma)
 \cong D^b \coh [\bP_\bC^n/\Gamma]$
and
$D^\pi (\scS_q \otimes_{\Lambda_\bN} \Lambda_\bQ))
 \cong D^b \coh Z_q^*$,
this gives the compatibility
of homological mirror symmetry
$$
 D^b \Fuk(\pi_M) \cong D^b \coh [\bP_\bC^n/\Gamma]
$$
for the ambient space
and homological mirror symmetry
$$
 D^\pi \Fuk(X_0) \cong \psihat^* D^b \coh Z_q^*
$$
for its Calabi-Yau hypersurface.
\end{remark}


\bibliographystyle{amsalpha}
\bibliography{bibs}

\noindent
Yuichi Nohara

Faculty of Education,
Kagawa University,
1-1 Saiwai-cho,
Takamatsu,
Kagawa,
760-8522,
Japan.

{\em e-mail address}\ : \  nohara@ed.kagawa-u.ac.jp
\ \vspace{0mm} \\

\noindent
Kazushi Ueda

Department of Mathematics,
Graduate School of Science,
Osaka University,
Machikaneyama 1-1,
Toyonaka,
Osaka,
560-0043,
Japan.

{\em e-mail address}\ : \  kazushi@math.sci.osaka-u.ac.jp
\ \vspace{0mm} \\

\end{document}